\documentclass[final,times,authoryear,3p]{elsarticle}

\usepackage{amsmath,amsfonts,amssymb}
\usepackage{amsthm}
\usepackage{graphicx}
\usepackage{multirow}
\usepackage{color}
\usepackage{framed}
\usepackage{algpseudocode}

\usepackage{fancyhdr}
\pagestyle{fancy}
\chead{\textcolor{blue}{ARTICLE LINK:  http://www.sciencedirect.com/science/article/pii/S0191261515001551
\\  PLEASE CITE THIS ARTICLE AS\\ 
Han, K., Friesz, T.L., Szeto, W.Y., Liu, H., 2015. Elastic demand dynamic network user equilibrium: Formulation, existence and computation. Transportation Research Part B, 81, 183-209.}}
\lhead{}
\rhead{}

\newtheorem{theorem}{Theorem}[section]
\newtheorem{lemma}[theorem]{Lemma}
\newtheorem{proposition}[theorem]{Proposition}

\newtheorem{definition}[theorem]{Definition}
\newtheorem{remark}[theorem]{Remark}




\usepackage{amssymb}





\journal{Transportation Research Part B}

\begin{document}

\begin{frontmatter}



 \begin{center}
\textcolor{blue}{ARTICLE LINK:  http://www.sciencedirect.com/science/article/pii/S0191261515001551
\\  PLEASE CITE THIS ARTICLE AS\\ 
Han, K., Friesz, T.L., Szeto, W.Y., Liu, H., 2015. Elastic demand dynamic network user equilibrium: Formulation, existence and computation. Transportation Research Part B, 81, 183-209.}
 \line(1,0){469}
 \end{center}

\title{Elastic demand dynamic network user equilibrium: Formulation, existence and computation}


\author[ic]{Ke Han}
\ead{k.han@imperial.ac.uk}

\author[ie]{Terry L. Friesz  \corref{cor}} 
\ead{tfriesz@psu.edu}

\author[hku]{W. Y. Szeto}
\ead{ceszeto@hku.hk}

\author[ie]{Hongcheng Liu}
\ead{hql5143@gmail.com}

\cortext[cor]{Corresponding author}

\address[ic]{Department of Civil and Environmental Engineering, Imperial College London, United Kingdom.}
\address[ie]{Department of Industrial and Manufacturing Engineering, Pennsylvania State University, USA.}
\address[hku]{Department of Civil Engineering, the University of Hong Kong, China.}

\begin{abstract}
This paper is concerned with dynamic user equilibrium with elastic travel demand (E-DUE) when the trip demand matrix is determined endogenously. We present an infinite-dimensional variational inequality (VI) formulation that is equivalent to the conditions defining a continuous-time E-DUE problem. An existence result for this VI is established by applying a fixed-point existence theorem \citep{Browder} in an extended Hilbert space. We present three algorithms based on the aforementioned VI and its re-expression as a {\it differential variational inequality} (DVI): a projection method, a self-adaptive projection method, and a proximal point method. Rigorous convergence results are provided for these methods, which rely on increasingly relaxed notions of generalized monotonicity, namely {\it mixed strongly-weakly monotonicity} for the projection method; {\it pseudomonotonicity} for the self-adaptive projection method, and {\it quasimonotonicity} for the proximal point method. These three algorithms are tested and their solution quality, convergence, and computational efficiency compared. Our convergence results, which transcend the transportation applications studied here, apply to a broad family of infinite-dimensional VIs and DVIs, and are the weakest reported to date. 
\end{abstract}

\begin{keyword}
dynamic user equilibrium \sep elastic demand \sep  variational inequality \sep existence \sep computation \sep convergence
  
\end{keyword}

\end{frontmatter}

\section{\label{Intro}Introductory remarks}

This paper is concerned with an extension of the \textit{simultaneous
route-and-departure-time} (SRDT) dynamic user equilibrium (DUE) originally
proposed in \cite{Friesz1993} and discussed subsequently by \cite{FBST,
FHNMY, FKKR, FM2014}; and \cite{FM2006}. Specifically, the model of interest
herein relaxes the assumption of fixed trip volumes by considering elastic
travel demands among origin-destination pairs. The extension of DUE based on
a fixed trip table to the explicit consideration of elastic demand is not a
straightforward matter. In particular, one has to show a dual variable
associated with arrival time is equivalent to an adjoint (co-state) variable
by exploiting the transversality conditions familiar from differential
variational inequality (DVI) theory. The first such analysis was performed
by \cite{FM2014}, who employed separable demand functions for
each origin-destination pair. They used a variational calculus approach,
which, although correct, masks many of the measure-theoretic arguments
essential to understanding the generality of a DVI representation of the
elastic-demand DUE (E-DUE) problem in continuous time. By contrast, this paper not
only considers nonseparable demand functions, but it also provides all
measure-theoretic arguments needed to understand the DVI formulation.
Furthermore, this paper presents an existence theory, three algorithms and their proofs of convergence, and numerical studies, all of which are missing from \cite{FM2014}. 
That is to say, this paper provides the first complete
mathematical and numerical analysis of the SRDT E-DUE problem.

\subsection{Dynamic user equilibrium with elastic demand: Some review}

Most of the studies of DUE reported in the \textit{dynamic traffic assignment
} (DTA) literature are about dynamic user equilibrium with constant travel
demand for each origin-destination pair. It is, of course, not generally
true that travel demand is fixed, even for short time horizons. \cite{ADL}
and \cite{YH} directly consider elastic travel demand in the context of a
single bottleneck. \cite{YM} extend a simple bottleneck model to a general
queuing network with known elastic demand functions for each
origin-destination (OD) pair. They employ a \textit{space-time expanded
network} (STEN) representation of the network loading submodel. \cite{WTC}
study a version of the dynamic user equilibrium with elastic demand, using a
complementarity formulation that requires path delays to be expressible in
closed form. \cite{SL} study dynamic user equilibrium with elastic travel
demand when network loading is based on the \textit{cell transmission model}
(CTM); their formulation is discrete-time in nature and is expressed as a
finite-dimensional variational inequality (VI). The VI is solved with a
descent method under the assumption that the delay operator is co-coercive. \cite{HUD} study dynamic user
equilibrium with elastic travel demand for a network with a single
origin-destination pair whose traffic flow dynamics are also described by
CTM; the CTM is chosen to accommodate the discrete-time complementarity
formulation of the user equilibrium model.

Although \cite{FKKR} show that analysis and computation of dynamic user
equilibrium with constant travel demand is tremendously simplified by
stating it as a differential variational inequality (DVI), they do not
discuss how elastic demand may be accommodated within a DVI framework. \cite
{FM2014} later extend the DVI formulation to an elastic demand setting,
although that paper does not discuss the existence and the computation of
E-DUE, which are our main focus here. Such a DVI formulation for the E-DUE
problem is not a straightforward extension. In particular, the DVI presented
therein has both infinite-dimensional and finite-dimensional terms.
Moreover, for any given origin-destination pair, inverse travel demand
corresponding to a dynamic user equilibrium depends on the terminal value of
a state variable representing cumulative departures. The DVI formulation
achieved in that paper is significant because it allows the still emerging
theory of differential variational inequalities to be employed for the
analysis and computation of solutions of the elastic-demand DUE problem when
simultaneous departure time and route choices are within the purview of
users, all of which constitutes a foundation problem within the field of
dynamic traffic assignment.

A good review of recent insights into abstract differential variational
inequality theory, including computational methods for solving such
problems, is provided by \cite{PS}. Also, differential variational
inequalities involving the kind of explicit, agent-specific control
variables employed herein are presented in \cite{DODG}.

\subsection{Discussion of contributions made in this paper}

In this paper, we present a unified theory and a general framework for
formulating, analyzing, and computing the simultaneous route-and-departure-time (SRDT) dynamic user equilibrium with elastic demand (E-DUE). Such an analytic framework is meant to allow qualitative analyses on E-DUE to be conducted in a rigorous manner, and to accommodate any dynamic network loading model expressible by an embedded effective delay operator.

We show, using measure-theoretic argument, that a general SRDT E-DUE can be
cast as an infinite-dimensional variational inequality problem. Unlike existing VI formulations in the literature, this VI is defined on an extended Hilbert space, which facilitates the analysis regarding existence and computation.  As a result, the existence of E-DUE is formally established in the most general setting; that is, it incorporates both route and departure time choices of travelers, and does so without invoking the  \textit{a priori} boundedness of path departure rates \footnote{We refer the reader to \cite{existence} for an illustration of the subtlety of the {\it  a priori} bound on the path departure rates when it comes to the SRDT notion of DUEs.}. 

This paper also makes a significant contribution to the computation of SRDT
E-DUE, by proposing three different algorithms and analyzing their convergence conditions. These are achieved through the VI formulation of the E-DUE model. Regarding algorithms and computation, our paper's intent is to: (i) document how far the available mathematics can take us in assuring convergence, and (ii) illustrate what can be done computationally when proceeding heuristically by relaxing monotonicity assumptions needed to assure convergence. 

In the following sections we will discuss the existence and computation of E-DUE problems in detail, while referring to the work presented in this paper and by  other scholars.

\subsubsection{Existence of SRDT E-DUE}
As commented by \cite{existence}, the most obvious approach to establishing
existence is to convert the problem to an equivalent variational inequality
problem or a fixed-point problem and then apply a version of Brouwer's famous fixed-point existence theorem. Nearly all proofs of DUE existence employ such an existence theorem, either implicitly or explicitly. One statement of Brouwer's theorem appears
as Theorem 2 of \cite{Browder}. Approaches based on Brouwer's theorem
require the set of feasible path departure rates (path flows) to be compact and
convex in a topological vector space, and typically involve the \textit{a
priori} bound on path departure rates. For instance, using the \textit{link delay model
} introduced by \cite{Friesz1993}, \cite{ZM} show that a \textit{route choice
} (RC) dynamic user equilibrium exists under certain regularity conditions.
In their modeling framework, the departure rate at each origin is given as 
\textit{a priori} and assumed to be bounded from above. Thus one is assured
that all path departure rates are automatically uniformly bounded. \cite{Mounce2007} uses Schauder's theorem to prove existence for the bottleneck model. \cite{SW1995} prove existence of a route-choice equilibrium when the path delay operator is continuous. In \cite{WTC}, the existence of a link-based dynamic user equilibrium is established under the assumption that the path departure rates are \textit{a priori} bounded.

Difficulties arise in the proof of a general existence theorem from two
aspects: (i) in a continuous-time setting, the set of feasible path departure rates is
often not compact; and (ii) the assumption of \textit{a priori} boundedness
of path departure rates, which is usually required by a topological argument, does not
arise from any behavioral argument or theory. The existence proof provided
by this paper manages to overcome these two major difficulties. Regarding
item (i) above, we employ successive finite-dimensional approximations of
the set consisting of feasible path departure rates, which allows Brouwer's fixed-point theorem to be applied. Regarding item (ii), we propose an in-depth analysis and
computation involving the path departure rates under minor assumptions on the travelers' disutility functions.

Existence result for the elastic demand case is further complicated by the
fact that the corresponding VI formulation has both infinite-dimensional and
finite-dimensional terms (see Theorem \ref{vielasticthm} below). In order to apply
Browder's theorem \citep{Browder}, one needs to work in an extended Hilbert
space that is a product of an infinite-dimensional space and a
finite-dimensional space, and define appropriate inner product that allows
compactness and weak topology to be properly defined. It is significant that
our existence result for E-DUE, stated and proven in Theorem \ref
{existencethm}, does not rely on the \textit{a priori} upper bound of path
flows and can be established for any dynamic network loading sub-model with
reasonable and weak regularity conditions.

\subsubsection{Computation of SRDT E-DUE solutions}\label{secintrocomp}

The computation of the DUE problem and its elastic-demand extension is facilitated by their equivalent mathematical formulations, such as VI, complementarity problems, fixed-point problems, and mathematical programming problems, through existing and emerging computational algorithms associated therein. The convergence of an algorithm is highly related to the property of the {\it path delay operator}, which is obtained through the dynamic network loading subproblem. In particular, most convergence proofs rest on certain types of continuity and generalized monotonicity.

 For example, \cite{JRC} develop a projection-based method to solve a {\it route-choice} DUE problem, which is a special case of the SRDT DUE problem; the convergence of this method requires continuity and strict monotonicity of the path delay operator. Similarly, a fixed-point method developed by \cite{FKKR} for SRDT DUEs relies on Lipschitz continuity and strong monotonicity. Strong monotonicity is known to not hold for general networks and DNL models \citep{MS2007}, and algorithms that rely on relaxed notions of monotonicity have been proposed in the literature. \cite{LS2002a} and \cite{SL} develop an alternating direction method and a descent method for cell-based DUE problems with fixed and elastic demand, respectively; both methods require that the delay operator is co-coercive. According to \cite{ZH}, a sufficient condition for co-coerciveness includes Lipschitz continuity and monotonicity (rather than strong monotonicity). Algorithms with even more relaxed convergence requirements, such as the day-to-day route swapping algorithm \citep{HL, SL2006, THG} and the extragradient method \citep{LHGS}, are also proposed for solving DUE problems. In particular, convergence of the route swapping algorithm requires continuity and monotonicity \citep{MC2011}; and the extragradient method requires Lipschitz continuity and pseudo monotonicity for convergence.

This paper provides three new algorithms for computing E-DUE problems with convergence proofs that extend the knowledge of mathematical tools available for ensuring convergence. The first algorithm is a {\it projection} algorithm, which is based on  the new VI formulation proposed in this paper. It relies on a minimum-norm projection operator onto a subset of an extended Hilbert space. Notably, this new projection operator may be explicitly represented by invoking the mathematical paradigm of differential variational inequality (DVI) and linear-quadratic optimal control, which is original in the literature. Regarding convergence of the projection method, instead of relying on the well-known strong monotonicity, we propose a new condition called {\it mixed strongly-weakly monotonicity} (MSWM). The MSWM stipulates strong monotonicity along a subset of paths, and weak monotonicity along the rest of the paths; it is one type of component-wise monotonicity, and has been extensively studied in the literature of dynamical systems and optimization. This paper provides the first convergence result for the MSWM property. The second algorithm is a {\it self-adaptive projection method}, which is originally proposed by \cite{HL2002} for solving generic variational inequalities. This method converges given that the delay operator is continuous and pseudo monotone, which is a weaker notion than (strong) monotonicity. Similar to the first computational method, this self-adaptive projection method utilizes the aforementioned explicit instantiation of the projection onto the extended Hilbert space. Finally, the third algorithm is called {\it proximal point method} \citep{Konnov}. It replaces the original VI problem, which may not satisfy the require monotonicity, with a sequence of regularized problems, each of which may be solved with standard algorithms due to improved regularity (monotonicity).  In this paper we provide a new convergence proof for the proximal point method for a class of {\it quasi monotone} delay operators, and further relax the monotonicity condition compared to the existing literature. The three new algorithms are tested on two networks, which numerically demonstrates their convergence, solution quality, and computational complexity.

The key contributions made in this paper include:

\begin{enumerate}
\item[-] the expression of the simultaneous route-and-departure-time (SRDT)
E-DUE problem as an infinite-dimensional variational inequality in an extended Hilbert space;

\item[-] a general existence result for the E-DUE when both departure time and route choices are within the purview of travelers, which does not invoke the {\it a priori} boundedness on the path departure rates;

\item[-] three new algorithms for computing E-DUE problems with rigorous convergence proofs that rely on increasingly relaxed monotonicity conditions on the delay operator.

\end{enumerate}

The rest of this paper is organized as follows. Section \ref{secNEB} offers an introduction to some key concepts, notations, and mathematical background needed to present and analyze the E-DUE problem. Section \ref{seceduedefvi} presents the variational inequality formulation of the E-DUE problem in an extended Hilbert space. We provide an existence theory for the E-DUE problem in Section \ref{secedueexistence}. Section \ref{seccomputation} presents the three computational methods and their convergence results. Section \ref{secnum} presents a numerical study of the proposed computational methods. Finally, some concluding remarks are offered in Section \ref{secconclusion}.

\section{Notation and essential background}\label{secNEB}

Throughout his paper, the time interval of analysis is a single commuting period expressed as $[t_{0},\, t_{f}] \subset \mathbb{R}$
where $t_{f}>t_{0}$, and both $t_{0}$ and $t_{f}$ are fixed. We let $\mathcal{P}$ be the set of all paths utilized by travelers. For each $p\in\mathcal{P}$, we define the path departure rate (in vehicle per unit time), which is a function of departure time $t\in[t_0,\,t_f]$, $h_p(\cdot):~[t_0,\,t_f] \to\mathbb{R}_+$, where $\mathbb{R}_+$ denotes the set of non-negative real numbers. Each path departure rate $h_p(t)$ is interpreted as a path flow measured at the entrance of the first link of the relevant path. We next define $h(\cdot)=\{h_p(\cdot)\,:p\in\mathcal{P}\}$ to be a vector of departure rates, which is viewed as a vector-valued function of $t$, the departure time \footnote{For notation convenience and without causing any confusion, we will sometimes use $h$ instead of $h(\cdot)$ to denote the path departure rate vectors.}. 

We let $L^2[t_0,\,t_f]$ be the space of square-integrable functions define on the interval $[t_0,\,t_f]$, and $L^2_+[t_0,\,t_f]$ be its subset consisting of non-negative functions. It is stipulated that each path departure rate is square integrable: $h_p(\cdot)\in L^2_+[t_0,\,t_f]$ and $h(\cdot)\in (L^2_+[t_0,\,t_f])^{|\mathcal{P}|}$, where  $\big(L^2[t_0,\,t_f]\big)^{|\mathcal{P}|}$ is the $|\mathcal{P}|$-fold product of the Hilbert space $L^2[t_0,\,t_f]$, and $\big(L_+^2[t_0,\,t_f]\big)^{|\mathcal{P}|}$ is its subset consisting of non-negative path departure vectors. The inner product on the Hilbert space $\big(L^2[t_0,\,t_f]\big)^{|\mathcal{P}|}$ is defined as
\begin{equation}\label{ipdef}
\left<h^1,\, h^2\right>~=~\int_{t_0}^{t_f}\left(h^1(t)\right)^T\,h^2(t)\,dt~=~\sum_{p\in\mathcal{P}}\int_{t_0}^{t_f} h^1_p(t) \cdot h^2_p(t)\,dt
\end{equation}
where the superscript $T$ denotes the transpose of vectors. Moreover, the norm 
\begin{equation}\label{l2normdef}
\left\|u\right\|_{L^2}~=~\left<u,\,u\right>^{1/2}
\end{equation}
\noindent is induced by the inner product \eqref{ipdef}.

Here, as in all DUE modeling, the single most crucial ingredient is the path delay operator, which maps a given vector of departure rates $h$ to a vector of path travel times. More precisely, we let
\begin{equation*}
D_{p}(t,\,h)\qquad \forall t\in[t_0,\,t_f],\quad  \forall p\in \mathcal{P}
\end{equation*}
be the path travel time of a driver departing at time $t$ and following path $p$, given the departure rates associated with all the paths in the network, which is expressed by $h$ in the expression above. We then define the path delay operator $D(\cdot)$ by letting $D(h)=\{D_p(\cdot,\,h):\,p\in\mathcal{P}\}$, which is a vector of time-dependent path travel times $D_p(t,\,h)$.  $D(\cdot)$ is an operator defined on $\big(L_+^2[t_0,\,t_f]\big)^{|\mathcal{P}|}$, and maps a vector valued function $h(\cdot)$ to another vector-valued function $\{D_p(\cdot,\,h):\,p\in\mathcal{P}\}$. In summary,
\begin{equation}\label{Ddef}
D:~\big(L^2_+[t_0,\,t_f]\big)^{|\mathcal{P}|}~\rightarrow~\big(L^2_+[t_0,\,t_f]\big)^{|\mathcal{P}|},\qquad h(\cdot)=\{h_p(\cdot),~p\in\mathcal{P}\}~\mapsto~D(h)=\{D_p(\cdot,\,h),~p\in\mathcal{P}\}
\end{equation}
\noindent The {\it effective} path delay operator $\Psi$ is similarly defined, except that the effective path delay contains, in addition to path travel time, also arrival penalties. As such, the effective path delay is a more general notion of ``travel cost" than path delay. The effective delay operator is defined as follows. 
\begin{equation}\label{rvPsidef}
\Psi:~\big(L^2_+[t_0,\,t_f]\big)^{|\mathcal{P}|}~\rightarrow~\big(L^2_+[t_0,\,t_f]\big)^{|\mathcal{P}|},\qquad h(\cdot)=\{h_p(\cdot),~p\in\mathcal{P}\}~\mapsto~\Psi(h)=\{\Psi_p(\cdot,\,h),~p\in\mathcal{P}\}
\end{equation}
\noindent where
\begin{equation}\label{cost}
\Psi _{p}(t,\,h)~=~D_{p}(t,\,h)+f\big( t+D_{p}(t,h)-T_{A}\big) \qquad \forall t\in[t_0,\,t_f],\quad \forall p\in \mathcal{P}
\end{equation}
where $T_{A}$ is the desired arrival time and $T_{A}<t_{f}$. The term $f\big( t+D_{p}(t,h)-T_{A}\big)$ assesses a nonnegative penalty whenever
\begin{equation}
t+D_{p}(t,\,h)~\neq~T_{A}  \label{pgt}
\end{equation}
since $t+D_{p}(t,\,h)$ is the clock time at which departing traffic arrives at
the destination of path $p\in \mathcal{P}$. Note that, for convenience here, $T_A$ is assumed to be independent of path or origin-destination pair. However, that assumption is easy to relax, and the consequent generalization of our model is a trivial extension.

We interpret $\Psi_p(t,\,h)$ as the perceived travel cost of drivers departing at time $t$ following path $p$ given the vector of path departure rates $h$.  We stipulate that for all $p\in\mathcal{P}$, the function $\Psi _{p}(\cdot,\,h)$ is measurable, almost everywhere positive, and square integrable. The notation
\begin{equation*}
\Psi(h)~\doteq~\{\Psi _{p}(\cdot ,h):~p\in \mathcal{P}\} \in \big(L_+^2[t_0,\,t_f]\big)^{|\mathcal{P}|}
\end{equation*}
is used to express the complete vector of effective path delays.

 In order to develop an appropriate notion of minimum travel cost in the measure-theoretic context, we require the concept of {\it essential infimum}. In particular,  for  any measurable function $g: ~[t_0,\,t_f]\rightarrow \mathbb{R}$, the essential infimum of $g(\cdot)$ on $[t_0,\,t_f]$ is given by
\begin{equation}\label{chapVI:essinf}
\hbox{essinf}\left\{g(s):~s\in [t_0,\,t_f]\right\}~\doteq~\sup\left\{ x\in\mathbb{R}:~meas\{s\in [t_0,\,t_f]:~g(s)<x\}~=~0\right\}
\end{equation}
\noindent where $meas$ represents the (Lebesgue) measure. Note that for each $x>\hbox{essinf}\{g(s):~s\in [t_0,\,t_f]\}$ it must be true by definition that 
$$
meas\{s\in [t_0,\,t_f]:~f(s)<x\}>0
$$

\noindent Let us  define the essential infimum of the effective path delay, which depend on the path departure rate vector $h$,
\begin{equation}\label{essinfdef1}
v_p(h)~=~\hbox{essinf}\left\{\Psi_p(t,\,h):~t\in[t_0,\,t_f]\right\}~>~0\qquad\forall p\in\mathcal{P}
\end{equation}
\noindent The minimum travel cost within a given O-D pair $(i,\,j)$ is thus defined as
\begin{equation}\label{essinfdef2}
v_{ij}(h)~=~\min\left\{v_p(h): ~p\in \mathcal{P}_{ij}\right\}~>~0 \qquad \forall \left( i,j\right) \in \mathcal{W}
\end{equation}
\noindent By definition, $v_{ij}(h)$ is the minimum travel cost within O-D pair $(i,\,j)$ for all route choices and departure time choices.

The (effective) path delay operator is a key component of analytical dynamic user equilibrium (DUE) models; it is usually not available in closed form and has to be numerically evaluated from dynamic network loading (DNL), which is a sub-problem of a complete DUE model. The DNL sub-problem aims at describing and predicting the spatial-temporal evolution of traffic flows on a network that is consistent with established route and departure time choices of travelers, by introducing appropriate dynamics to flow propagation, flow conservation, and travel delays on a network level.  Any DNL must be consistent with the established path departure rates and link flow model; and it is usually performed under the {\it first-in-first-out} (FIFO) rule. A few link flow models commonly employed for the DNL procedure include the link delay model \citep{Friesz1993}, the Vickrey model \citep{Vickrey, GVM1, GVM2}, the cell transmission model \citep{CTM1, CTM2}, the link transmission model \citep{LTM, LKWM}, and the Lighthill-Whitham-Richards model \citep{LW, Richards}.

Work regarding the dynamic network loading models dates back to the 1990s with a significant number of publications \citep{FKKR, FHLY, FHNMY, Handissertation,   LS, NZ2010, Szeto2003, Szeto2011, SL, SL2006} and \cite{Ukkusuri}. Notably, despite the absence of closed-form representations of the delay operators, it has been reported that certain dynamic network loading models can be expressed as a system of {\it differential algebraic equations} (DAEs) or {\it partial differential algebraic equations} (PDAEs). Those results include: the DAE system formulation of the DNL procedure for the link delay model \citep{FKKR}; the DAE system formulation of the DNL procedure for the Vickrey model \citep{Handissertation}; the DAE system formulation of the DNL procedure for the LWR-Lax model \citep{FHNMY}; and the PDAE/DAE system formulation of the general LWR model \citep{spillback}.

\section{Definition of SRDT E-DUE and the variational inequality formulation}\label{seceduedefvi}

\subsection{Dynamic user equilibrium with elastic demand}

We introduce the trip matrix $\big(Q_{ij}: (i,\,j)\in\mathcal{W}\big)$, where each $Q_{ij}\in \mathbb{R} _{+}$ is the (elastic) travel demand between the origin-destination (O-D) pair $\left( i,j\right) \in \mathcal{
W}$, and $\mathcal{W}$ is the set of origin-destination pairs. Note that unlike route-choice DUE models, $Q_{ij}$ here represents total traffic volume, instead of flow that changes over time.   The flow conservation constraints read
\begin{equation}\label{cons}
\sum_{p\in \mathcal{P}_{ij}}\int_{t_0}^{t_f}h_p(t)\,dt~=~Q_{ij}\qquad\forall (i,\,j)\in\mathcal{W}
\end{equation}
\noindent where \eqref{cons} consists of Lebesgue integrals, and $\mathcal{P}_{ij}\subset \mathcal{P}$ is the set of paths connecting O-D pair $(i,\,j) \in \mathcal{W}$.  In the elastic demand case, the travel demand between O-D pair $(i,\,j)$ is assumed to be expressed as the following invertible function
$$
Q_{ij}~=~G_{ij}[v]\qquad\forall (i,\,j)\in\mathcal{W}
$$
\noindent where  $v=\{v_{ij}(h):~(i,\,j)\in\mathcal{W}\}$ is the vector of O-D specific minimum travel costs $v_{ij}(h)$ defined in \eqref{essinfdef1}-\eqref{essinfdef2}. We note that $Q_{ij} $ is the \underline{unknown} travel demand
between $(i,\,j)$ that must ultimately be achieved by the end of the time horizon $t=t_{f}$. We will find it convenient to form the complete vector of travel
demands by concatenating the O-D specific travel demands to obtain
$$
Q ~=~\big( Q_{ij} :~( i,\, j) \in \mathcal{W}\big)~=~\big(G_{ij}[v] :~( i,\, j) \in \mathcal{W}\big) \in \mathbb{R}_+^{\left\vert \mathcal{W}\right\vert } 
$$
which defines a mapping from $v$ to $Q$ that, when invertible, gives rise to the {\it inverse demand function}:
$$
\Theta: \mathbb{R}_+^{|\mathcal{W}|}~\to~  \mathbb{R}_{++}^{|\mathcal{W}|},\qquad Q~\mapsto~v
$$
\noindent where
\begin{equation}\label{inversedemandfcn}
v~=~\big(v_{ij}:~(i,\,j)\in\mathcal{W}\big)\quad\hbox{and}\quad v_{ij}~=~\Theta_{ij}[Q]
\end{equation}
\noindent Notice that the inverse demand function defined in \eqref{inversedemandfcn} is non-separable in the sense that each minimum O-D travel cost $v_{ij}$ is jointly determined by the entire vector of elastic demands $Q=\big(Q_{ij}:\,(i,\,j)\in\mathcal{W}\big)$.

 Accordingly, we employ the following feasible set of departure flows when the travel demand between each origin-destination pair is endogenous. 
\begin{equation}\label{tildelambdadef}
\widetilde\Lambda~=~\left\{(h,\,Q):~h\geq 0,~~ \sum_{p\in\mathcal{P}_{ij}}\int_{t_0}^{t_f} h_p(t)\,dt~=~Q_{ij} \quad \forall (i,\,j)\in\mathcal{W}\right\}
\subset\left(L^2[t_0,\,t_f]\right)^{|\mathcal{P}|}\times \mathbb{R}^{|\mathcal{W}|}
\end{equation}
\noindent where $\Big(L^2[t_0,\,t_f]\Big)^{|\mathcal{P}|}\times \mathbb{R}^{|\mathcal{W}|}$ is the direct product  of the $|\mathcal{P}|$-fold product of Hilbert spaces, and the $|\mathcal{W}|$-dimensional Euclidean space consisting of vectors of elastic travel demands. 

With preceding preparation, we are in a place where the simultaneous route-and-departure-time dynamic user equilibrium with elastic demand can be rigorously defined, as follows.

\begin{definition}\label{dueelasticdef}
{\bf (Dynamic user equilibrium with elastic demand)} A pair $(h^*,\, Q^*)\in \widetilde \Lambda$ is said to be a dynamic user equilibrium with elastic demand if for all $(i,\,j)\in\mathcal{W}$,
\begin{align}\label{chapVI:eqn1}
& h_p^*(t)~>~0,~~p\in\mathcal{P}_{ij}~~\Longrightarrow~~\Psi_p(t,\,h^*)~=~\Theta_{ij}[Q^*]\qquad\hbox{for almost every } t\in[t_0,\,t_f]  \\
\label{chapVI:eqn2}
& \Psi_p(t,\,h^*)~\geq~\Theta_{ij}[Q^*] \qquad\hbox{for almost every } t\in[t_0,\,t_f], \quad   \forall p\in\mathcal{P}_{ij}
\end{align}

\end{definition}

\subsection{The variational inequality formulation of the SRDT E-DUE problem}\label{subseceduevi}

Experience with differential games suggests that the DUE problem with elastic demand can be expressed as a variational inequality, as shown in the theorem below. 

\begin{theorem}\label{vielasticthm} {\bf (E-DUE equivalent to a variational inequality)} Assume $\Psi_p(\cdot,\,h): [t_0,\,t_f]\rightarrow \mathbb{R}_{++}$ is positive and measurable for all $p\in\mathcal{P}$ and all $h$ such that $(h,\,Q)\in\widetilde \Lambda$. Also assume that the inverse demand function $\Theta_{ij}[\cdot]$ exists for all $(i,\,j)\in \mathcal{W}$. Then a pair $(h^*,\,Q^*)\in\widetilde\Lambda$ is a DUE with elastic demand (Definition \ref{dueelasticdef}) if and only if it solves the following variational inequality:
\begin{equation}\label{chapVI:elasticvi}
\left.\begin{array}{c}
\hbox{find} ~(h^{\ast},\,Q^*)\in \widetilde\Lambda~\hbox{such that}\\
\displaystyle\sum_{p\in \mathcal{P}}\int\nolimits_{t_{0}}^{t_{f}}\Psi _{p}(t,h^*)\big(h_{p}(t)-h_{p}^{\ast}(t)\big)\,dt
-\sum_{(i,\,j) \in \mathcal{W}}\Theta _{ij}\left[ Q^{\ast } \right] (Q_{ij} -Q_{ij}^{*}) ~\geq~0   \\ \forall (h,\,Q)\in \widetilde\Lambda  
\end{array} 
\right\} VI\big(\Psi,\, \Theta,\, [t_{0},\, t_{f}]\big)
\end{equation}
\end{theorem}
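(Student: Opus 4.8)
The plan is to establish the two implications of the equivalence separately. The forward direction (E-DUE $\Rightarrow$ VI) reduces to a direct computation using flow conservation, while the reverse direction (VI $\Rightarrow$ E-DUE) carries all of the measure-theoretic weight and requires a careful construction of feasible perturbations that respect the coupling between $h$ and $Q$ encoded in $\widetilde\Lambda$.

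For the forward direction, I would assume $(h^*,Q^*)$ satisfies \eqref{chapVI:eqn1}--\eqref{chapVI:eqn2}, fix an arbitrary $(h,Q)\in\widetilde\Lambda$, and work one O-D pair at a time. From \eqref{chapVI:eqn2}, $\Psi_p(t,h^*)\ge\Theta_{ij}[Q^*]$ $\nu$-a.e. for $p\in\mathcal P_{ij}$, so multiplying by $h_p\ge 0$, integrating, summing over $p\in\mathcal P_{ij}$ and invoking the flow-conservation constraint in \eqref{tildelambdadef} bounds $\sum_{p\in\mathcal P_{ij}}\int_{t_0}^{t_f}\Psi_p(t,h^*)h_p\,dt$ from below by $\Theta_{ij}[Q^*]\,Q_{ij}$. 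The complementarity condition \eqref{chapVI:eqn1} makes the $h^*$-term exact: on $\{h_p^*>0\}$ one has $\Psi_p(t,h^*)=\Theta_{ij}[Q^*]$, and elsewhere the integrand vanishes, whence $\sum_{p\in\mathcal P_{ij}}\int_{t_0}^{t_f}\Psi_p(t,h^*)h_p^*\,dt=\Theta_{ij}[Q^*]\,Q_{ij}^*$. Subtracting and summing over $(i,j)\in\mathcal W$ yields \eqref{chapVI:elasticvi} directly.

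For the reverse direction I would argue in two steps. First, restrict to variations with $Q=Q^*$, redistributing mass among the paths of a fixed O-D pair while preserving $\sum_{p\in\mathcal P_{ij}}\int h_p\,dt=Q_{ij}^*$; on this slice the finite-dimensional term cancels and \eqref{chapVI:elasticvi} collapses to the fixed-demand DUE variational inequality. The standard measure-theoretic perturbation—moving an $\varepsilon$-mass from a set where some $\Psi_p$ is large onto a set where some $\Psi_{p'}$ sits near its essential infimum—then forces the existence of a common constant $v_{ij}(h^*)$ as in \eqref{essinfdef2} with $\Psi_p(t,h^*)\ge v_{ij}(h^*)$ $\nu$-a.e. and $\Psi_p(t,h^*)=v_{ij}(h^*)$ $\nu$-a.e. on $\{h_p^*>0\}$. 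Second, I would use genuinely elastic variations to identify the inverse demand. Adding a nonnegative bump $\delta$ to a minimum-cost path $p_0\in\mathcal P_{ij}$ and simultaneously raising $Q_{ij}$ by $\int\delta\,dt$ stays in $\widetilde\Lambda$ and gives $\int_{t_0}^{t_f}\Psi_{p_0}(t,h^*)\delta\,dt\ge\Theta_{ij}[Q^*]\int_{t_0}^{t_f}\delta\,dt$; concentrating $\delta$ on $\{t:\Psi_{p_0}(t,h^*)<v_{ij}(h^*)+\eta\}$ (a positive-measure set by the definition \eqref{chapVI:essinf}) and letting $\eta\to 0$ yields $v_{ij}(h^*)\ge\Theta_{ij}[Q^*]$. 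Subtracting a bump supported on $\{h_{p_0}^*>0\}$ while lowering $Q_{ij}$ accordingly yields the reverse inequality, so $\Theta_{ij}[Q^*]=v_{ij}(h^*)$. Substituting this identity into the conditions from the first step recovers exactly \eqref{chapVI:eqn1}--\eqref{chapVI:eqn2}.

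The main obstacle I anticipate is the measure-theoretic bookkeeping in the reverse direction: the perturbation supports must be chosen through the essential infimum, the perturbed flows must be verified to remain in $L^2_+[t_0,t_f]$ and to satisfy flow conservation, and the limits in $\varepsilon$ and $\eta$ must be taken with care. The coupling between $h$ and $Q$ is precisely what distinguishes this from the fixed-demand setting—one cannot perturb $Q$ in isolation, so every demand variation must be paired with a compatible flow variation. Finally, the subtraction step requires a minimum-cost path actually carrying positive mass, so the degenerate case $Q_{ij}^*=0$ (in which all $h_p^*=0$ for $p\in\mathcal P_{ij}$) must be treated separately.
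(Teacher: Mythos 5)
Your proof is correct and follows the same overall architecture as the paper's: necessity by direct computation using flow conservation and the complementarity condition, and sufficiency by first reducing to the fixed-demand DUE on the slice $Q=Q^*$ and then identifying $v_{ij}(h^*)=\Theta_{ij}[Q^*]$ through feasible perturbations of the pair $(h,Q)$. The one genuine difference is the perturbation used in that identification step. The paper tests the VI against the one-parameter family $\hat h_p=a\,h_p^*$ for $p\in\mathcal{P}_{kl}$, $\hat Q_{kl}=a\,Q_{kl}^*$ with $a\in\mathbb{R}_{++}$ arbitrary; since the first step already gives $\sum_{p\in\mathcal{P}_{kl}}\int_{t_0}^{t_f}\Psi_p(t,h^*)h_p^*\,dt=v_{kl}(h^*)\,Q_{kl}^*$ exactly, the VI collapses to $(a-1)\bigl(v_{kl}(h^*)-\Theta_{kl}[Q^*]\bigr)Q_{kl}^*\ge 0$, and letting $a$ range on both sides of $1$ delivers both inequalities at once with no essential-infimum localization. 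Your additive bumps (mass added on a near-infimum set, mass removed from the support of $h_{p_0}^*$) are equally valid but require the extra measure-theoretic bookkeeping you flag, including checking that the downward perturbation keeps $h\ge 0$. They do buy you something, however: the upward bump applied to an arbitrary path $p\in\mathcal{P}_{ij}$ and an arbitrary positive-measure set yields $\Psi_p(t,h^*)\ge\Theta_{ij}[Q^*]$ for $\nu$-almost all $t$ directly, which settles the degenerate case $Q_{ij}^*=0$ --- a case the paper's scaling argument cannot reach (it needs $Q_{kl}^*>0$) and which the paper dispatches only in a footnote asserting that \eqref{chapVI:eqn2} holds ``automatically'' when $h_p^*$ vanishes, an assertion that in fact still needs an argument like your bump construction to justify.
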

\begin{proof}
The proof is postponed until \ref{secappVIthm}.
\end{proof}

In the remainder of this section, we rewrite \eqref{chapVI:elasticvi} in a more compact and generic VI form by introducing an extended Hilbert space. We consider the product space $E\doteq \big(L^2[t_0,\,t_f]\big)^{|\mathcal{P}|}\times \mathbb{R}^{|\mathcal{W}|}$, which is a space with the induced inner product defined as follows 
\begin{equation}\label{inducedipdef}
\begin{array}{c}
\left<X,\,Y\right>_E~\doteq~\displaystyle\sum_{i=1}^{|\mathcal{P}|}\int_{t_0}^{t_f}\xi_i(t)\cdot \eta_i(t)\,dt+\sum_{j=1}^{|\mathcal{W}|}u_jv_j\\
\displaystyle X~=~\left(\xi_1(\cdot),\,\ldots,\,\xi_{|\mathcal{P}|}(\cdot),\, u_1,\,\ldots,\,u_{|\mathcal{W}|}\right)\in E\\
\displaystyle Y~=~\left(\eta_1(\cdot),\,\ldots,\,\eta_{|\mathcal{P}|}(\cdot),\, v_1,\,\ldots,\,v_{|\mathcal{W}|}\right)\in E
\end{array}
\end{equation}
The inner product $\left<\cdot,\,\cdot\right>_E$ immediately leads to the following norm on the space $E$
\begin{equation}
\|X\|_E~=~\left<X~,~X\right>_E^{1/2}
\end{equation}
making $E$ a metric space. In the following proposition, we show that the inner product $\left<\cdot,\,\cdot\right>_E$ and the norm $\|\cdot\|_E$ are well defined, and the resulting extended space $E$ is indeed a Hilbert space. 
\begin{proposition}\label{propE}
The inner product $\left<\cdot,\,\cdot\right>_E$ and norm $\|\cdot\|_E$ are well defined. In addition, the space $E$, equipped with $\left<\cdot,\,\cdot\right>_E$ and the induced metric, is a Hilbert space.
\end{proposition}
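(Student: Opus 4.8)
The plan is to verify the three inner-product axioms together with completeness, treating $E$ as a finite direct product of the Hilbert spaces $L^2[t_0,\,t_f]$ and $\mathbb{R}$. First I would establish that $\langle X,\,Y\rangle_E$ is a well-defined real number. The only terms that could fail to be finite are the integrals $\int_{t_0}^{t_f}\xi_i(t)\eta_i(t)\,dt$; by the Cauchy--Schwarz inequality on $L^2[t_0,\,t_f]$ each is bounded by $\|\xi_i\|_{L^2}\|\eta_i\|_{L^2}<\infty$ because $\xi_i,\eta_i\in L^2[t_0,\,t_f]$, while the Euclidean part $\sum_j u_j v_j$ is a finite sum of products of reals. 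Hence the sum defining $\langle X,\,Y\rangle_E$ is finite, so the pairing takes values in $\mathbb{R}$.

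Next I would check the inner-product axioms. Symmetry is immediate from $\xi_i\eta_i=\eta_i\xi_i$ and $u_jv_j=v_ju_j$. Bilinearity follows from linearity of the Lebesgue integral and of multiplication on $\mathbb{R}$. For positive-definiteness, $\langle X,\,X\rangle_E=\sum_i\int_{t_0}^{t_f}\xi_i(t)^2\,dt+\sum_j u_j^2\geq 0$, and this vanishes exactly when every $\xi_i=0$ $\nu$-almost everywhere and every $u_j=0$, i.e. when $X$ is the zero element of $E$ (recalling that elements of the $L^2$ factors are identified up to $\nu$-null sets). This shows $\langle\cdot,\,\cdot\rangle_E$ is a genuine inner product, so $\|\cdot\|_E$ is a well-defined norm by construction.

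The substantive step is completeness. I would first observe that the $E$-norm dominates each component norm: for every $X\in E$ one has $\|\xi_i\|_{L^2}\leq\|X\|_E$ for each $i\in\{1,\ldots,|\mathcal{P}|\}$ and $|u_j|\leq\|X\|_E$ for each $j\in\{1,\ldots,|\mathcal{W}|\}$, directly from the definition of $\|X\|_E^2$ as a sum of nonnegative terms. Consequently, given a Cauchy sequence $\{X^{(n)}\}$ in $E$, each coordinate sequence $\{\xi_i^{(n)}\}$ is Cauchy in $L^2[t_0,\,t_f]$ and each $\{u_j^{(n)}\}$ is Cauchy in $\mathbb{R}$. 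I would then invoke the Riesz--Fischer theorem, which gives completeness of $L^2[t_0,\,t_f]$, to obtain limits $\xi_i^{(n)}\to\xi_i^\ast$ in $L^2$, and completeness of $\mathbb{R}$ to obtain $u_j^{(n)}\to u_j^\ast$. Assembling $X^\ast=(\xi_1^\ast,\ldots,\xi_{|\mathcal{P}|}^\ast,u_1^\ast,\ldots,u_{|\mathcal{W}|}^\ast)\in E$ and using that there are only finitely many factors, I can pass the limit inside the finite sum to conclude $\|X^{(n)}-X^\ast\|_E^2=\sum_i\|\xi_i^{(n)}-\xi_i^\ast\|_{L^2}^2+\sum_j|u_j^{(n)}-u_j^\ast|^2\to 0$, so $X^{(n)}\to X^\ast$ in $E$.

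I do not expect a genuine obstacle here: the argument is the standard verification that a finite orthogonal direct sum of Hilbert spaces is again a Hilbert space, and the only nontrivial external input is the Riesz--Fischer completeness of $L^2[t_0,\,t_f]$. The one point to state carefully is that the $L^2$ coordinates are equivalence classes modulo $\nu$-null sets, which is precisely what makes positive-definiteness hold strictly; this matches the identification already anticipated in the text when it notes that the departure rates are defined only up to a set of measure zero.
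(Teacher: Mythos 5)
Your proposal is correct and follows essentially the same route as the paper: verify the three inner-product axioms and then deduce completeness of $E$ from completeness of the finitely many factors $L^2[t_0,\,t_f]$ and $\mathbb{R}$. The paper simply declares the axiom checks ``straightforward'' and cites the fact that a product of complete metric spaces is complete, whereas you supply the details (Cauchy--Schwarz for finiteness, the $\nu$-a.e.\ identification for positive-definiteness, and the coordinate-wise Riesz--Fischer argument), all of which are accurate.
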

\begin{proof}
A well-defined inner product must satisfy, for all $X,\,Y,\,Z\in E$, the following properties:
\begin{enumerate}
\item symmetry, i.e. $\left<X,\,Y\right>_E=\left<Y,\,X\right>_E$;
\item linearity, i.e. $\left<aX,\,Y\right>_E=a\left<X,\,Y\right>_E$ for all $a\in\mathbb{R}$, and $\left<X+Y,\,Z\right>_E=\left<X,\,Z\right>+\left<Y,\,Z\right>_E$; 
\item positive-definiteness, i.e. $\left<X,\,X\right>_E\geq 0$ and $\left<X,\,X\right>_E=0 \Rightarrow X=0$.
\end{enumerate}
\noindent It is straightforward to verify that the inner product defined in \eqref{inducedipdef} satisfies all these three conditions, and thus is well defined. Consequently, the induced norm $\|\cdot\|_E$ is also well defined. Finally, since both $\big(L^2[t_0,\,t_f]\big)^{|\mathcal{P}|}$ and $\mathbb{R}^{|\mathcal{W}|}$ are complete metric spaces, their product space $E$ is also a complete metric space, and hence a Hilbert space. 
\end{proof}

The set $\widetilde\Lambda$ of  admissible pairs $(h,\,Q)$ can now be embedded in the extended space $E$.  In view of the inverse demand function $\Theta=(\Theta_{ij}[\cdot]:\,(i,\,j)\in\mathcal{W})$, we introduce  the following notation.
$$
\Theta^-~\doteq~(-\Theta_{ij}[\cdot]:\,(i,\,j)\in\mathcal{W}):\quad\mathbb{R}_+^{|\mathcal{W}|}\longrightarrow \mathbb{R}_{--}^{|\mathcal{W}|}
$$
\noindent Consequently, we define the mapping 
\begin{equation}\label{calfdef}
\mathcal{F}: \widetilde \Lambda \longrightarrow E,\qquad (h,\,Q)~\mapsto~\left(\Psi(h)~,~ \Theta^-[Q]\right)
\end{equation}
\noindent where $(h,\,Q)\in\widetilde \Lambda$, $\Psi(h)\in\big(L_+^2[t_0,\,t_f]\big)^{|\mathcal{W}|},\, \Theta^-[Q]\in\mathbb{R}_{--}^{|\mathcal{W}|}$. Such a mapping is clearly well defined.  With the preceding discussion, the VI formulation \eqref{chapVI:elasticvi} is readily rewritten as the following variational inequality in the extended Hilbert space.

\begin{equation}\label{viextend}
\left. 
\begin{array}{c}
\text{find }X^{\ast }\in \widetilde\Lambda \text{ such that} \\ \\
\left<\mathcal{F}(X^*),\, X-X^*\right>_E~\geq~0 \\ \\
\forall X\in \widetilde\Lambda %
\end{array}%
\right\} VI\big(\mathcal{F},\, [t_0,\, t_f]\big) 
\end{equation}%
where $X=(h,\,Q)$ and $X^*=(h^*,\,Q^*)$. Problem (\ref{viextend}) is expressed in the most generic form of a variational inequality, which allows analyses regarding solution existence and computation to be carried out in a framework well supported by the VI theory in mathematical programming.

\section{Existence of SRDT E-DUE}\label{secedueexistence}

In this section we establish the existence result for \eqref{viextend}, which is an equivalent formulation of the E-DUE problem. Our proposed approach is meant to incorporate the most general dynamic network loading sub-model with minimum regularity requirements, and to yield existence of E-DUE without invoking the {\it a priori} upper bound on the path departure rates.

The analysis regarding solution existence for the variational inequality (\ref{viextend}) is based on the following theorem that extends Brouwer's fixed point existence theorem to topological vector spaces. 

\begin{theorem}\label{browderthm} 
\citep{Browder} Let $K$ be a compact convex subset of the locally convex
topological vector space $V$, $T$ a continuous (single-valued) mapping of $K$
into $V^{\ast }$, where $V^*$ is the dual space of $V$. Then there exists $u_{0}$ in $K$ such that 
\begin{equation*}
\Big<T(u_{0}),\,u-u_0\Big>~\geq ~0\qquad\forall u\in K
\end{equation*}%
\end{theorem}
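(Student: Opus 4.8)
The plan is to prove this as a classical variational-inequality existence result by reducing it, in the manner of Hartman--Stampacchia, to Brouwer's fixed point theorem on a finite-dimensional simplex. I would argue by contradiction: suppose no $u_0 \in K$ satisfies the asserted inequality. Then for every $w \in K$ there is some $u = u(w) \in K$ with $\langle T(w),\, u - w \rangle < 0$. The first step is to observe that, for each fixed $u \in K$, the set $U_u \doteq \{ w \in K : \langle T(w),\, u - w \rangle < 0 \}$ is open in $K$; this is precisely where the continuity of $T$ into $V^{*}$ is used, since it makes the scalar map $w \mapsto \langle T(w),\, u - w \rangle$ continuous. The failure assumption says that $\{U_u : u \in K\}$ is an open cover of $K$, so by compactness I extract a finite subcover $U_{u_1}, \dots, U_{u_n}$.

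The second step is the finite-dimensional reduction. Subordinate to this finite subcover I would choose a continuous partition of unity $\beta_1, \dots, \beta_n$ on $K$ (so $\beta_i \geq 0$, $\sum_i \beta_i \equiv 1$, and $\beta_i$ supported in $U_{u_i}$), and form the continuous map $p(w) \doteq \sum_{i=1}^n \beta_i(w)\, u_i$. Since every value $p(w)$ is a convex combination of $u_1, \dots, u_n$, the map $p$ carries the finite-dimensional simplex $\Delta \doteq \mathrm{conv}\{u_1, \dots, u_n\}$ continuously into itself, and Brouwer's theorem furnishes a fixed point $w^{*} = p(w^{*}) \in \Delta$.

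The third step derives the contradiction. Evaluating the pairing at the fixed point,
\[
0 ~=~ \langle T(w^{*}),\, p(w^{*}) - w^{*} \rangle ~=~ \sum_{i=1}^n \beta_i(w^{*})\, \langle T(w^{*}),\, u_i - w^{*} \rangle .
\]
For each index $i$ with $\beta_i(w^{*}) > 0$ one has $w^{*} \in U_{u_i}$, hence $\langle T(w^{*}),\, u_i - w^{*} \rangle < 0$; since the weights are nonnegative, sum to one, and at least one is positive, the right-hand side is strictly negative, contradicting the equality to zero. Thus a solution $u_0$ must exist. As a structurally cleaner alternative I could encode the claim through the Ky Fan / KKM lemma: set $F(u) \doteq \{w \in K : \langle T(w),\, u - w\rangle \geq 0\}$, verify the KKM covering property $\mathrm{conv}\{u_1,\dots,u_n\} \subseteq \bigcup_i F(u_i)$ by the same convex-combination computation, note that each $F(u)$ is closed (hence compact in $K$) by continuity, and conclude $\bigcap_{u \in K} F(u) \neq \emptyset$ via the finite-intersection property; any point of the intersection solves the variational inequality.

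The main obstacle I anticipate is topological rather than algebraic: ensuring that the continuity hypothesis on $T$ genuinely renders each $U_u$ open and the scalar map $w \mapsto \langle T(w),\, u - w\rangle$ continuous, given that $V$ is only a locally convex topological vector space and $T$ takes values in the dual $V^{*}$. One must fix the topology on $V^{*}$ with respect to which $T$ is continuous and with respect to which the duality pairing $V^{*} \times V \to \mathbb{R}$ is suitably continuous, and one must know that the compact $K$ admits continuous partitions of unity subordinate to finite open covers (which holds once $K$ is compact Hausdorff). These points are routine in the standard setting, but they are exactly the structural hypotheses that make the reduction to Brouwer's theorem legitimate in the infinite-dimensional context in which the theorem is later applied.
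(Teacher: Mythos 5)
The paper does not actually prove this statement: it is quoted as Theorem 2 of the cited reference, and the ``proof'' in the text is simply ``See \cite{Browder}.'' Your proposal therefore supplies an argument where the paper supplies none, and the argument you give is the correct classical one --- the Hartman--Stampacchia/Browder reduction to Brouwer's theorem via a finite subcover of the sets $U_u$ and a subordinate partition of unity, with the KKM formulation as an equivalent repackaging. The contradiction computation at the fixed point $w^*=p(w^*)$ is exactly right: every positively weighted index $i$ forces $\langle T(w^*),\,u_i-w^*\rangle<0$, so the convex combination cannot vanish, and the restriction of $p$ to the finite-dimensional simplex $\mathrm{conv}\{u_1,\dots,u_n\}\subseteq K$ is a legitimate domain for Brouwer's theorem. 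The one point you rightly single out as non-formal is the openness of the sets $U_u$, equivalently the continuity of $w\mapsto\langle T(w),\,u-w\rangle$; this does not follow from separate continuity of the duality pairing alone, and it is secured by taking $T$ continuous into $V^*$ with a topology (e.g.\ uniform convergence on compact or bounded sets) for which the pairing is continuous on $T(K)\times K$ --- harmless here because $K$ is compact, and moot in the paper's application, where $V=E$ is a Hilbert space, $T=\mathcal{F}$ is norm-continuous, and the pairing is jointly continuous outright. Partitions of unity subordinate to a finite open cover of a compact Hausdorff set exist by normality, as you note. In short, your proof is sound and fills in precisely what the paper delegates to the reference.
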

\begin{proof}
See \cite{Browder}.
\end{proof}

In preparation for our existence proof, we recap several key results from functional analysis that facilitate our presentation. In particular, we note the following facts without proofs. The reader is referred to \cite{Royden} for more detailed discussion on these subjects. 

\begin{proposition}
The space $L^2[t_0,\,t_f]$ is a locally convex topological vector space. In addition, the $|\mathcal{P}|$-fold product of this space, denoted by $\big(L^2[t_0,\,t_f]\big)^{|\mathcal{P}|}$, is also a locally convex topological vector space. Finally, the product space  $\big(L^2[t_0,\,t_f]\big)^{|\mathcal{P}|}\times \mathbb{R}^{|\mathcal{W}|}$ is again a locally  convex topological vector space.
\end{proposition}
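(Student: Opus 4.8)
The plan is to reduce all three assertions to a single general fact — that every normed vector space, endowed with the topology induced by its norm, is a locally convex topological vector space — and then to observe that this property is inherited by finite products. Since $L^2[t_0,\,t_f]$ is a Hilbert space it is in particular a normed space, and the same holds for the two product spaces once they carry their natural product inner products. Indeed, for the full product $\big(L^2[t_0,\,t_f]\big)^{|\mathcal{P}|}\times\mathbb{R}^{|\mathcal{W}|}$ this is exactly the content of Proposition \ref{propE}, which already identifies it as a Hilbert space under $\langle\cdot,\cdot\rangle_E$.

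First I would recall the definition: a real topological vector space is a vector space carrying a topology under which vector addition and scalar multiplication are continuous, and it is \emph{locally convex} if the origin admits a neighborhood base of convex sets. For the base case $L^2[t_0,\,t_f]$, I would take the topology to be the metric topology induced by the $L^2$-norm and verify the two continuity requirements directly. Continuity of addition follows from the triangle inequality, since $\|(x+y)-(x_0+y_0)\|\leq\|x-x_0\|+\|y-y_0\|$; continuity of scalar multiplication follows from the estimate $\|\alpha x-\alpha_0 x_0\|\leq|\alpha-\alpha_0|\,\|x\|+|\alpha_0|\,\|x-x_0\|$ together with the boundedness of $\|x\|$ near $x_0$. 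Local convexity is immediate: the open balls $\{x:\|x\|<r\}$ centered at the origin form a neighborhood base, and each such ball is convex by the triangle inequality and positive homogeneity of the norm.

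For the two product spaces I would proceed in either of two equivalent ways. The direct route is to equip $\big(L^2[t_0,\,t_f]\big)^{|\mathcal{P}|}$ and then $E=\big(L^2[t_0,\,t_f]\big)^{|\mathcal{P}|}\times\mathbb{R}^{|\mathcal{W}|}$ with the product inner products (the latter being $\langle\cdot,\cdot\rangle_E$ of \eqref{inducedipdef}), observe that each is again a normed space — for $E$ this is Proposition \ref{propE} — and re-run the base-case argument verbatim. The alternative is to invoke the standard structural fact that a finite product of locally convex topological vector spaces, given the product topology, is again locally convex, using that $\mathbb{R}^{|\mathcal{W}|}$ is trivially such a space. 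Either path closes all three claims.

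The result is essentially routine, so there is no genuine obstacle, only two points deserving care. The first is the definiteness of the $L^2$-norm: because elements of $L^2[t_0,\,t_f]$ are equivalence classes of functions agreeing $\nu$-almost everywhere (as already noted in the text), $\|x\|=0$ forces $x=0$ as a class, which is what makes the topology Hausdorff and the space a genuine — not merely seminormed — normed space. The second is verifying, in the product route, that the topology induced by the product norm coincides with the product topology; this equivalence is standard for finite products but should be stated rather than silently assumed. Beyond these, every step is a mechanical application of the norm inequalities.
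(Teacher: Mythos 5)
Your proof is correct and complete. The paper itself offers no argument for this proposition --- it is stated among "facts without proofs" with a pointer to \cite{Royden} --- and your reduction to the standard facts that a normed (here Hilbert) space is a locally convex topological vector space and that local convexity is preserved under finite products is precisely the justification the authors are implicitly invoking; the two caveats you flag (definiteness of the $L^2$-norm on a.e.-equivalence classes, and agreement of the product-norm topology with the product topology) are exactly the right places to be careful.
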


\begin{proposition}
The dual space of $\big(L^{2}[t_0,\,t_f]\big)^{|\mathcal{P}|}$  has a natural isomorphism with $\big(L^{2}[t_0,\,t_f]\big)^{|\mathcal{P}|}$. The dual space of the Euclidean space $\mathbb{R}^{|\mathcal{W}|}$ consisting of columns of $|\mathcal{W}|$ real numbers is interpreted as the space consisting of rows of $|\mathcal{W}|$ real numbers. As a consequence, the dual space of $\big(L^2[t_0,\,t_f]\big)^{|\mathcal{P}|}\times\mathbb{R}^{|\mathcal{W}|}$ is again $\big(L^2[t_0,\,t_f]\big)^{|\mathcal{P}|}\times\mathbb{R}^{|\mathcal{W}|}$.
\end{proposition}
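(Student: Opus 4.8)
The plan is to assemble the claim from three standard ingredients: the self-duality of $L^2$, the behaviour of duality under finite direct products, and the self-duality of finite-dimensional Euclidean space. The only substantive analytic input is the first; the remaining two steps are essentially bookkeeping, and the identifications all come out \emph{natural} precisely because the pairings realizing them are the inner products already introduced on $E$.

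First I would establish that the dual of the single Hilbert space $L^2[t_0,\,t_f]$ is naturally isomorphic to itself. This is exactly the Riesz representation theorem: for every bounded linear functional $\phi$ on $L^2[t_0,\,t_f]$ there is a unique $g\in L^2[t_0,\,t_f]$ with
$$
\phi(f)~=~\int_{t_0}^{t_f}f(t)\,g(t)\,dt\qquad\forall f\in L^2[t_0,\,t_f],\qquad \|\phi\|~=~\|g\|_{L^2}.
$$
The map $\phi\mapsto g$ is then a linear isometric bijection, which is the asserted natural isomorphism; its naturality lies in the fact that the functional is represented through the very $L^2$ inner product used throughout the paper.

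Next I would reduce both product cases to this one via the general principle that the dual of a finite direct product of Banach spaces is (isometrically) isomorphic to the direct product of the duals. Concretely, given $\phi$ in the dual of $\big(L^2[t_0,\,t_f]\big)^{|\mathcal{P}|}$, I would define component functionals $\phi_i$ on the $i$-th factor by evaluating $\phi$ on tuples that are zero in all slots but the $i$-th; linearity then gives $\phi(\xi_1,\dots,\xi_{|\mathcal{P}|})=\sum_{i}\phi_i(\xi_i)$, and applying Riesz to each $\phi_i$ identifies $\phi$ with a tuple in $\big(L^2[t_0,\,t_f]\big)^{|\mathcal{P}|}$. The identical decomposition, applied to $\mathbb{R}^{|\mathcal{W}|}$, recovers the elementary fact that every linear functional on $\mathbb{R}^{|\mathcal{W}|}$ is a dot product with a fixed vector, i.e.\ the row-vector/column-vector pairing named in the statement. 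Finally I would invoke the product principle once more for the direct product $\big(L^2[t_0,\,t_f]\big)^{|\mathcal{P}|}\times\mathbb{R}^{|\mathcal{W}|}$ itself, splitting any functional into its restrictions to the two factors and identifying each piece by the two results just obtained, so that the dual is again $\big(L^2[t_0,\,t_f]\big)^{|\mathcal{P}|}\times\mathbb{R}^{|\mathcal{W}|}$.

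I do not anticipate a genuine obstacle: once the Riesz theorem is in hand, the product decomposition is purely formal and needs only the finiteness of $|\mathcal{P}|$ and $|\mathcal{W}|$, so no subtleties about completed direct sums or weak-$*$ topologies intervene. The single point deserving care is verifying that the product inner product of \eqref{inducedipdef} is the pairing that realizes the identification, so that the resulting isomorphism is the natural one compatible with the Hilbert-space structure of $E$ rather than merely an abstract isomorphism of vector spaces; this is what makes the proposition usable in conjunction with Browder's Theorem \ref{browderthm}, which requires a mapping of $K$ into the dual $V^{\ast}$.
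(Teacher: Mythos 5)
Your proof is correct. Note that the paper itself offers no proof of this proposition: it is listed among several facts from functional analysis that are ``noted without proofs,'' with the reader referred to the cited real analysis text. Your argument --- Riesz representation for the single factor $L^2[t_0,\,t_f]$, the identification of the dual of a finite direct product of Banach spaces with the product of the duals, and the elementary self-duality of $\mathbb{R}^{|\mathcal{W}|}$ --- is the standard route and fills the gap completely; your closing observation that the pairing realizing the isomorphism is exactly the inner product $\left<\cdot,\,\cdot\right>_E$ of \eqref{inducedipdef}, which is what makes the proposition compatible with the application of Theorem \ref{browderthm}, is precisely the point the paper relies on implicitly.
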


\begin{proposition}\label{propsition3}
In a metric space (and topological vector space), the notion of compactness is equivalent to the notion of sequential compactness; that is, every infinite sequence has a convergent subsequence. 
\end{proposition}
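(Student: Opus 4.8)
The plan is to prove the two implications separately, this being a classical equivalence in the topology of metric spaces. For the forward direction I would first show that compactness implies every infinite subset has a limit (accumulation) point: if an infinite set $A$ had no limit point, then $A$ is closed and each of its points admits an open neighborhood meeting $A$ in exactly that one point, so these neighborhoods together with the complement of $A$ form an open cover with no finite subcover, contradicting compactness. Given then an arbitrary sequence $(x_n)$, either some value is attained infinitely often, yielding a trivially convergent constant subsequence, or the range is infinite and hence possesses a limit point $x$; exploiting the first-countability of the metric through the nested balls $B(x,1/k)$, I would extract a subsequence converging to $x$. This establishes that compactness implies sequential compactness.

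For the reverse direction, which I expect to be the main obstacle, I would establish three facts about a sequentially compact metric space $X$. First, $X$ is \emph{totally bounded}: if not, one can greedily construct a sequence whose terms are pairwise at least some fixed $\varepsilon$ apart, and such a sequence admits no Cauchy subsequence and hence no convergent subsequence, a contradiction. Second, $X$ is \emph{complete}, since any Cauchy sequence has a convergent subsequence by hypothesis, and a Cauchy sequence possessing a convergent subsequence converges. Third, I would prove the Lebesgue number lemma: given an open cover $\mathcal{U}$, if no positive $\delta$ served as a Lebesgue number, then for each $n$ there is a ball $B(y_n,1/n)$ contained in no member of $\mathcal{U}$; passing to a subsequence $y_{n_k}\to y$ and choosing $U\in\mathcal{U}$ together with $r>0$ so that $B(y,r)\subseteq U$, one finds $B(y_{n_k},1/n_k)\subseteq U$ for all large $k$, a contradiction.

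Finally I would assemble these ingredients: given any open cover $\mathcal{U}$, select a Lebesgue number $\delta>0$, and use total boundedness to cover $X$ by finitely many balls of radius $\delta$; each such ball lies inside a single member of $\mathcal{U}$, so choosing one member per ball produces a finite subcover. This yields compactness and closes the equivalence. The delicate point throughout is the reverse implication, where the Lebesgue number lemma is the crucial device that converts the purely sequential hypothesis into a genuinely covering-theoretic conclusion; the total-boundedness and completeness steps are comparatively routine but are indispensable for its deployment.
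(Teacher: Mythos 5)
Your proposed argument is correct and is the standard textbook proof of the equivalence; note, however, that the paper itself states this proposition \emph{without proof}, deferring to \cite{Royden}, so there is no in-paper argument to compare against. Both directions of your outline are sound: the forward direction via the limit-point (Bolzano--Weierstrass) property and first countability, and the reverse direction via total boundedness plus the Lebesgue number lemma. One small remark: completeness, while true and correctly derived, is not actually used in your final assembly --- the finite subcover is produced from the Lebesgue number and total boundedness alone, so calling it ``indispensable'' overstates its role; it would only become essential in the alternative route that deduces compactness from completeness together with total boundedness. This is a cosmetic point and does not affect the validity of the proof.
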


Theorem \ref{browderthm} is immediately applicable for showing that \eqref{viextend} has a solution if (1) $\mathcal{F}$ is continuous; and (2) $\widetilde \Lambda\subset E$ is compact. The continuity of $\mathcal{F}$ amounts to the continuity of the path delay operator $\Psi$, which has been shown in the literature for different link flow dynamics and network extensions. These results include the continuity result with the {\it link delay model} \citep{Friesz1993} established in \cite{LDMcont}; the continuity result with the {\it Vickrey model} \citep{Vickrey}  established in \cite{existence}; and the continuity result with the LWR-Lax model \citep{FHNMY} established in \cite{BH1}.

 Unfortunately, the second condition involving compactness does not generally hold for the infinite-dimensional (continuous-time) problem we study here. To overcome such an obstacle, we proceed in a similar way as in \cite{existence} by considering finite-dimensional approximations of the underlying infinite-dimensional Hilbert space. Another major hurdle that stymied many researchers is the {\it a priori} upper bound on the path departure rates. Such a bound is important for a topological argument that we will rely on in the proof, but does not arise from any physical or behavioral perspective of traffic modeling. In fact, as observed by \cite{BH}, the equilibrium path flows could very well become unbounded or even contain Dirac-delta \footnote{In this case, no solution in the sense of Definition \ref{dueelasticdef} exists.}, if no additional assumptions are made regarding exogenous parameters of the differential Nash game, such as travelers' disutility functions.

The following assumptions are key to our existence result. The first assumption, {\bf (A1)}, poses reasonable hypothesis on drivers' perceived arrival costs. In particular, \eqref{fassumption} holds on the basis that the unit cost of early arrival is always less than or equal to the unit cost of elapsed travel time \citep{Small}. The reader is also referred to \cite{existence} for the motivation and generality of such an assumption.  The second assumption, {\bf (A2)}, is concerned with the DNL model and can be easily verified by  models such as the Vickrey model \citep{Vickrey, GVM1, GVM2}, the LWR-Lax model \citep{FHNMY}, the cell transmission model \citep{CTM1, CTM2}, the link transmission model \citep{LTM}, and the Lighthill-Whitham-Richards model \citep{LW, Richards}. Assumption {\bf (A3)} is one instantiation of the continuity of the effective delay operator $\Psi$, and has been shown to hold for several network loading models. These include the Vickrey model \citep{Vickrey, GVM1, GVM2} and the LWR-Lax model \citep{FHNMY}, for which {\bf (A3)} is established by \cite{BH1}; and the link delay model \citep{Friesz1993}, for which {\bf (A3)} is shown in \cite{LDMcont}. We refer the reader to these references for detailed proofs of {\bf (A3)}.

\begin{itemize}
\item[{\bf (A1).}] The function $f(\cdot )$ appearing in \eqref{cost} is
continuous on $[t_0,\,t_f]$ and satisfies 
\begin{equation}\label{fassumption}
f(t_2)-f(t_1)~\geq~\Delta (t_2-t_1) \qquad \forall~ t_0~\leq~t_1~<~t_2~\leq~t_f
\end{equation}
for some $\Delta >-1$\\

\item[ {\bf (A2).}] The {\it first-in-first-out} (FIFO) rule is obeyed on the path level. In addition, each link $a\in \mathcal{A}$ in the network has
a finite exit flow capacity $M_{a}~<~\infty $. \\

\item[{\bf (A3).}] For any sequence of departure rate vectors $\{h^n(\cdot)\}_{n\geq 1}$ that are uniformly bounded point wise by a positive constant and  converge weakly to $h^*\in \big(L^2[t_0,\,t_f]\big)^{|\mathcal{P}|}$, the corresponding effective path delays $\Psi_p(t,\,h^n)$ converge to $\Psi_p(t,\,h^*)$ uniformly for all $p\in\mathcal{P}$ and $t\in[t_0,\,t_f]$.

\end{itemize}

\noindent We are now ready to state and prove the main result of this section.

\begin{theorem}\label{existencethm}{\bf (Existence of E-DUE)}
Let assumptions {\bf (A1)}-{\bf (A3)} hold. If, in addition,  the inverse demand function $\Theta: \mathbb{R}_+^{|\mathcal{W}|}\rightarrow \mathbb{R}_{++}^{|\mathcal{W}|}$ is continuous, then the E-DUE problem has a solution.
\end{theorem}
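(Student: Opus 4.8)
The plan is to establish existence by applying Browder's theorem (Theorem \ref{browderthm}) along a sequence of finite-dimensional restrictions of $\widetilde\Lambda$ and then passing to a weak limit. The two genuine difficulties flagged before the statement — non-compactness of $\widetilde\Lambda$ in the $L^2$ topology and the absence of an a priori bound on path flows — are handled in two separate stages, and by Theorem \ref{vielasticthm} it suffices to produce a solution of $VI(\mathcal{F},[t_0,t_f])$.

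First I would build a Galerkin-type approximation. Fix a nested sequence of partitions of $[t_0,t_f]$ and let $E_n\subset E$ be the finite-dimensional subspace of vectors whose path-flow components are piecewise constant on the $n$-th partition, with $\bigcup_n E_n$ dense in $E$. Setting $\widetilde\Lambda_n=\widetilde\Lambda\cap E_n$, the conditional-expectation (interval-averaging) operator maps $\widetilde\Lambda$ onto $\widetilde\Lambda_n$ while preserving both non-negativity and the flow-conservation equalities in \eqref{cons}, so every $X\in\widetilde\Lambda$ admits approximants $X^{(n)}\in\widetilde\Lambda_n$ with $X^{(n)}\to X$ strongly in $E$. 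Each $\widetilde\Lambda_n$ is convex but still unbounded, so I would intersect it with a closed ball $\bar B_M\subset E$ to obtain a compact convex set $K_n$; on $K_n$ the operator $\mathcal{F}=(\Psi,\Theta^-)$ is continuous — continuity of $\Psi$ follows from (A3), since weak and strong convergence coincide on the finite-dimensional bounded set $K_n$, and continuity of $\Theta^-$ is the hypothesis on $\Theta$ — so Theorem \ref{browderthm} yields $X_n^\ast=(h_n^\ast,Q_n^\ast)\in K_n$ solving the VI over $K_n$.

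The crux is a uniform a priori bound, which is where assumptions (A1) and (A2) enter and which I expect to be the main obstacle. I would show there is a constant $M_0$, independent of $n$, such that every such solution satisfies $\|h_n^\ast\|_\infty\leq M_0$ and $\|Q_n^\ast\|\leq M_0$; each $X_n^\ast$ obeys the discrete analogue of \eqref{chapVI:eqn1}-\eqref{chapVI:eqn2} (cost equalization across its active piecewise-constant components), and the bound rests on the same mechanism in the discrete and continuous settings. The bound on $Q_n^\ast$ is the easier half: the finite exit-flow capacities from (A2), together with FIFO, cap the total volume dischargeable over $[t_0,t_f]$, so the served demand cannot exceed the network throughput. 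The bound on $\|h_n^\ast\|_\infty$ is delicate: if the flow on some path concentrated beyond the downstream capacity on a set of positive measure, FIFO queueing would drive the realized delay $D_p$, and hence the effective delay $\Psi_p$, above the minimum cost, contradicting equilibrium. Assumption (A1) with $\Delta>-1$ is exactly what makes $\Psi_p$ strictly increasing in arrival time once a queue forms, so that delaying into a congested window never lowers cost, forcing the flow to spread out and stay bounded. Choosing $M>M_0$ makes the ball constraint inactive, so each $X_n^\ast$ in fact solves the VI over all of $\widetilde\Lambda_n$.

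Finally I would pass to the limit. Since $\{X_n^\ast\}$ is bounded in the Hilbert space $E$ (Proposition \ref{propE}), Proposition \ref{propsition3} and reflexivity give a subsequence with $h_{n_k}^\ast\wto h^\ast$ weakly in $(L^2[t_0,t_f])^{|\mathcal{P}|}$ and $Q_{n_k}^\ast\to Q^\ast$ in $\mathbb{R}^{|\mathcal{W}|}$; because $\widetilde\Lambda$ is closed and convex and $\|h^\ast\|_\infty\leq M_0$, the limit $X^\ast=(h^\ast,Q^\ast)$ lies in $\widetilde\Lambda$. For fixed $X\in\widetilde\Lambda$ with approximants $X^{(n)}\in\widetilde\Lambda_n$, the inequalities $\langle\mathcal{F}(X_n^\ast),X^{(n)}-X_n^\ast\rangle_E\geq0$ pass to the limit: (A3) upgrades $h_{n_k}^\ast\wto h^\ast$ to uniform, hence $L^2$-strong, convergence $\Psi(h_{n_k}^\ast)\to\Psi(h^\ast)$, and continuity of $\Theta$ gives $\Theta^-[Q_{n_k}^\ast]\to\Theta^-[Q^\ast]$, so $\mathcal{F}(X_{n_k}^\ast)\to\mathcal{F}(X^\ast)$ strongly. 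Pairing this strong convergence against the norm-bounded differences $X^{(n_k)}-X_{n_k}^\ast$ yields $\langle\mathcal{F}(X^\ast),X-X^\ast\rangle_E\geq0$ for every $X\in\widetilde\Lambda$, which is precisely $VI(\mathcal{F},[t_0,t_f])$ and hence an E-DUE.
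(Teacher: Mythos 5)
Your proposal is correct and follows essentially the same route as the paper: piecewise-constant finite-dimensional restrictions of $\widetilde\Lambda$, Browder's theorem on each, a uniform bound on the approximate equilibria derived from (A1), (A2), FIFO and the discrete cost-equalization condition across subintervals, and a weak-limit passage in which (A3) upgrades $\mathcal{F}(X_n^\ast)$ to strong convergence so it can be paired against the weakly convergent differences. The only substantive deviation is your claim that (A2) bounds $Q_n^\ast$ via network throughput --- the feasible set constrains departures, not arrivals, so this does not follow; the paper instead simply postulates (as ``not restrictive'') an upper bound $U_{ij}$ on each demand, which already renders each $\widetilde\Lambda_n$ compact and makes your auxiliary ball $\bar B_M$ unnecessary.
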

\begin{proof}
The proof is postponed until \ref{secappexistencethm}.
\end{proof}

\section{Computation of the E-DUE problem}\label{seccomputation}

Computation of the elastic demand DUE is most facilitated by the variational inequality (VI) formulation proposed in this paper. Some methods commonly seen for solving VIs or equivalent mathematical forms include the projection method or the fixed-point method, among others; see Section \ref{secintrocomp} for a review of more methods. Another method less known to the traffic research community is the {\it proximal point method} (PPM) \citep{Konnov, Allevi}, which will be detailed in this section.

We present three computational methods for the E-DUE model, while analyzing their convergence conditions based on generalized monotonicity of the delay operator. These methods are: the projection (fixed-point) method, the self-adaptive projection method, and the proximal point method.

\subsection{The projection method}\label{subsecprojection}

The projection method for solving the VI \eqref{viextend} requires the following iterative process
\begin{equation}\label{projection}
X^{k+1}~=~P_{\widetilde\Lambda}\left[ X^{k} - \alpha \mathcal{F}(X^k)\right]
\end{equation}
where $P_{\widetilde \Lambda}[\cdot]$ is the minimum-norm projection onto the convex set $\widetilde\Lambda$. Here, $X^{k}=(h^k,\,Q^k)$ belongs to the extended Hilbert space $E$. In order to explicitly describe this projection operator in an infinite-dimensional and extended Hilbert space, we invoke the {\it differential variational inequality} (DVI) formalism \citep{FM2014}. This is done by observing that the elastic demand satisfaction constraint \eqref{tildelambdadef} can be easily rewritten as a two-point boundary value problem, leading to the following equivalent definition of the feasible set 
\begin{equation}
\widetilde\Lambda_1\doteq\left\{(h,\,Q): {d y_{ij}(t)\over dt}=\sum_{p\in\mathcal{P}_{ij}}h_p(t),~~ y_{ij}(t_0)=0,~~ y_{ij}(t_f)=Q_{ij} ~~\forall (i,\,j)\in\mathcal{W}\right\}~\subset \big(L^2[t_0,\,t_f]\big)^{|\mathcal{P}|}\times \mathbb{R}^{|\mathcal{W}|}
\end{equation}
The next theorem establishes an explicit formula for the right hand side of \eqref{projection}, based on the DVI theory.

\begin{theorem}\label{thmexplicitproj}
Given any $X^k=(h^k,\,Q^k)\in\widetilde\Lambda$, then $X^{k+1}$ appearing on the left hand side of \eqref{projection} can be expressed as $X^{k+1}=(h^{k+1},\,Q^{k+1})$ where
\begin{align}
\label{projex1}
h_p^{k+1}(t)~=~&\left[h_p^k(t)-\alpha\Psi_p(t,\,h^k)+Q_{ij}^k+\alpha\Theta_{ij}[Q^k]-Q_{ij}^{k+1}\right]_+ \qquad\forall p\in\mathcal{P}
\\
\label{projex2}
Q_{ij}^{k+1}~=~&\sum_{p\in\mathcal{P}_{ij}}\int_{t_0}^{t_f}\Big[h_p^k(t)-\alpha\Psi_p(t,\,h^k)+Q_{ij}^k+\alpha\Theta_{ij}\big[Q^k\big]-Q_{ij}^{k+1}\Big]_+ dt \qquad\forall (i,\,j)\in\mathcal{W}
\end{align}
In addition, such $h^{k+1}$ and $Q^{k+1}$ are unique.
\end{theorem}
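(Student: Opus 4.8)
The plan is to recognize $X^{k+1}=P_{\widetilde\Lambda}[Z]$, with $Z=X^k-\alpha\mathcal{F}(X^k)=\big(h^k-\alpha\Psi(h^k),\,Q^k+\alpha\Theta[Q^k]\big)$, as the unique minimizer over $\widetilde\Lambda$ of the strictly convex functional $\tfrac12\|X-Z\|_E^2$. First I would record that $\widetilde\Lambda$ is a nonempty closed convex subset of the Hilbert space $E$: convexity is immediate from \eqref{tildelambdadef}, and closedness follows because $L^2_+[t_0,\,t_f]$ is closed and each functional $h\mapsto\int_{t_0}^{t_f}h_p\,dt$ is $\|\cdot\|_E$-continuous (the constant function $1$ lying in $L^2[t_0,\,t_f]$). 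The Hilbert projection theorem then yields existence and uniqueness of the minimizer, which already settles the final sentence of the statement. A key structural observation is that both the objective and the constraints in \eqref{tildelambdadef} decouple across origin-destination pairs, since every path $p$ belongs to exactly one $\mathcal{P}_{ij}$; hence the projection may be analyzed pair by pair.

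Next I would read off the closed form from the first-order optimality conditions. Writing $a_p(t)=h_p^k(t)-\alpha\Psi_p(t,h^k)$ and $b_{ij}=Q_{ij}^k+\alpha\Theta_{ij}[Q^k]$, I introduce a scalar multiplier $\mu_{ij}$ for each equality constraint $\sum_{p\in\mathcal{P}_{ij}}\int h_p\,dt=Q_{ij}$ and a nonnegative multiplier function $\lambda_p(\cdot)$ for the pointwise constraint $h_p\geq0$. Stationarity in $Q_{ij}$ gives $Q_{ij}=b_{ij}+\mu_{ij}$, while stationarity in $h_p$ gives $h_p(t)=a_p(t)-\mu_{ij}+\lambda_p(t)$; combining this with $\lambda_p\geq0$, $h_p\geq0$ and complementary slackness $\lambda_p(t)h_p(t)=0$ for $\nu$-a.e.\ $t$ forces the thresholding identity $h_p(t)=[a_p(t)-\mu_{ij}]_+$. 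Eliminating $\mu_{ij}=Q_{ij}-b_{ij}$ and setting $Q_{ij}=Q_{ij}^{k+1}$ reproduces \eqref{projex1} verbatim, and substituting into the conservation constraint $\sum_{p\in\mathcal{P}_{ij}}\int h_p^{k+1}\,dt=Q_{ij}^{k+1}$ reproduces the implicit relation \eqref{projex2}.

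Two points require care to make this rigorous, and their interplay is the main obstacle. First, to avoid invoking an infinite-dimensional constraint qualification for the multiplier $\lambda_p\in L^2$, I would promote the derivation to a direct verification: taking the candidate defined by \eqref{projex1}--\eqref{projex2}, I check the variational characterization $\langle Z-X^{k+1},\,X-X^{k+1}\rangle_E\leq0$ for every $X=(h,Q)\in\widetilde\Lambda$. Using $\mu_{ij}=Q_{ij}^{k+1}-b_{ij}$ and the conservation constraints to absorb the $Q$-terms into the path integrals via $\mu_{ij}(Q_{ij}-Q_{ij}^{k+1})=\sum_{p\in\mathcal{P}_{ij}}\int\mu_{ij}(h_p-h_p^{k+1})\,dt$, each origin-destination block collapses to $\sum_{p\in\mathcal{P}_{ij}}\int\big(w_p-[w_p]_+\big)\big(h_p-[w_p]_+\big)\,dt$ with $w_p(t)=a_p(t)-\mu_{ij}$, which is nonpositive termwise by the elementary inequality $(w-[w]_+)(x-[w]_+)\leq0$, valid for all $w\in\mathbb{R}$ and $x\geq0$ (here $x=h_p(t)\geq0$). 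Second, I must establish that the scalar equation \eqref{projex2} for $q:=Q_{ij}^{k+1}$ is solvable: writing its right-hand side as $\phi_{ij}(q)=\sum_{p\in\mathcal{P}_{ij}}\int_{t_0}^{t_f}[c_p(t)-q]_+\,dt$ with $c_p=a_p+b_{ij}\in L^1[t_0,\,t_f]$, the map $\phi_{ij}$ is continuous and non-increasing, with $\phi_{ij}(q)\to+\infty$ as $q\to-\infty$ and $\phi_{ij}(q)\to0$ as $q\to+\infty$ (monotone convergence), so $q\mapsto\phi_{ij}(q)-q$ is continuous and strictly decreasing from $+\infty$ to $-\infty$ and has a unique zero. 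This confirms \eqref{projex2} is well posed and independently re-derives uniqueness of $Q^{k+1}$, hence of $h^{k+1}$ through \eqref{projex1}.
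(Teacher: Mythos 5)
Your proof is correct, and it reaches the same closed-form expressions and the same final scalar analysis as the paper, but by a noticeably different route. The paper rewrites the feasible set as the two-point boundary-value set $\widetilde\Lambda_0$ and treats the minimum-norm projection as a linear-quadratic optimal control problem: it forms the Hamiltonian, derives the adjoint equation ${d\lambda_{ij}/dt}=0$ and the transversality condition $\lambda_{ij}(t_f)=-y^k_{ij}(t_f)+\alpha\Theta^-_{ij}[y^k(t_f)]+y_{ij}(t_f)$, and then applies the minimum principle with Kuhn--Tucker conditions on $h\geq 0$ to obtain the thresholding formula; the constant adjoint $\lambda_{ij}$ is exactly your multiplier $\mu_{ij}$. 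You instead invoke the Hilbert projection theorem on the closed convex set $\widetilde\Lambda$ (which also settles existence and uniqueness of the projection at the outset --- something the paper leaves implicit) and, rather than relying on necessary optimality conditions in an infinite-dimensional setting, you verify sufficiency directly through the variational characterization $\langle Z-X^{k+1},X-X^{k+1}\rangle_E\leq 0$, reduced termwise to $(w-[w]_+)(x-[w]_+)\leq 0$ after absorbing the $Q$-terms via the conservation constraints. This buys you a self-contained, fully rigorous argument that sidesteps any constraint-qualification issue for the multiplier $\lambda_p(\cdot)$, at the cost of losing the DVI/optimal-control interpretation the paper is deliberately advertising. Your treatment of the scalar equation for $Q_{ij}^{k+1}$ (continuity, strict decrease of $\phi_{ij}(q)-q$ from $+\infty$ to $-\infty$) is equivalent to the paper's two-case Intermediate Value Theorem argument, and slightly cleaner for unifying the degenerate case $Q_{ij}^{k+1}=0$.
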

\begin{proof}
The proof will be provided in \ref{secappthmexplicitproj}.
\end{proof}

The following pseudo code summarizes the projection algorithm.

\begin{framed}
\noindent {\bf Projection algorithm}
\begin{description} 

\item[Step 0] Identify an initial feasible point $X^0=(h^0,\,Q^0)\in\widetilde\Lambda$.  Set the iteration counter $k=0$. 

\item[Step 1] Solve the dynamic network loading problem with path departure rate vector given by $h^k$, and obtain the effective path delays $\Psi_p(t,\,h^k),~\forall t\in[t_0,\,t_f],\, p\in\mathcal{P}$. Find $Q^{k+1}$ and then $h^{k+1}$ according to \eqref{projex2} and \eqref{projex1} respectively.

\item[Step 2] Terminate the algorithm with output $X^*\approx (h^k,\,Q^k)$ if the relative gap
$$
{\left\|(h^{k+1},\,Q^{k+1})-(h^{k},\,Q^k)\right\|_E\over \left\|(h^k,\,Q^k)\right\|_E}~\leq~\epsilon
$$
where $\epsilon\in\mathbb{R}_{++}$ is a prescribed termination threshold. Otherwise, set $k=k+1$ and repeat Step 1 and Step 2. 
 
 \end{description}
 \end{framed}

Convergence of the projection method typically involves continuity and monotonicity of the principal operator which, in our case, is $\mathcal{F}(\cdot)$. This further requires the Lipschitz continuity and monotonicity of the path delay operator $\Psi$. The continuity of the delay operator has been established in a number of references as we noted in Section \ref{secedueexistence}. The monotonicity of the delay operator, on the other hand, may not hold for general networks and traffic flow models; see \cite{MS2007} for a counter example. There are, however, a few studies that show the monotonicity of $\Psi$ under specific assumptions; the reader is referred to  \cite{Mounce} and \cite{PR} for further details. Since the convergence of the projection method for continuous and monotone delay operator is well known, and a generalization of the proof to the elastic demand case is straightforward, we will not elaborate these convergence conditions and repeat the proof in this paper; the reader is referred to \cite{FP}, \cite{Mounce}, \cite{MC}, and \cite{FHNMY} for more details.

Instead, this paper proposes a new set of convergence conditions based on the property of {\it mixed strongly-weakly monotone} (MSWM) of the path delay operator. The MSWM assumes that a subset of the components (paths) of the operator $\Psi$ have the strongly monotone property, and the rest of the components need only be weakly monotone. A weakly monotone operator $M(x)$ is such that
\begin{equation}\label{weakmonotonedef}
\left<M(x_1)-M(x_2),\,x_1-x_2\right>~\geq~-K\left\|x_1-x_2\right\|^2\qquad\forall x_1,\,x_2 \qquad\hbox{for some } ~K~>~0
\end{equation}
The following lemma asserts that all Lipschitz continuous operators are weakly monotone, thereby showing the generality of the weak monotonicity condition. 
\begin{lemma}\label{lemmawm}
Let $U(\cdot)$ be a Lipschitz continuous map defined on a subset $\Omega$ of a topological vector space $V$, that is, there exists $L>0$ such that
$$
\left\|U(x_1)-U(x_2)\right\|~\leq~L\left\|x_1-x_2\right\| \qquad\forall x_1,\,x_2\in \Omega
$$ 
Then $U(\cdot)$ is weakly monotone on $\Omega$. 
\end{lemma}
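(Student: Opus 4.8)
The plan is to show directly from the Lipschitz estimate that the defect in monotonicity can be controlled by $\|x_1-x_2\|^2$ with an appropriate constant $K$. The key observation is that, by the Cauchy--Schwarz inequality in the topological vector space $V$ (equipped with the relevant inner product), the inner product $\left<U(x_1)-U(x_2),\,x_1-x_2\right>$ is bounded below by $-\left\|U(x_1)-U(x_2)\right\|\cdot\left\|x_1-x_2\right\|$. Applying the Lipschitz hypothesis to the first factor immediately yields the lower bound $-L\left\|x_1-x_2\right\|^2$, which is precisely \eqref{weakmonotonedef} with $K=L$.

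Concretely, first I would write, for arbitrary $x_1,\,x_2\in\Omega$,
\begin{equation*}
\left<U(x_1)-U(x_2),\,x_1-x_2\right>~\geq~-\left|\left<U(x_1)-U(x_2),\,x_1-x_2\right>\right|
\end{equation*}
simply because any real number is bounded below by the negative of its absolute value. Next I would invoke Cauchy--Schwarz to bound the absolute value of the inner product by the product of norms, giving
\begin{equation*}
\left|\left<U(x_1)-U(x_2),\,x_1-x_2\right>\right|~\leq~\left\|U(x_1)-U(x_2)\right\|\cdot\left\|x_1-x_2\right\|.
\end{equation*}
Finally I would substitute the Lipschitz bound $\left\|U(x_1)-U(x_2)\right\|\leq L\left\|x_1-x_2\right\|$ into the right-hand side, chaining the three inequalities to conclude that
\begin{equation*}
\left<U(x_1)-U(x_2),\,x_1-x_2\right>~\geq~-L\left\|x_1-x_2\right\|^2.
\end{equation*}
Since $L>0$, this is exactly the defining property \eqref{weakmonotonedef} of weak monotonicity with $K=L$, and since $x_1,\,x_2$ were arbitrary the conclusion holds on all of $\Omega$.

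The proof is essentially a two-line chain of standard inequalities, so there is no serious mathematical obstacle. The only point requiring a modest amount of care is that the statement is phrased for a \emph{topological vector space} $V$, whereas Cauchy--Schwarz and the very notion of $\left<\cdot,\,\cdot\right>$ presuppose an inner-product (Hilbert) structure; in the paper's setting the relevant space is always the Hilbert space $E$ of \eqref{inducedipdef} (or a subspace thereof), so I would simply understand $V$ to carry the inner product $\left<\cdot,\,\cdot\right>_E$ inducing the norm $\|\cdot\|$ appearing in the statement, under which Cauchy--Schwarz is available. Thus the ``hard part'' is really just ensuring the hypotheses are read in the inner-product setting where both the norm in the Lipschitz condition and the pairing in \eqref{weakmonotonedef} are the Hilbert-space ones; once that is granted, the estimate is immediate.
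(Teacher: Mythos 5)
Your proof is correct and is essentially identical to the paper's: both apply the Cauchy--Schwarz inequality to bound $\left<U(x_1)-U(x_2),\,x_1-x_2\right>$ below by $-\left\|U(x_1)-U(x_2)\right\|\cdot\left\|x_1-x_2\right\|$ and then invoke the Lipschitz bound to obtain $-L\left\|x_1-x_2\right\|^2$. Your side remark that the ``topological vector space'' phrasing should really be read in the inner-product (Hilbert) setting is a fair observation about the statement's hypotheses, but does not alter the argument.
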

\begin{proof}
According to the Cauchy-Schwarz inequality, we have
\begin{align*}
\left<U(x_1)-U(x_2),\,x_1-x_2\right>~\geq~-\left\|U(x_1)-U(x_2)\right\|\cdot\left\|x_1-x_2\right\|~\geq~-L\left\| x_1-x_2\right\|^2
\end{align*}
\end{proof}

\begin{remark}
The Lipschitz continuity employed by Lemma \ref{lemmawm} may be further relaxed to continuity if the set $\Omega$ is bounded. 
\end{remark}

\begin{theorem}\label{convthmmswm}{\bf (Convergence of the projection algorithm with the MSWM property)}
Assume that the effective delay operator $\Psi$ and the inverse demand function $\Theta$ are Lipschitz continuous with constants $L_1$ and $L_2$ respectively. In addition, assume the operator $\Psi$ satisfies the MSWM conditions; i.e., there exists a non-empty set $\mathcal{P}^{sm}\subset\mathcal{P}$ (`$sm$' stands for strongly monotone) such that the operator is strongly monotone on these subset of paths:
\begin{equation}\label{MSWMstrongcond}
\sum_{p\in\mathcal{P}^{sm}}\int_{t_0}^{t_f} \big(\Psi_p(t,\,h^{k+1})-\Psi_p(t,\,h^k)\big)\cdot\big(h^{k+1}_p(t) -h^k_p(t)\big)\,dt  ~\geq~K^{sm}\sum_{p\in\mathcal{P}^{sm}} \left\|h^{k+1}_p-h^k_p\right\|^2\quad\forall k~\geq~0
\end{equation}
for some $K^{sm}>0$. For the rest of the paths, the weak monotonicity holds:
\begin{equation}\label{MSWMweakcond}
\sum_{q\in\mathcal{P}\setminus\mathcal{P}^{sm}}\int_{t_0}^{t_f}\big(\Psi_q(t,\, h^{k+1})-\Psi_q(t,\,h^k)\big)\cdot\big(h^{k+1}_q(t) - h^k_q(t) \big)\,dt~\geq~ -K^{wm} \sum_{q\in\mathcal{P}\setminus\mathcal{P}^{sm}}\left\|h^{k+1}_q-h^k_q\right\|^2 \quad\forall k~\geq~0 
\end{equation}
for some $K^{wm}>0$ (`$wm$' stands for weakly monotone). In addition, assume there exists a constant $M>0$ such that
\begin{equation}\label{MSWMcond1}
M \sum_{p\in\mathcal{P}^{sm}}\left\|h_p^{k+1}-h_p^k\right\|^2~\geq~\sum_{q\in\mathcal{P}\setminus\mathcal{P}^{sm}}\left\|h_q^{k+1}-h_q^k\right\|^2
\end{equation}
\noindent Finally, assume that the function $\Theta^-\doteq-\Theta$ is strongly monotone with constant $K_2>0$. Then the sequence $\{h^k\}$ generated by the projection algorithm converges to the solution of the VI \eqref{viextend} provided that 
$$
K^{sm}-K^{wm}M~>~0
$$
\end{theorem}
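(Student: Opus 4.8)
The plan is to show that the projection map $X\mapsto P_{\widetilde\Lambda}\big[X-\alpha\mathcal{F}(X)\big]$ contracts the distance between consecutive iterates, so that $\{X^k\}$ is Cauchy in $E$ and its limit is a fixed point of that map, which by the standard characterization of variational inequalities is exactly a solution of $VI\big(\mathcal{F},[t_0,t_f]\big)$. I would first record the ingredients: $\widetilde\Lambda$ is closed and convex, the minimum-norm projection $P_{\widetilde\Lambda}$ (well-defined and unique by Theorem \ref{thmexplicitproj}) is non-expansive in $\|\cdot\|_E$, and $X^*\in\widetilde\Lambda$ solves \eqref{viextend} if and only if $X^*=P_{\widetilde\Lambda}\big[X^*-\alpha\mathcal{F}(X^*)\big]$ for any $\alpha>0$. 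Crucially, because $\left<\cdot,\cdot\right>_E$ splits as a direct sum of the $\big(L^2\big)^{|\mathcal{P}|}$ inner product on the $h$-component and the Euclidean inner product on the $Q$-component, and $\mathcal{F}(h,Q)=\big(\Psi(h),\Theta^-[Q]\big)$ acts componentwise, every estimate decouples cleanly into a $\Psi/h$ part and a $\Theta^-/Q$ part.

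Applying non-expansiveness to \eqref{projection} at indices $k$ and $k-1$ and expanding the square gives the basic recursion
\[
\|X^{k+1}-X^k\|_E^2~\leq~\|X^k-X^{k-1}\|_E^2-2\alpha\left<\mathcal{F}(X^k)-\mathcal{F}(X^{k-1}),\,X^k-X^{k-1}\right>_E+\alpha^2\|\mathcal{F}(X^k)-\mathcal{F}(X^{k-1})\|_E^2.
\]
I would bound the last term by Lipschitz continuity, $\|\mathcal{F}(X^k)-\mathcal{F}(X^{k-1})\|_E^2\leq\max(L_1^2,L_2^2)\,\|X^k-X^{k-1}\|_E^2$, and split the cross term as $\left<\Psi(h^k)-\Psi(h^{k-1}),h^k-h^{k-1}\right>+\left<\Theta^-[Q^k]-\Theta^-[Q^{k-1}],Q^k-Q^{k-1}\right>$, where the second summand is at least $\mu_\Theta\|Q^k-Q^{k-1}\|^2$ by the assumed strong monotonicity (modulus $\mu_\Theta$) of $\Theta^-=-\Theta$.

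The crux is the first summand, the monotonicity of $\Psi$ along the iterates, which is only mixed. Writing the index-shifted versions of \eqref{MSWMstrongcond} and \eqref{MSWMweakcond}, adding them, and using \eqref{MSWMcond1} to absorb the negative weakly-monotone contribution into the strongly-monotone one yields
\[
\left<\Psi(h^k)-\Psi(h^{k-1}),\,h^k-h^{k-1}\right>~\geq~\big(K^{sm}-K^{wm}M\big)\sum_{p\in\mathcal{P}^{sm}}\|h_p^k-h_p^{k-1}\|_{L^2}^2~\geq~\frac{K^{sm}-K^{wm}M}{1+M}\,\|h^k-h^{k-1}\|^2,
\]
where the final inequality again invokes \eqref{MSWMcond1} to dominate the full $h$-norm by its $\mathcal{P}^{sm}$-restricted part. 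This is an \emph{effective} strong monotonicity of $\Psi$ along the sequence. I expect this step to be the main obstacle: the bound is useful only when the effective constant $\tfrac{K^{sm}-K^{wm}M}{1+M}$ is strictly positive, i.e. $K^{sm}>K^{wm}M$. Establishing (or imposing) this positivity is precisely what renders the weakly-monotone directions harmless; without it $\Psi$ need only be weakly monotone overall and the iteration can fail to contract. I would flag this positivity condition explicitly as the hypothesis that the combination of \eqref{MSWMstrongcond}--\eqref{MSWMcond1} is intended to guarantee.

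Setting $\mu\doteq\min\!\big(\tfrac{K^{sm}-K^{wm}M}{1+M},\,\mu_\Theta\big)>0$ and $L\doteq\max(L_1,L_2)$, the decoupled lower bounds combine (again via the direct-sum structure of $\|\cdot\|_E$) into $\left<\mathcal{F}(X^k)-\mathcal{F}(X^{k-1}),X^k-X^{k-1}\right>_E\geq\mu\|X^k-X^{k-1}\|_E^2$, so the recursion becomes $\|X^{k+1}-X^k\|_E^2\leq(1-2\alpha\mu+\alpha^2L^2)\|X^k-X^{k-1}\|_E^2$. Choosing the step size $\alpha\in(0,\,2\mu/L^2)$ makes $\rho\doteq(1-2\alpha\mu+\alpha^2L^2)^{1/2}<1$, hence $\|X^{k+1}-X^k\|_E\leq\rho\,\|X^k-X^{k-1}\|_E$. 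A geometric-series argument then shows $\{X^k\}$ is Cauchy, so it converges to some $\bar X\in\widetilde\Lambda$ by closedness of $\widetilde\Lambda$; passing to the limit in \eqref{projection}, using continuity of $\mathcal{F}$ and of $P_{\widetilde\Lambda}$, gives $\bar X=P_{\widetilde\Lambda}\big[\bar X-\alpha\mathcal{F}(\bar X)\big]$, so $\bar X$ solves \eqref{viextend} and is the claimed limit.
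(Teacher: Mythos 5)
Your proposal is correct and follows essentially the same route as the paper's own proof in Appendix \ref{secappmswmthm}: non-expansiveness of $P_{\widetilde\Lambda}$, decoupling of $\left<\mathcal{F}(X^{k+1})-\mathcal{F}(X^k),X^{k+1}-X^k\right>_E$ into the $\Psi$ and $\Theta^-$ parts, absorption of the weakly monotone directions via \eqref{MSWMcond1} to obtain the effective modulus $(K^{sm}-K^{wm}M)/(M+1)$, the Lipschitz bound $\max\{L_1^2,L_2^2\}$, and the step-size restriction yielding a contraction. Your explicit flagging of the condition $K^{sm}>K^{wm}M$ matches the remark the paper places immediately after the theorem, and your closing Cauchy-sequence/fixed-point argument is just an unpacking of the paper's appeal to the contraction mapping theorem.
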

\begin{proof}
The proof is postponed until \ref{secappmswmthm}.
\end{proof}

\begin{remark}
According to Theorem \ref{convthmmswm}, a necessary condition for the convergence of the projection algorithm is $K^{sm}-K^{wm}M>0$. In other words, the degree of strong monotonicity associated with the subset $\mathcal{P}^{sm}$ must dominate the degree of weak monotonicity associated with the rest of the components. It is quite desirable to identify a set of such paths in a given network based on network topology, path characteristics etc., to meet the sufficient condition for the projection algorithm. Moreover, while the MSWM property may not hold universally for all points in the feasible set $\widetilde\Lambda$, it is more likely to be satisfied at a local level, especially when the trajectory created by the projection algorithm enters certain region. 

Furthermore, the strong monotonicity condition on $\Theta^-=-\Theta$ is satisfied by many inverse demand functions considered in the literature, which are mostly monotonically decreasing (see Table \ref{networkinfo} for an example).
\end{remark}

\subsection{The self-adaptive projection method}\label{subsecsaprojection}

The second computational method is a self-adaptive projection method proposed originally by \cite{HL2002} for generic variational inequalities. Such a method is applied here for computing E-DUEs based on the variational inequality formulation \eqref{viextend}. As we shall see, this method relies on pseudo monotonicity of the delay operator in order to converge.

We begin with some basic notations required to articulate the self-adaptive projection method. As before, $P_{\widetilde\Lambda}[\cdot]$ denotes the projection onto the set $\widetilde\Lambda$. We define the {\it residual}: 
\begin{equation}\label{sapresidual}
r(X; \, \beta)~\doteq~X-P_{\widetilde\Lambda}\big[X-\beta\,\mathcal{F}(X)\big]\qquad X\in\widetilde\Lambda, \quad \beta~>~0
\end{equation}
\noindent where the projection is explicitly given by Theorem \ref{thmexplicitproj}. Notice that the residual is zero if and only if $X$ is a solution of the VI. Given $\alpha,\,\beta>0$, let 
\begin{align}\label{sapd}
d(X;\, \alpha,\beta)~\doteq~&\alpha r(X; \, \beta)+\beta\mathcal{F}\big(X-\alpha r(X; \,\beta)\big)
\\
\label{sapg}
g(X; \, \alpha, \beta)~\doteq~&\alpha\left[r(X;\, \beta) -\beta\Big(\mathcal{F}(X)-\mathcal{F}(X-\alpha r(X; \,\beta))\Big)\right]
\\
\rho(X;\, \alpha, \beta)~\doteq~&{\left<r(X; \,\beta)~,~ g(X; \,\alpha, \beta)\right>_{E}\over \big\|d(X; \,\alpha, \beta)\big\|_{E}^2}
\end{align}

\begin{framed}
\noindent {\bf Self-adaptive projection algorithm}
\begin{description} 

\item[Step 0] Choose fixed parameters $\mu\in (0,\,1),\,\gamma\in(0,\,2),\,\theta>1$, and $L\in(0,\,1)$. Let $\epsilon>0$ be the termination threshold. Identify an initial feasible point $X^0=(h^0,\,Q^0)\in\widetilde\Lambda$ and set iteration counter $k=0$. Let $\alpha_k=1$.

\item[Step 1] Set $\beta_k=\min\{1,\,\theta \alpha_k\}$. Compute the residual $r(X^k; \,\beta_k)$ according to \eqref{sapresidual}. If $\left\|r(X^k; \,\beta_k)\right\|_{E}/ \left\|X^k\right\|_E \leq \epsilon$, terminate the algorithm; otherwise, continue to Step 2.

\item[Step 2] Find the smallest non-negative integer $m_k$ such that $\alpha_{k+1}=\beta_k \mu^{m_k}$ satisfies
\begin{equation}\label{algfindint}
\beta_k\left\|\mathcal{F}(X^k)-\mathcal{F}\big(X^k-\alpha_{k+1}r(X^k;\, \beta_k)\big)\right\|_{E}~\leq~L\left\|r(X^k; \,\beta_k)\right\|_{E}
\end{equation}

\item[Step 3] Compute 
\begin{equation}
X^{k+1}~=~P_{\widetilde\Lambda}\big[ X^k-\gamma\rho(X^k; \,\alpha_{k+1}, \beta_k)d(X^k; \, \alpha_{k+1}, \beta_k)  \big]
\end{equation}
Set $k=k+1$ and go to Step 1.

 \end{description}
 \end{framed}

\eqref{algfindint} requires evaluation of $\mathcal{F}$ at the point $X^k-\alpha_{k+1} r(X^k; \,\beta_k)$. We show that this point always belongs to $\widetilde\Lambda$, the domain of $\mathcal{F}$. Notice that $r(X^k;\, \beta_k)=X^k-P_{\widetilde\Lambda}[X^k-\beta_k \mathcal{F}(X^k)]$, thus 
$$
X^k-\alpha_{k+1} r(X^k; \beta_k)~=~(1-\alpha_{k+1})X^k+\alpha_{k+1}P_{\widetilde\Lambda}[X^k-\beta_k \mathcal{F}(X^k)]\in \widetilde\Lambda
$$
\noindent since both $X^k$ and $P_{\widetilde\Lambda}[X^k-\beta_k\mathcal{F}(X^k)]$ belong to the convex set $\widetilde\Lambda$.

In Step 2 of the algorithm, one is required to test a range of integers, starting from zero, in order to find the smallest integer $m_k$. We show below that such a procedure can always terminate within finite number of trials. Assume that $\mathcal{F}$ is a continuous operator \footnote{Continuity of the operator $\mathcal{F}$ is stated as one of the conditions for solution existence and algorithm convergence.} and observe that $\alpha_{k+1} \to 0$ as $m_k \to +\infty$. There must exist $N>0$ such that for every $m_k>N$ there holds
$$
\left\|\mathcal{F}(X^k)-\mathcal{F}(X^k-\alpha_{k+1}r(X^k; \beta_k))\right\|_{E}~\leq~{L\epsilon\over \beta_k}~\leq~{L\left\|r(X^k; \,\beta_k)\right\|_{E}\over \beta_k}
$$
\noindent which is \eqref{algfindint}. In case $m_k>1$, the algorithm requires more than one evaluation of the operator (that is, more than one dynamic network loading procedure) within one iteration, which is less efficient than the projection algorithm. However, as we  subsequently show, convergence of such an algorithm relies on a weaker notion than strong monotonicity, which the projection algorithm requires.

\begin{definition}{\bf (Pseudo monotone)}\label{defpseudo}
The operator $\mathcal{F}$ is pseudo monotone if, for arbitrary $X^1,\,X^2\in\widetilde\Lambda$, the following holds:
\begin{equation}\label{Phipseudodef}
\left<\mathcal{F}(X^2),\,X^1-X^2\right>_{E}~\geq~0 \quad \hbox{implies}\quad \left<\mathcal{F}(X^1),\,X^1-X^2\right>_{E}~\geq~0
\end{equation}
\end{definition}

\noindent By definition, pseudo monotonicity is a consequence of monotonicity, and thus is one type of generalized monotonicity \citep{PS1997}.  The convergence of the proposed projection algorithm requires the following property of the principal operator:
\begin{equation}\label{Phiconvcond}
\left<\mathcal{F}(X),\, X-X^*\right>_{E}~\geq~0\qquad\forall X\in \widetilde\Lambda
\end{equation}
\noindent where $X^*$ is a solution of the original VI \eqref{viextend}. Notice that \eqref{Phiconvcond} follows from monotonicity or pseudo monotonicity of $\mathcal{F}$, and thus is weaker than these two monotonicity conditions.  The following convergence proof is due to \cite{HL2002}.

\begin{theorem}\label{thmprojalgconv}{\bf (Convergence of the self-adaptive projection method)}. Assume that $\mathcal{F}:\,\widetilde\Lambda\to E$ is continuous and satisfies \eqref{Phiconvcond}. Then the sequence $\{X^k\}$ generated by the self-adaptive projection algorithm converges to a solution of the VI \eqref{viextend}.
\end{theorem}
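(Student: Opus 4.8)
The plan is to establish that the algorithm is Fej\'er monotone with respect to the solution set of $VI\big(\mathcal{F},[t_0,t_f]\big)$: for an arbitrary solution $X^*$ (which exists and satisfies \eqref{Phiconvcond} by hypothesis) the distance $\|X^k-X^*\|_E$ is nonincreasing and decreases by a controllable amount at each non-terminating step. Writing $r^k=r(X^k;\beta_k)$, $d^k=d(X^k;\alpha_{k+1},\beta_k)$, $g^k=g(X^k;\alpha_{k+1},\beta_k)$ and $\rho_k=\rho(X^k;\alpha_{k+1},\beta_k)$, the single estimate driving everything is
\[
\|X^{k+1}-X^*\|_E^2~\le~\|X^k-X^*\|_E^2-\gamma(2-\gamma)\,\rho_k\langle r^k,g^k\rangle_E.
\]
Because $\gamma\in(0,2)$ and (as shown below) $\rho_k\langle r^k,g^k\rangle_E>0$ off the solution set, this yields at once boundedness of $\{X^k\}$, convergence of $\{\|X^k-X^*\|_E\}$, and summability $\sum_k\rho_k\langle r^k,g^k\rangle_E<\infty$, from which the conclusion is extracted.

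The estimate rests on three building blocks. First, from the definition \eqref{sapg} of $g$, Cauchy--Schwarz, and the backtracking test \eqref{algfindint} defining $\alpha_{k+1}$, I obtain $\langle r^k,g^k\rangle_E\ge\alpha_{k+1}(1-L)\|r^k\|_E^2$; since $L\in(0,1)$ this is strictly positive whenever $r^k\ne0$, so $\rho_k$ is well defined and positive. Second, and this is the analytic core, I would prove the key inequality $\langle d^k,X^k-X^*\rangle_E\ge\langle r^k,g^k\rangle_E$. The derivation decomposes $X^k-X^*$ through the intermediate point $\bar X^k=X^k-\alpha_{k+1}r^k$ (which lies in $\widetilde\Lambda$, as already verified before the theorem), applies \eqref{Phiconvcond} at $\bar X^k$ to discard the nonnegative term $\langle\mathcal{F}(\bar X^k),\bar X^k-X^*\rangle_E$, and lower-bounds $\langle r^k,X^k-X^*\rangle_E$ through the variational characterization of the projection that defines $r^k$ in \eqref{sapresidual} together with \eqref{Phiconvcond} applied at $X^k$; collecting terms reproduces exactly $\langle r^k,g^k\rangle_E$ by \eqref{sapd}--\eqref{sapg}. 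Third, feeding $z=X^k-\gamma\rho_k d^k$ and $x=X^*$ into the standard projection inequality $\|P_{\widetilde\Lambda}[z]-x\|_E^2\le\|z-x\|_E^2-\|P_{\widetilde\Lambda}[z]-z\|_E^2$, expanding the square, invoking the key inequality, and using $\rho_k\|d^k\|_E^2=\langle r^k,g^k\rangle_E$ produces the Fej\'er estimate.

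With the estimate in hand, $\{X^k\}$ is bounded; continuity of $\mathcal{F}$ along this bounded orbit bounds $\|d^k\|_E\le C$, so $\rho_k\langle r^k,g^k\rangle_E=\langle r^k,g^k\rangle_E^2/\|d^k\|_E^2\ge C^{-2}(1-L)^2\alpha_{k+1}^2\|r^k\|_E^4$, and summability forces $\alpha_{k+1}\|r^k\|_E^2\to0$. I would then argue that the residuals vanish along a subsequence, extract a cluster point $X^\infty=\lim_j X^{k_j}$, show via continuity of $\mathcal{F}$ and of the projection residual that $r(X^\infty;\beta)=0$ and hence that $X^\infty$ solves \eqref{viextend}, and finally replace $X^*$ by $X^\infty$ in the Fej\'er estimate: $\{\|X^k-X^\infty\|_E\}$ is then nonincreasing with a subsequence tending to $0$, so the whole sequence converges to $X^\infty$.

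The hard part will be this last extraction, precisely because the step sizes are adaptive. The relation $\alpha_{k+1}\|r^k\|_E^2\to0$ does not give $\|r^k\|_E\to0$ if $\alpha_{k+1}\to0$; worse, $r^k$ is measured at the varying scale $\beta_k=\min\{1,\theta\alpha_k\}$, which may itself degenerate and would then drive $r^k\to0$ for reasons unrelated to optimality. The plan is to rule this out by exploiting the failure of the backtracking test at exponent $m_k-1$ whenever $m_k\ge1$, namely $\beta_k\|\mathcal{F}(X^k)-\mathcal{F}(X^k-(\alpha_{k+1}/\mu)r^k)\|_E>L\|r^k\|_E$: since the trial step tends to zero on the bounded orbit, (uniform) continuity of $\mathcal{F}$ forces the left side to be small relative to $\|r^k\|_E$ unless $\|r^k\|_E$ is itself small, which, combined with the monotonicity of the projection residual in $\beta$ (that $\|r(X;\beta)\|_E$ is nondecreasing and $\|r(X;\beta)\|_E/\beta$ nonincreasing in $\beta$), transfers smallness at scale $\beta_k$ to a fixed scale and certifies the cluster point as a genuine solution. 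This is the delicate step-size bookkeeping of \cite{HL2002}; the added subtlety here is that the orbit lives in the infinite-dimensional space $E$, where boundedness does not supply compactness, so producing the convergent subsequence $X^{k_j}$ is exactly where the extra care is needed.
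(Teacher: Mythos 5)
Your outline reproduces, essentially step for step, the argument of \cite{HL2002}: the paper itself offers no proof of this theorem beyond the citation ``See \cite{HL2002}'', so the relevant comparison is against that source. The skeleton you give is the right one and its finite-dimensional ingredients are correct: the lower bound $\langle r^k,g^k\rangle_E\ge\alpha_{k+1}(1-L)\|r^k\|_E^2$ follows from \eqref{sapg}, Cauchy--Schwarz and the backtracking test \eqref{algfindint}; the identity $\rho_k\|d^k\|_E^2=\langle r^k,g^k\rangle_E$ is definitional; and the key inequality $\langle d^k,X^k-X^*\rangle_E\ge\langle r^k,g^k\rangle_E$ does come out of splitting $X^k-X^*$ through $\bar X^k=X^k-\alpha_{k+1}r^k$, applying \eqref{Phiconvcond} at $\bar X^k$, and using the variational characterization of $P_{\widetilde\Lambda}$ at $X^k$. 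The Fej\'er estimate and the summability consequence then follow as you say.

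The gap is the one you name in your final paragraph, and flagging it is not the same as closing it. In $E=\big(L^2[t_0,t_f]\big)^{|\mathcal{P}|}\times\mathbb{R}^{|\mathcal{W}|}$ a bounded orbit yields only a \emph{weakly} convergent subsequence $X^{k_j}\rightharpoonup X^\infty$. Norm-continuity of $\mathcal{F}$ (the stated hypothesis) gives nothing along a weakly convergent subsequence: you cannot conclude $\mathcal{F}(X^{k_j})\to\mathcal{F}(X^\infty)$, hence cannot pass to the limit in the residual to certify $r(X^\infty;\beta)=0$; and, equally fatal, the final step needs $\|X^{k_j}-X^\infty\|_E\to0$ \emph{strongly} along the subsequence before Fej\'er monotonicity of $\|X^k-X^\infty\|_E$ can force convergence of the whole sequence. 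Repairing this requires an additional hypothesis --- for instance a weak-to-strong (completely continuous) property of $\mathcal{F}$ in the spirit of {\bf (A3)}, or a restriction to a finite-dimensional subset such as $\widetilde\Lambda_n$ from the existence proof --- neither of which appears in the theorem statement. As written, both your proposal and the paper's citation-only proof transplant a finite-dimensional compactness argument into an infinite-dimensional space without justification; your step-size bookkeeping for $\beta_k$ is the correct concern for the finite-dimensional part of the argument, but it sits downstream of this unresolved compactness issue.
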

\begin{proof}
See \cite{HL2002} for a proof. 
\end{proof}

\subsection{The proximal point method}

The {\it proximal point method} (PPM) \citep{Konnov} is a popular method for solving optimization problems and variational inequalities. It replaces the original problem with a sequence of regularized problems, each of which can be solved with standard algorithms due to improved regularity. The PPM is known to converge with some generalized monotonicity \citep{Allevi}. In this paper we apply the PPM to solve E-DUE problems while advancing our knowledge of the PPM by proposing a set of new convergence conditions weaker than those from previous studies.  The proximal point method has been further developed in this paper and is summarized below.

\begin{framed}
\noindent {\bf Proximal point method}
\begin{description} 

\item[Step 0] Identify an initial feasible point $X^0=(h^0,\,Q^0)\in\widetilde\Lambda$. Fix a large constant $a>0$ and set a tolerance parameter $\delta>0$. Set the iteration counter $k=0$. 

\item[Step 1] Solve the following variational inequality for $X^{k+1}=(h^{k+1},\,Q^{k+1})$: 
\begin{equation}\label{ppmvi}
\left<\mathcal{F}(X^{k+1}) + a(X^{k+1}-X^k)~,~  X - X^{k+1}\right>_E~\geq~0\qquad\forall X\in\widetilde\Lambda
\end{equation}

\item[Step 2] Terminate the algorithm if $\|X^{k+1}-X^k\|_E \leq {\delta\over a D}$, where $D$ is the diameter of the set $\widetilde\Lambda$.  Otherwise, set $k=k+1$ and repeat Step 1 through Step 2.

 \end{description}
 \end{framed}

\noindent The key step of the PPM is to solve the VI \eqref{ppmvi}, which  enjoys a significantly improved regularity than the original VI problem. To see this, we rewrite $\mathcal{F}(X^{k+1}) + a(X^{k+1}-X^k)$  as $(\mathcal{F}+a I)(X^{k+1}) -a X^k$, where $I$ is the identity map. If $\mathcal{F}$ is weakly monotone with constant $-K$ (see \eqref{weakmonotonedef}), then $(\mathcal{F}+a I)(X^{k+1}) -a X^k$ is a strongly monotone operator acting on $X^{k+1}$ provided that $a>K$. Thus, by choosing $a$ large enough, the VI \eqref{ppmvi} can be solved with any existing algorithm with satisfactory convergence result.

We now present a new convergence result for the proximal point method. We begin with the articulation of the {\it dual formulation} of the VI \eqref{viextend}. 
\begin{definition}{\bf (Dual formulation of the VI)}
The dual form of the VI \eqref{viextend}, also known as the Minty problem, is defined as follows. Find $X^d\in\widetilde\Lambda$ such that 
\begin{equation}\label{minty}
\left<\mathcal{F}(X),\, X-X^d\right>_{E}~\geq~0\qquad\forall X\in\widetilde\Lambda
\end{equation}
We let $Y^d$ be the solution set of \eqref{minty}. 
\end{definition}

\begin{lemma}\label{ppmconvthm}
Assume that $Y^d\neq\emptyset$, and that $\widetilde\Lambda$ is bounded with diameter $D<\infty$.  Then the sequence $\{X^k\}$ generated by the proximal point method satisfies the following: for any $k\geq 1$, 
\begin{equation}\label{Liueqn1}
\left<\mathcal{F}(X^{\mu(k)+1})~,~ X- X^{\mu(k)+1}\right>_E~\geq~ - {a D^2\over \sqrt{k+1}} \qquad\forall X\in\widetilde\Lambda
\end{equation}
where $\mu(k)\doteq \underset{0\leq i\leq k}{\text{argmin}}\,\big\|X^i -X^{i+1}\big\|_E^2 ~\in~\{0,\,\ldots,\, k\}$.   
\end{lemma}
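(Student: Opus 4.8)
The plan is to establish a Fejér-type contraction of the iterates toward any dual (Minty) solution, telescope it to show that the smallest step $\|X^{i+1}-X^i\|_E$ over $0\le i\le k$ decays like $1/\sqrt{k+1}$, and then feed this small-step estimate back into the defining variational inequality \eqref{ppmvi} of the proximal step. The nonemptiness of $Y^d$ is what makes this work without any monotonicity of $\mathcal{F}$.

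First I would fix an arbitrary dual solution $X^d\in Y^d$ (available since $Y^d\neq\emptyset$) and test \eqref{ppmvi} at iteration $k$ with $X=X^d$, obtaining
$$\big<\mathcal{F}(X^{k+1})+a(X^{k+1}-X^k),\,X^d-X^{k+1}\big>_E~\geq~0.$$
The crucial move is to cancel the operator term: since $X^d$ solves the Minty problem \eqref{minty}, one has $\big<\mathcal{F}(X^{k+1}),\,X^{k+1}-X^d\big>_E\geq 0$, i.e. $\big<\mathcal{F}(X^{k+1}),\,X^d-X^{k+1}\big>_E\leq 0$, and subtracting this leaves the purely geometric inequality $\big<X^{k+1}-X^k,\,X^d-X^{k+1}\big>_E\geq 0$. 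Applying the polarization identity $2\big<X^{k+1}-X^k,\,X^d-X^{k+1}\big>_E=\|X^d-X^k\|_E^2-\|X^d-X^{k+1}\|_E^2-\|X^{k+1}-X^k\|_E^2$ then yields the one-step bound $\|X^{k+1}-X^k\|_E^2\leq\|X^d-X^k\|_E^2-\|X^d-X^{k+1}\|_E^2$.

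Next I would sum this over $i=0,\dots,k$; the right-hand side telescopes to $\|X^d-X^0\|_E^2-\|X^d-X^{k+1}\|_E^2\leq\|X^d-X^0\|_E^2\leq D^2$, using $X^0,X^d\in\widetilde\Lambda$ and $\diam\widetilde\Lambda=D$. Hence $(k+1)\min_{0\le i\le k}\|X^{i+1}-X^i\|_E^2\leq\sum_{i=0}^{k}\|X^{i+1}-X^i\|_E^2\leq D^2$, so by the definition of $\mu(k)$ as the argmin we get $\|X^{\mu(k)+1}-X^{\mu(k)}\|_E\leq D/\sqrt{k+1}$. Finally I would return to \eqref{ppmvi} at iteration $\mu(k)$, move the regularization term to the right to obtain $\big<\mathcal{F}(X^{\mu(k)+1}),\,X-X^{\mu(k)+1}\big>_E\geq -a\big<X^{\mu(k)+1}-X^{\mu(k)},\,X-X^{\mu(k)+1}\big>_E$, and bound the right-hand side by Cauchy-Schwarz together with $\|X-X^{\mu(k)+1}\|_E\leq D$ and the step estimate, producing the claimed $-aD^2/\sqrt{k+1}$.

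The main obstacle is the sign bookkeeping in the first step. The entire argument hinges on testing against a \emph{dual} solution $X^d$ rather than a primal one, precisely so that the Minty inequality \eqref{minty} cancels the possibly nonmonotone term $\mathcal{F}(X^{k+1})$ and reduces everything to a geometric contraction; orienting that cancellation and the subsequent polarization identity correctly is the delicate part, and it is also what lets the conclusion hold under the very weak hypothesis $Y^d\neq\emptyset$. Once the one-step bound is in place, the telescoping and the closing Cauchy-Schwarz estimate are routine.
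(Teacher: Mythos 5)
Your proposal is correct and follows essentially the same route as the paper's proof: test the proximal VI at a Minty solution $X^d$ to get $\big<X^{k+1}-X^k,\,X^d-X^{k+1}\big>_E\geq 0$, convert that via the norm expansion identity into the telescoping bound $\|X^{k+1}-X^k\|_E^2\leq\|X^k-X^d\|_E^2-\|X^{k+1}-X^d\|_E^2$, sum to get $\|X^{\mu(k)+1}-X^{\mu(k)}\|_E\leq D/\sqrt{k+1}$, and close with Cauchy--Schwarz in the proximal VI at iteration $\mu(k)$. All steps, including the sign bookkeeping you flag as delicate, match the paper's argument.
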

\begin{proof}
The proof is postponed until \ref{secappppm}.
\end{proof}

\begin{remark}
We note that the feasible set $\widetilde\Lambda$ defined originally for continuous-time functions is unbounded. However, in numerical (discrete-time) computations the feasible set $\widetilde\Lambda$ will be bounded with a finite diameter $D<\infty$.
\end{remark}

As an immediate consequence of Lemma \ref{ppmconvthm}, we have the following convergence result.

\begin{theorem}{\bf (Convergence of the proximal point method)}\label{ppmthm}
Assume that $Y^d\neq \emptyset$, and that $\widetilde\Lambda$ is bounded with diameter $D<\infty$. Then for any tolerance $\delta>0$, there exists $R\doteq \lceil{a^2D^4\over \delta^2}\rceil-1$, such that 
\begin{equation}\label{ppmconveqn1}
\left<\mathcal{F}(X^{\mu(R)+1})~,~X-X^{\mu(R)+1}\right>_E~\geq~-\delta \qquad \forall X\in\widetilde\Lambda,
\end{equation}
where $\{X^k\},\,k\geq 0$ is the sequence generated by the proximal point method, $a$ is the number appearing in \eqref{ppmvi}, and $\lceil z\rceil$ denotes the smallest integer that is larger than or equal to $z$. Moreover, when the PPM algorithm terminates, i.e. when $\|X^{k+1}-X^k\|_E\leq {\delta\over aD}$ for the first time, then
\begin{equation}\label{ppmconveqn2}
\left<\mathcal{F}(X^{k+1})~,~X-X^{k+1}\right>_E~\geq~-\delta \qquad\forall X\in\widetilde\Lambda
\end{equation}
\end{theorem}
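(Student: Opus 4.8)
The plan is to derive both assertions directly from Lemma \ref{ppmconvthm}, treating the theorem as a quantitative corollary. All the analytic substance---controlling the running minimum $\mu(k)$ of successive displacements and the telescoping energy estimate that yields the $O(1/\sqrt{k})$ decay---is already contained in that lemma, so here I only need elementary arithmetic together with the diameter bound on $\widetilde\Lambda$.

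For the first inequality \eqref{ppmconveqn1}, I would apply Lemma \ref{ppmconvthm} with $k=R$, which gives
\[
\big<\mathcal{F}(X^{\mu(R)+1}),\, X-X^{\mu(R)+1}\big>_E~\geq~-\frac{aD^2}{\sqrt{R+1}}\qquad\forall X\in\widetilde\Lambda.
\]
It then suffices to verify $aD^2/\sqrt{R+1}\leq\delta$. Since $R=\lceil a^2D^4/\delta^2\rceil-1$, we have $R+1=\lceil a^2D^4/\delta^2\rceil\geq a^2D^4/\delta^2$, whence $\sqrt{R+1}\geq aD^2/\delta$ and therefore $aD^2/\sqrt{R+1}\leq\delta$. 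Substituting this bound yields \eqref{ppmconveqn1} for every $X\in\widetilde\Lambda$.

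For the termination estimate \eqref{ppmconveqn2}, I would return to the defining variational inequality \eqref{ppmvi} for the iterate $X^{k+1}$ and isolate the principal term, obtaining
\[
\big<\mathcal{F}(X^{k+1}),\, X-X^{k+1}\big>_E~\geq~-a\big<X^{k+1}-X^k,\, X-X^{k+1}\big>_E\qquad\forall X\in\widetilde\Lambda.
\]
Applying the Cauchy--Schwarz inequality to the right-hand side together with the diameter bound $\|X-X^{k+1}\|_E\leq D$ produces
\[
\big<\mathcal{F}(X^{k+1}),\, X-X^{k+1}\big>_E~\geq~-a\,\|X^{k+1}-X^k\|_E\,D.
\]
Invoking the termination condition $\|X^{k+1}-X^k\|_E\leq\delta/(aD)$ then bounds the right-hand side below by $-a\cdot(\delta/(aD))\cdot D=-\delta$, which is exactly \eqref{ppmconveqn2}.

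Given the lemma, neither part presents a genuine obstacle: the first is pure ceiling-function arithmetic, and the second is a one-line Cauchy--Schwarz argument anchored at the boundedness of $\widetilde\Lambda$. The only point requiring minor care is the use of the diameter $D$ as a uniform bound on $\|X-X^{k+1}\|_E$, which is legitimate precisely because both $X$ and $X^{k+1}$ lie in $\widetilde\Lambda$; this is where the boundedness hypothesis becomes indispensable. The substantive difficulty of the convergence analysis is entirely absorbed into Lemma \ref{ppmconvthm}, whose proof supplies the decay rate of the optimality residual.
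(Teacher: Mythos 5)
Your proposal is correct and follows essentially the same route as the paper: part \eqref{ppmconveqn1} is Lemma \ref{ppmconvthm} at $k=R$ plus the ceiling-function arithmetic, and part \eqref{ppmconveqn2} is the chain ``defining VI \eqref{ppmvi} $\Rightarrow$ Cauchy--Schwarz $\Rightarrow$ diameter bound $\Rightarrow$ termination criterion,'' which is exactly the inequality \eqref{Liueqn7} the paper invokes (the paper states it at index $\mu(k)$ and observes that the termination criterion forces $k=\mu(k)$, whereas you rederive it directly at index $k$ --- an immaterial difference).
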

\begin{proof}
Replacing $k$ in \eqref{Liueqn1} with $R=\lceil{a^2D^4\over \delta^2}\rceil-1$ yields
$$
\left<\mathcal{F}(X^{\mu(R)+1})~,~X-X^{\mu(R)+1}\right>_E~\geq~ -\delta \qquad\forall X\in\widetilde\Lambda
$$
\noindent which is \eqref{ppmconveqn1}. To show \eqref{ppmconveqn2}, we invoke \eqref{Liueqn7} in the proof of Lemma \ref{ppmconvthm}:
$$
\left<\mathcal{F}(X^{\mu(k)+1})~,~X-X^{\mu(k)+1}\right>_E~\geq~-a D \left\|X^{\mu(k)+1}-X^{\mu(k)}\right\|_E
$$
\noindent According to the definition of $\mu(\cdot)$, the termination criterion of the PPM implies that $k=\mu(k)$. Thus we have that, when the algorithm terminates,
$$
\left<\mathcal{F}(X^{k+1})~,~X-X^{k+1}\right>_E~\geq~-a D \left\|X^{k+1}-X^{k}\right\|_E~\geq~-\delta
$$
\end{proof}

\begin{remark}
Unlike the convergence results established in Section \ref{subsecprojection} and \ref{subsecsaprojection}, which focus on the asymptotic behavior of the sequence $\{X^k\}$ as $k\to\infty$,  the convergence result developed in Theorem \ref{ppmthm} is concerned with finding a solution of the approximate VI (that is, with $-\delta$ on the right hand side instead of zero) within finite iteration. Such a convergence result is quite practical for numerical computations as it estimates the number of iterations needed to achieve certain level of approximation of the original VI.
\end{remark}

Theorem \ref{ppmthm} only requires that the dual VI \eqref{minty} has a solution -- a property subsequently referred to as {\it dual solvability}. Compared to the convergence conditions for the self-adaptive projection method (Section \ref{subsecsaprojection}), dual solvability is weaker than the assumption \eqref{Phiconvcond} as the latter requires that the solution of the original VI must be a solution of the dual VI. In addition, Theorem \ref{ppmthm} does not rely on the continuity of the principal operator. We thus conclude that the convergence conditions for the PPM are indeed weaker than the previous two methods.

In the remainder of this section, we will investigate in detail the dual solvability and provide  sufficient conditions for it. One should note that if the original VI has a solution, then a sufficient condition for dual solvability is pseudo monotonicity; this is apparent from Definition \ref{defpseudo}. In the following presentation, we will articulate a weaker  sufficient condition for dual solvability, based on the notion of semistrictly quasi monotonicity. 

\begin{definition}
The operator $\mathcal{F}$ is quasi monotone if, for arbitrary $X^1,\,X^2\in\widetilde\Lambda$, 
$$
\left<\mathcal{F}(X^2)~,~X^1-X^2\right>_E>0~\Longrightarrow~\left<\mathcal{F}(X^1)~,~X^1-X^2\right>_E\geq0
$$
The operator $\mathcal{F}$ is semistrictly quasi monotone if it is quasi monotone and, for every $X^1,\,X^2\in \widetilde\Lambda$,  
$$
\left<\mathcal{F}(X^2)~,~X^1-X^2\right>_E>0~\Longrightarrow~\left<\mathcal{F}(X^3)~,~X^1-X^2\right>_E>0 
$$
for some $X^3\in \Big\{X:~X=X^1+\lambda(X^2-X^1),~~\lambda\in(0,\,{1\over 2}) \Big\}$.
\end{definition}

\noindent The reader is referred to \cite{Konnov1998} for a detailed discussion of quasi monotonicity. In particular, Lemma 3.1 of \cite{Konnov1998} states that pseudo monotonicity implies semistrictly quasi monotonicity, making the latter a weaker assumption. The notion of $w^*$-hemicontinuity is also needed for the dual solvability. 
\begin{definition}
$\mathcal{F}$ is $w^*$-hemicontinuous if the function 
$$
c(\lambda)~\doteq~\left<\mathcal{F}(X_{\lambda})~,~X_{\lambda} - X^2\right>_E\qquad \hbox{where }~X_{\lambda}=\lambda X^1+(1-\lambda)X^3
$$
is upper semicontinuous at $\lambda=0+$ for all $X^1,\,X^2,\,X^3\in\widetilde\Lambda$ and $\lambda\in[0,\,1]$.
\end{definition}

\noindent Finally, the sufficient condition for dual solvability is summarized below. 
\begin{theorem}{\bf (Sufficient condition for dual solvability)}
If $\mathcal{F}$ is continuous on $\widetilde\Lambda$ and is semistrictly quasi monotone, then the dual problem \eqref{minty} has a solution.
\end{theorem}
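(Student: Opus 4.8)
The plan is to recast dual solvability as a nonemptiness statement and settle it with the KKM (Ky Fan) lemma. For each $Y\in\widetilde\Lambda$ define the set $G(Y)\doteq\{X\in\widetilde\Lambda:\ \langle\mathcal{F}(Y),\,Y-X\rangle_E\geq 0\}$. By the very definition of the Minty problem \eqref{minty}, a point lies in $Y^d$ if and only if it belongs to $G(Y)$ for \emph{every} $Y$, so that $Y^d=\bigcap_{Y\in\widetilde\Lambda}G(Y)$; it therefore suffices to prove this intersection is nonempty. Since norm-compactness fails on the infinite-dimensional factor of $E$, I would run the whole argument in the weak topology, using (as in the proximal-point theorems) the boundedness of $\widetilde\Lambda$ with finite diameter $D$.

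First I would dispatch the two routine hypotheses of the Ky Fan lemma. Each $G(Y)$ is the intersection of $\widetilde\Lambda$ with a half-space of the affine functional $X\mapsto\langle\mathcal{F}(Y),\,Y-X\rangle_E$; because $\mathcal{F}(Y)$ is a fixed element of $E$, this functional is weakly continuous, so $G(Y)$ is weakly closed. As $\widetilde\Lambda$ is bounded, closed and convex in the Hilbert space $E$ (Proposition \ref{propE}, hence reflexive), it is weakly compact, and every weakly closed subset $G(Y)\subseteq\widetilde\Lambda$ is then weakly compact as well. The only remaining hypothesis is that $G$ is a KKM map.

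The crux is precisely this KKM property: for every finite collection $X^1,\dots,X^n\in\widetilde\Lambda$ and every convex combination $Z=\sum_i\lambda_i X^i$, at least one index $i$ must satisfy $\langle\mathcal{F}(X^i),\,Z-X^i\rangle_E\leq 0$ (i.e.\ $Z\in G(X^i)$); this is the statement that $\mathcal{F}$ is ``properly quasi monotone.'' I would argue by contradiction, discarding zero weights so that $\lambda_i>0$, and assume $\langle\mathcal{F}(X^i),\,Z-X^i\rangle_E>0$ for all $i$. Applying quasi monotonicity to each pair $(X^i,Z)$ gives $\langle\mathcal{F}(Z),\,Z-X^i\rangle_E\geq 0$; since $\sum_i\lambda_i(Z-X^i)=0$ with all $\lambda_i>0$, these nonnegative numbers sum to zero and so each vanishes, yielding $\langle\mathcal{F}(Z),\,Z-X^i\rangle_E=0$ for every $i$, hence $\langle\mathcal{F}(Z),\,Y-Z\rangle_E=0$ for all $Y$ in the simplex spanned by the $X^i$. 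It then remains to contradict these equalities using the strict improvement guaranteed by semistrict quasi monotonicity together with the continuity ($w^*$-hemicontinuity) of $\mathcal{F}$: for a fixed $i$ it produces a point $W$ strictly between $Z$ and $X^i$ with $\langle\mathcal{F}(W),\,Z-X^i\rangle_E>0$, which I would feed into the single-variable characterization of quasi monotone maps along the relevant segments (the scalar map $\lambda\mapsto\langle\mathcal{F}(X_\lambda),\,d\rangle_E$, with $X_\lambda$ tracing a segment in direction $d$, cannot pass from a positive to a negative value as $\lambda$ increases). Exploiting that $Z$ lies in the relative interior of the simplex, so that both $d$ and $-d$ are admissible directions, delivers the contradiction; an induction on $n$ lowering the dimension of the simplex handles $n\geq 3$, while $n\leq 2$ already follows from bare quasi monotonicity. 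This step is the main obstacle and is exactly where semistrictness (beyond plain quasi monotonicity) and the continuity assumption are genuinely used; it follows the reasoning of \cite{Konnov1998}.

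With the KKM property established, Ky Fan's lemma gives $\bigcap_{Y\in\widetilde\Lambda}G(Y)\neq\emptyset$, i.e.\ $Y^d\neq\emptyset$, which is precisely the asserted dual solvability and completes the proof.
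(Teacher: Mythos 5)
Your argument is correct and is essentially the paper's own route: the paper's proof consists of observing that continuity in the strong topology implies $w^*$-hemicontinuity and then invoking Theorem 4.1 of \cite{Konnov1998}, whose proof is precisely the Ky Fan/KKM argument you reconstruct (weak compactness of the bounded, closed, convex set $\widetilde\Lambda$, weak closedness of the sets $G(Y)$, and proper quasimonotonicity of semistrictly quasimonotone hemicontinuous maps). The only point worth flagging is that the boundedness of $\widetilde\Lambda$, which you need for weak compactness, is not stated in the theorem itself but is carried over from the standing assumptions of the proximal-point subsection.
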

\begin{proof}
It is easy to verify by definition that if $\mathcal{F}$ is continuous in the strong topology, then it is $w^*$-hemicontinuous. Thus the conclusion follows from Theorem 4.1 of \cite{Konnov1998}.
\end{proof}

\begin{remark}
The three computational methods  proposed here rely on generalized monotonicity in order to converge. As we previously mentioned, most delay operators may not satisfy these generalized notions of monotonicity, with only a few exceptions \citep{Mounce, PR}. Thus the proper perspective on convergence of numerical algorithms for calculating DUEs on general networks is to say that almost all algorithms are presently heuristic. Exact algorithms will be created only when a fundamentally new operator class is invented, which allows non-monotonicity while also providing behavioral insights that allow convergence to be established.
\end{remark}

\section{Numerical examples}\label{secnum}

In this section we numerically test the three computational algorithms and illustrate their solutions. Two test networks are considered: the seven-arc network and the Sioux Falls network as shown in Figure \ref{FigTestnetworks}. Table \ref{networkinfo} shows the number of O-D pairs and paths in the test networks, as well as the inverse demand functions, which are assumed to be the same for all the O-D pairs. In particular, for the seven-arc network one O-D pair $(1,\,6)$ and three paths $p_1=\{I_1,\,I_3,\,I_6,\,I_7\},\,p_2=\{I_1,\,I_2,\,I_4,\,I_6,\,I_7\}, p_3=\{I_1,\,I_2,\,I_5,\,I_7\}$ are considered. In the Sioux Falls network six O-D pairs $(1,\,20),\,(2,\,20),\,(3,\,20),\,(4,\,20),\,(5,\,20)$ and $(6,\,20)$ are considered, among which 119 paths are utilized.

\begin{figure}[h!]
\centering
\includegraphics[width=0.7\textwidth]{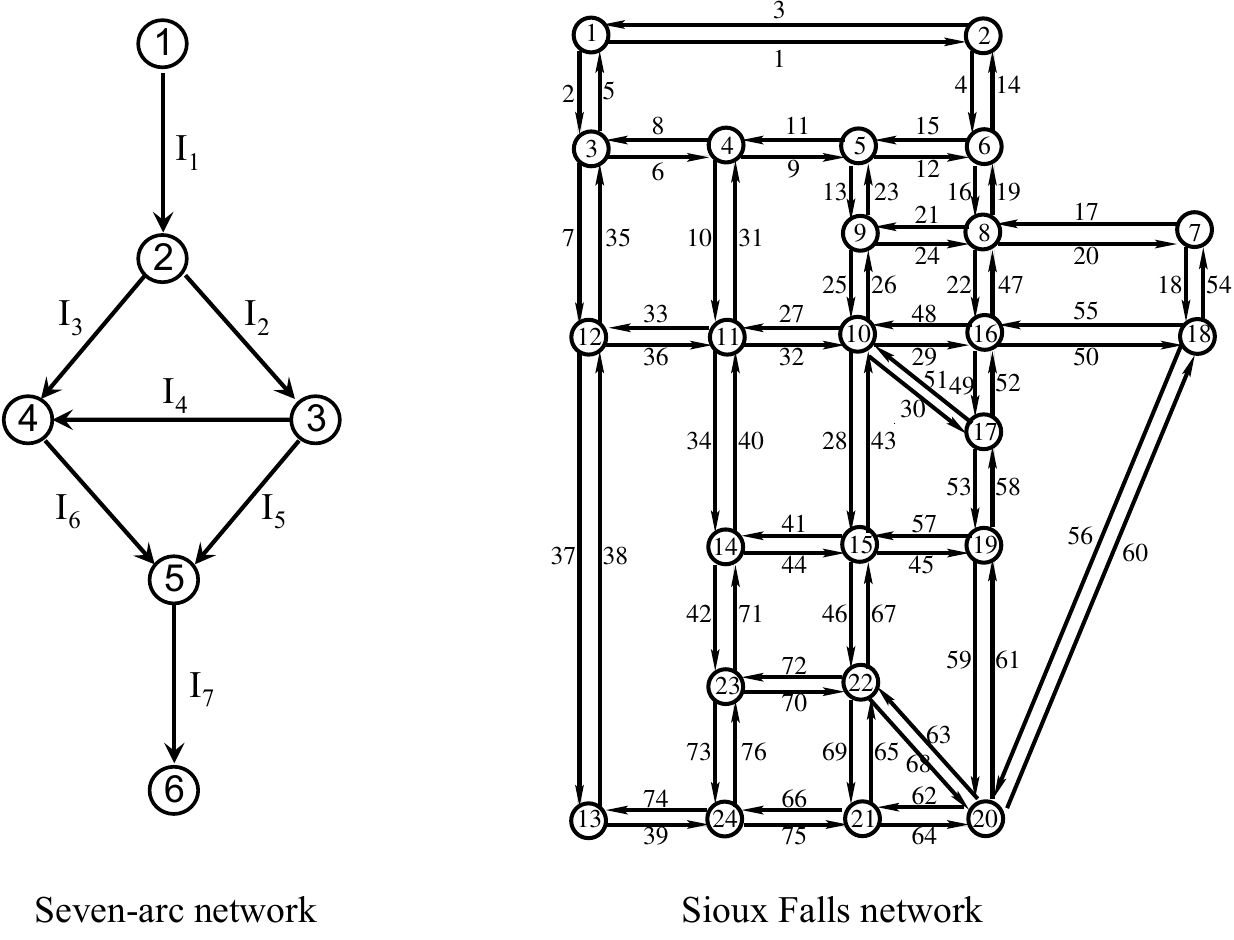}
\caption{The two test networks}
\label{FigTestnetworks}
\end{figure}

\begin{table}[h!]
\center
\begin{tabular}{|c|c|c|c|}
\hline
          &  \# of O-D pairs   &  \# of paths   & Inverse demand function \\\hline
Seven-arc network &   1   &  3   &   $v=-Q/ 2000  + 1.2$     \\\hline
Sioux Falls network &  6   &  119   &  $v=-Q/ 500+1.6$  \\\hline
\end{tabular}
\caption{Basic network information. The same inverse demand function is assumed for all the O-D pairs in the network.}
\label{networkinfo}
\end{table}

The following termination criteria are employed for the three computational algorithms proposed in this paper. For the projection method and the proximal point method, the algorithm is terminated if the relative gap 
\begin{equation}\label{termination1}
{\| X^{k+1} - X^k \|_E \over \| X^k \|_E} ~\leq~10^{-5}
\end{equation}
where $X^k\doteq (h^k,\,Q^k)$. For the self-adaptive projection method, the termination criterion is 
\begin{equation}\label{termination2}
{\| r(X^k;\,\beta_k)\|_E\over \|X^k\|_E}~\leq~ 10^{-6}
\end{equation}
where the residual $r(X^k;\,\beta_k)$ is given in \eqref{sapresidual}. These termination criteria will be adopted for the computations on both test networks.

Regarding the dynamic network loading sub-problem, we employ the link transmission model \citep{LTM}, which is a discrete-time and simplified version of the Lighthill-Whitham-Richards model \citep{LW, Richards}. It is based on a triangular fundamental diagram and propagates traffic on a link through Newell's variational principle \citep{Newella}. Due to space limitation we will not elaborate this DNL procedure here and instead refer the reader to \cite{LTM} for the original model and \cite{LKWM} for more elaborated discussion. All the computations reported here were performed on a standard laptop with 4 GB of RAM.

\subsection{Computational results on the severn-arc network}

We apply the projection method (PM), the self-adaptive projection method (SA), and the proximal point method (PP) to solve the elastic DUE problem on the seven-arc network shown in Figure \ref{FigTestnetworks}. Under the termination criteria \eqref{termination1}-\eqref{termination2}, the algorithms converge after 200 (PM), 3903 (SA), and 180 (PP) iterations. We use Figure \ref{FigSeven_conv} to show the left hand sides of \eqref{termination1} and \eqref{termination2} at each iteration of the algorithms. It can be seen that although all three algorithms reach the termination threshold within finite number of iterations, their convergence speeds vary.  In particular, the self-adaptive projection method has a much slower yet more smooth convergence than the other two; and its relative gap is monotonically decreasing as the iteration continues. The projection method and the proximal point method show qualitatively similar convergence trends: their relative gaps decrease much more quickly than the self-adaptive method but may have some local increase and oscillation.

\begin{figure}[h!]
\centering
\includegraphics[width=\textwidth]{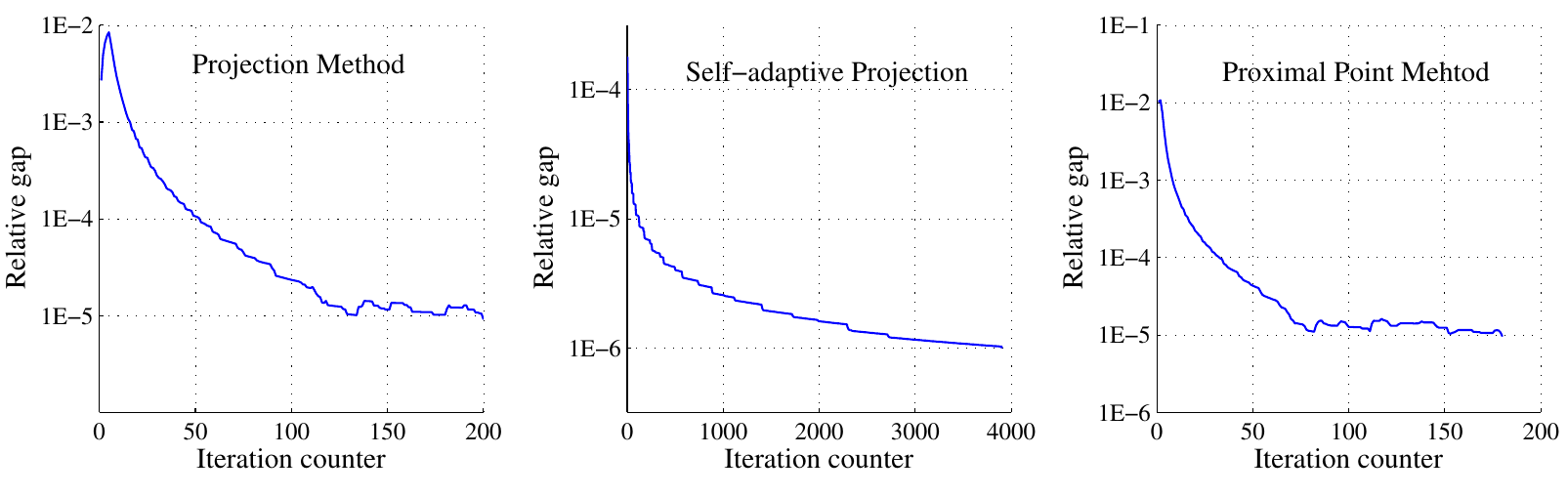}
\caption{The seven-arc network: Relative gaps, defined in \eqref{termination1}-\eqref{termination2}, at each iteration of the algorithms. Here, 1E-x means $10^{-x}$.}
\label{FigSeven_conv}
\end{figure}

We now turn to the solutions produces by these algorithms. Figure \ref{FigSeven_Qnu} illustrates two main quantities: the elastic demand $Q^k$ generated at each iteration, and the drivers' average travel cost at each iteration defined as: 
$$
v^k~\doteq~{1\over Q^k} \sum_{p\in\mathcal{P}} \int_{t_0}^{t_f}\Psi_p(t,\,h^k)\cdot h_p^k(t)\,dt \qquad k~=~0,\,1,\, 2,\,\ldots
$$

\begin{figure}[h!]
\centering
\includegraphics[width=\textwidth]{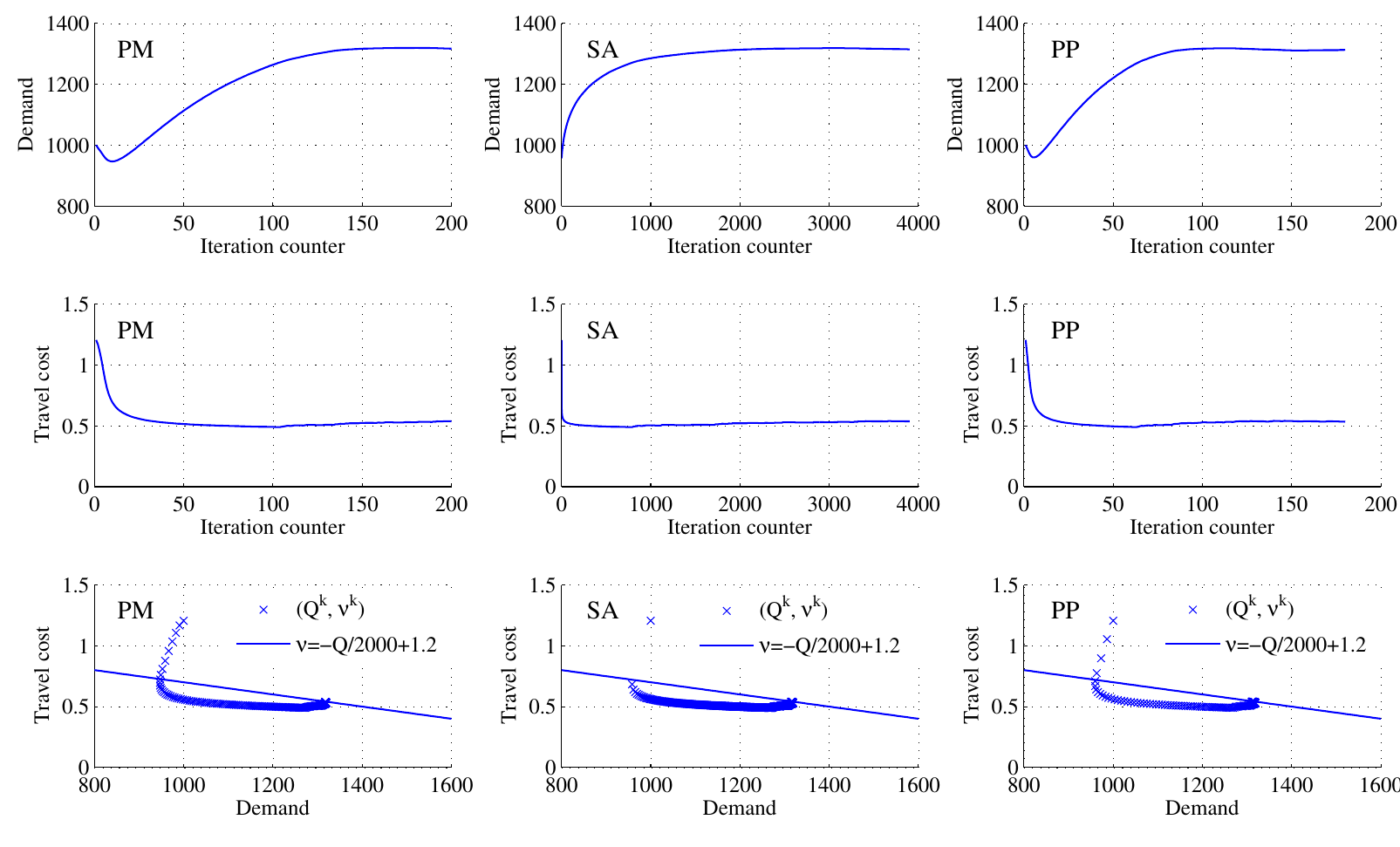}
\caption{The seven-arc network: The elastic demand $Q^k$ and the average travel cost $v^k$ at each iteration. The initial value for the demand is always set to be $Q^0=1000$. PM: projection method; SA: self-adaptive projection; PP: proximal point.}
\label{FigSeven_Qnu}
\end{figure}

\noindent The reason for using the average travel cost $v^k$ is as follows. Until an E-DUE has been found (as allowed by the prescribed tolerance), the travel costs within the same O-D pair are not equalized; thus we use the averaged travel cost to demonstrate the demand-cost relationship stipulated by the inverse demand function shown in Table \ref{networkinfo}. It can be seen from Figure \ref{FigSeven_Qnu} that both $Q^{k}$ and $v^k$ converge to a fixed value when the algorithms terminate, and they converge to a state that is consistent with the inverse demand function (see the bottom row of Figure \ref{FigSeven_Qnu}).

It remains to show that the experienced travel costs within the same O-D pair for all route and departure time choices are equal and minimum. This is illustrated in Figure \ref{FigSeven_fE}, where we show the path departure rates and the corresponding travel costs for all three paths in the network. We see that the travel costs are indeed equal and minimum whenever the departure rates are positive, which confirm the equilibrium state. Nevertheless, the equal travel costs claimed above are not exact if one inspects the pictures closely, and we use Remark \ref{numericalremark} below to justify our results.

\begin{remark}\label{numericalremark}
When it comes to computation of DUEs numerically, two facts need to be realized. First, theories regarding the continuous-time formulations of E-DUE and the delay operator are exact only when the time step employed by the numerical computation is infinitely small, which is of course impossible. From this perspective, all computational results reported so far, not merely in this paper, are approximations of the DUE or E-DUE. Secondly, most computational algorithms, even when their convergence criteria are met, can only converge in the limit of an infinite number of iterations. In other words, their convergence is in the asymptotic sense. It is safe to say that all computations of DUE or E-DUE are terminated after a finite number of iterations, and thus the solutions are, again, approximations as allowed by prescribed tolerances.
\end{remark}

\begin{figure}[h!]
\centering
\includegraphics[width=\textwidth]{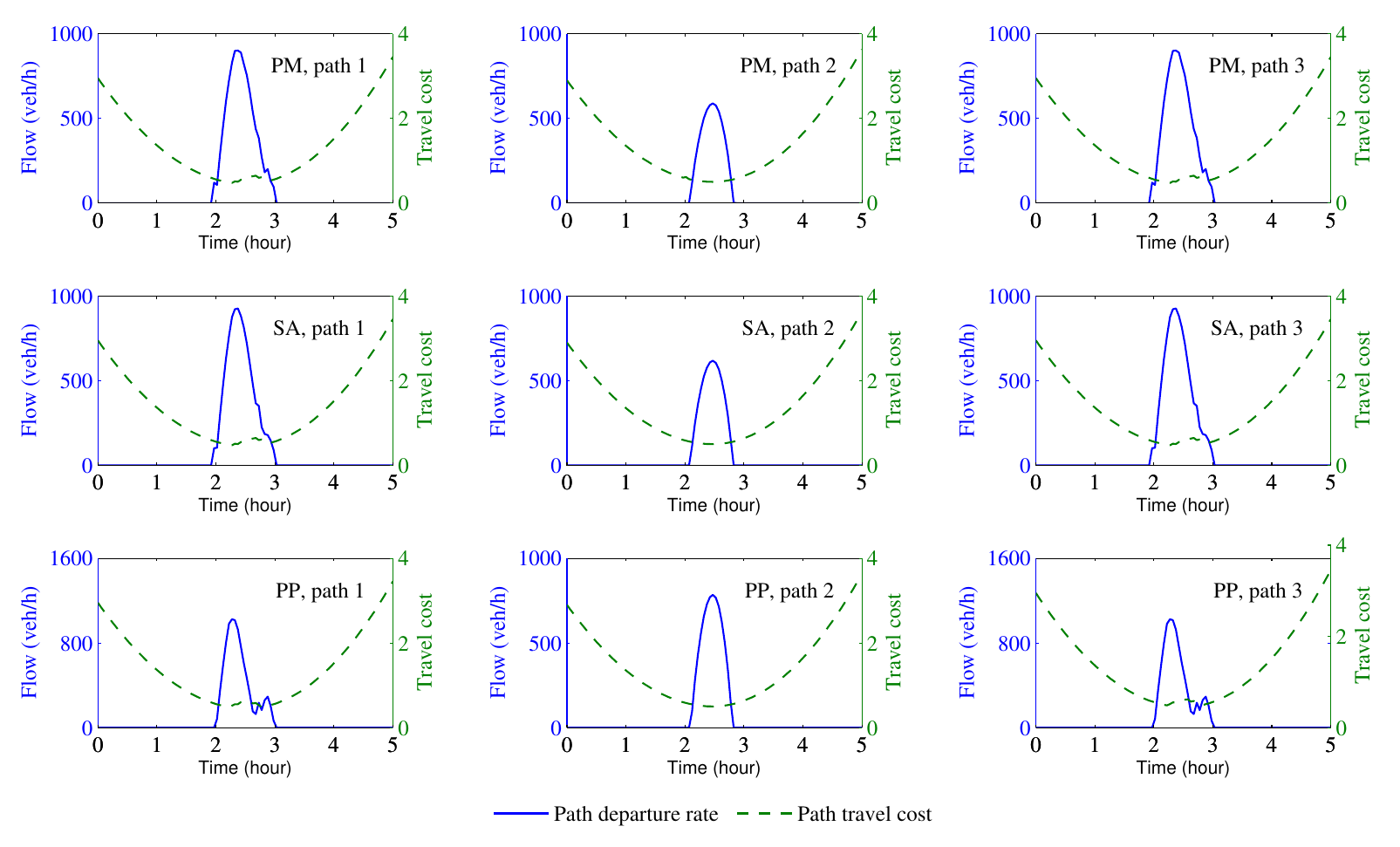}
\caption{The seven-arc network: Path departure rates and associated travel costs for all three paths. PM: projection method; SA: self-adaptive projection; PP: proximal point.}
\label{FigSeven_fE}
\end{figure}

\subsection{Computational results on the Sioux Falls network}

We test the three proposed algorithms further on the larger Sioux Falls network (see Figure \ref{FigTestnetworks}). We start with the algorithm convergence following the same criteria \eqref{termination1}-\eqref{termination2}. Figure \ref{FigSF_conv} shows the relative gap at each iteration of the algorithm. The number of iterations needed for the three algorithms are: 469 (PM), 2304 (SA), and 109 (PP). Observations regarding the convergence trends are similar to the previous test network: The self-adaptive method has a slower yet more smooth convergence rate, and the relative gap is monotonically decreasing. The projection method and the proximal point method have a faster convergence but their relative gaps may have some local oscillations.

\begin{figure}[h!]
\centering
\includegraphics[width=\textwidth]{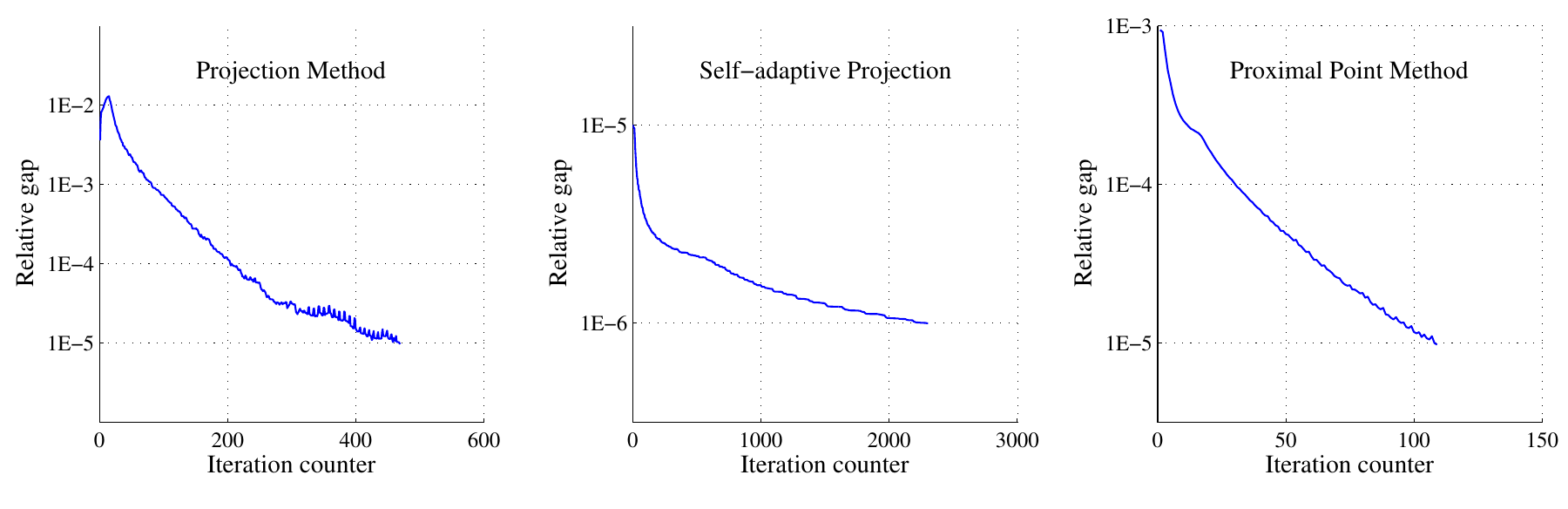}
\caption{The Sioux Falls network: Relative gaps, defined in \eqref{termination1}-\eqref{termination2}, at each iteration of the algorithms. Here, 1E-x means $10^{-x}$.}
\label{FigSF_conv}
\end{figure}

The convergence of the elastic travel demands $Q_{ij}^k$ and the OD-specific average travel costs $v_{ij}^k$ is demonstrated in Figure \ref{FigSF_Qnu}, which has a similar style of presentation as Figure \ref{FigSeven_Qnu} but shows information for all six O-D pairs. We can, again, confirm that $Q^{k}_{ij}$ and $v_{ij}^k$ converge to the relationship stipulated by the inverse demand function for all O-D pairs when the algorithms converge. However, there are some inaccuracies in the convergence of the self-adaptive method and the proximal point method as the trajectories of $(Q^k_{ij},\,v_{ij}^k)$ did not reach the line $v=-Q/500+1.6$ when the algorithms terminated. They are, again, related to the observations made in Remark \ref{numericalremark}, and may be overcome by tighter stopping tolerances.

\begin{figure}[h!]
\centering
\includegraphics[width=\textwidth]{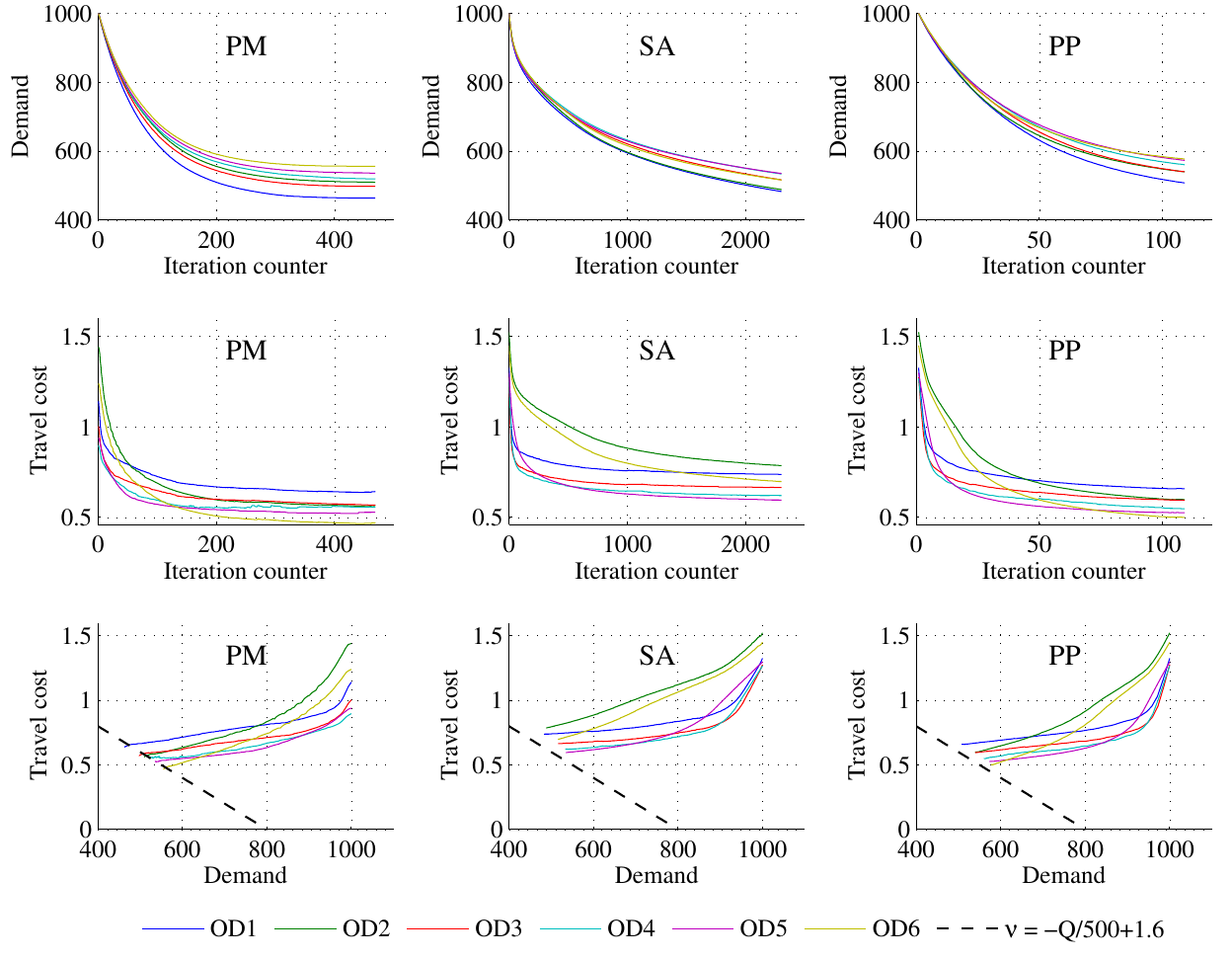}
\caption{The Sioux Falls network: The elastic demands $Q_{ij}^k$ and the average travel costs $v^k_{ij}$ for each O-D pair $(i,\,j)$ at each iteration. The initial values for the demands are always set to be $Q_{ij}^0=1000$ for all $(i,\,j)\in\mathcal{W}$. PM: projection method; SA: self-adaptive projection; PP: proximal point.}
\label{FigSF_Qnu}
\end{figure}

\subsection{Summary of algorithm performances}

We summarize the performances of the three algorithms in terms of solution quality, convergence rate, computational complexity, and computational time. Table \ref{tabperformance} shows some basic information on their computational performances.

\begin{table}[h!]
\begin{center}

\begin{tabular}{c|c|c|c|c|c|c|}\cline{2-7}
 & \multicolumn{3}{|c|}{ Seven-arc}& \multicolumn{3}{|c|}{Sioux Falls}
\\
\cline{2-7}
          & Iteration \# & DNL \# & Time  & Iteration \# & DNL \# & Time
\\
\hline
    \multicolumn{1}{|c|}{Projection Method}  & 200 & 200  & 3 s            & 469 & 469 & 382 s
 \\
\hline

  \multicolumn{1}{|c|}{Self-Adaptive}    & 3903 &  7805  & 108 s            & 2304 & 4,607  & 3810 s
 \\
\hline 
  \multicolumn{1}{|c|}{Proximal Point}      & 180 &  5400  & 74 s            & 109 & 2332  & 2425 s
 \\
\hline
\end{tabular}
\end{center}
\caption{Summary of the three algorithms.}
\label{tabperformance}
\end{table}

In general, the self-adaptive projection method takes much more iterations to reach a prescribed convergence threshold than the other two methods. However, one should take into account the fact that the self-adaptive method utilizes a different termination criterion than the other two; i.e. it relies on the residual function $r(X^k;\,\beta^k)$ while the others focus on the distance between two consecutive iterates (see \eqref{termination1} and \eqref{termination2}). 

The self-adaptive method and the proximal point method require multiple dynamic network loading (DNL) procedures to be performed within a single iteration, thus end up with more DNL numbers than the projection method. In our computation, the average numbers of DNLs performed within one iteration are roughly 2 (self-adaptive) and 26 (proximal point). The proximal point method requires solving a regularized VI within each iteration and thus demands substantially more computational effort.

The projection method requires the least computational effort and yields good solution quality despite its restrictive convergence condition (based on strong monotonicity). The self-adaptive method and the proximal point method require more iterations and DNL procedures to converge, which highlights a trade-off between theoretical convergence and computational complexity. Overall, all three algorithms have successfully computed the E-DUE solutions with convergence as allowed by a prescribed stopping threshold, despite the fact that their convergence conditions have not been verified by the network performance model.

\section{Conclusion}\label{secconclusion}
This paper address three aspects of the simultaneous route-and-departure-time dynamic user equilibrium (SRDT DUE): formulation, existence and computation. This problem is first formulated as a variational inequality problem in an extended Hilbert space. A general existence theory for the continuous-time E-DUE problem is then proposed  based on the new VI formulation. This existence proof employs the most general constraints relating path departure rates to a table of elastic trip volumes, and does not invoke the {\it a priori} upper bounds on the path departure rates.  Finally, we present three new computational algorithms: the projection algorithm, the self-adaptive projection algorithm, and the proximal point method. The first and second methods require a minimum-norm projection onto the extended Hilbert space, which can be explicitly instantiated using the differential variational inequality (DVI) formalism.  Convergence proofs are provided for all three algorithms with different types of generalized monotonicity, namely the mixed strongly-weakly monotonicity, the pseudo monotonicity, and the semistrictly quasi monotonicity. These three algorithms are tested on two networks with their solution quality, convergence, and computational efficiency shown and compared. The numerical results show a clear trade-off between the generality of theoretical convergence and the computational efficiency of the algorithms. In particular, the projection method, despite its restrictive convergence condition, tends to have a faster convergence rate than the other two methods.

It should be noted that the relaxed notions of monotonicity employed by our convergence proofs are not verified against the network performance model, thus the computational methods proposed here should be considered heuristics. Presently known mathematics does not provide a means of
classifying path delay operators like those intrinsic to our formulation of
E-DUE. Rather a \underline{new} class of operators must be discovered. The
first two steps in that process of discovery are: (i) the illustration of
what can be understood about convergence based on current knowledge; and
(ii) the heuristic application of known algorithms when convergence cannot
be rigorously assured. Again, it is our formulations that allow us to offer
results directly relevant to the aforementioned two steps.

\section{Acknowledgement} 
The comments of Professor Alberto Bressan (Penn State University) are gratefully acknowledged.


 \appendix

\section{Proof of Theorem \ref{vielasticthm}}\label{secappVIthm}

\begin{proof}
{\bf [Necessity].} Given a DUE solution with elastic demand $(h^*,\,Q^*)\in\widetilde\Lambda$, we easily deduce from \eqref{chapVI:eqn1} and \eqref{chapVI:eqn2} that for any $(h,\,Q)\in\widetilde\Lambda$, 
\begin{align}
&\sum_{p\in \mathcal{P}}\int\nolimits_{t_{0}}^{t_{f}}\Psi _{p}(t,h^*)(h_{p}(t)-h_{p}^{\ast }(t))dt
-\sum_{(i,\,j) \in \mathcal{W}}\Theta _{ij}\left[ Q^{\ast } \right] (Q_{ij} -Q_{ij}^{*}) \nonumber
\\
~=~& \sum_{(i,\,j)\in\mathcal{W}}\left(\sum_{p\in\mathcal{P}_{ij}}\int_{t_0}^{t_f}\Psi_p(t,\,h^*)h_p(t)\,dt-\Theta_{ij}[Q^*]\cdot Q_{ij}\right)\nonumber
\\
&~-~\sum_{(i,\,j)\in\mathcal{W}}\left(\sum_{p\in\mathcal{P}_{ij}}\int_{t_0}^{t_f}\Psi_p(t,\,h^*)h^*_p(t)\,dt-\Theta_{ij}[Q^*]\cdot Q^*_{ij}\right)\nonumber
\\
~=~&\sum_{(i,\,j)\in\mathcal{W}}\left(\sum_{p\in\mathcal{P}_{ij}}\int_{t_0}^{t_f}\Psi_p(t,\,h^*)h_p(t)\,dt-\Theta_{ij}[Q^*]\cdot Q_{ij}\right)\nonumber
\\
&~-~\sum_{(i,\,j)\in\mathcal{W}}\left(\sum_{p\in\mathcal{P}_{ij}}\Theta_{ij}[Q^*]\cdot\int_{t_0}^{t_f}h^*_p(t)\,dt- \Theta_{ij}[Q^*]\cdot Q^*_{ij}\right)\nonumber
\\
~=~& \sum_{(i,\,j)\in\mathcal{W}}\left(\sum_{p\in\mathcal{P}_{ij}}\int_{t_0}^{t_f}\Psi_p(t,\,h^*)h_p(t)\,dt-\Theta_{ij}[Q^*]\cdot Q_{ij}\right)\nonumber
\\
&~-~\sum_{(i,\,j)\in\mathcal{W}}\Theta_{ij}[Q^*]\cdot\left(\sum_{p\in\mathcal{P}_{ij}}\int_{t_0}^{t_f}h^*_p(t)\,dt- Q^*_{ij}\right)\nonumber
\\
~=~& \sum_{(i,\,j)\in\mathcal{W}}\left(\sum_{p\in\mathcal{P}_{ij}}\int_{t_0}^{t_f}\Psi_p(t,\,h^*)h_p(t)\,dt-\Theta_{ij}[Q^*]\cdot Q_{ij}\right)\label{chapVI:eqn3}
\end{align}
Observe that within \eqref{chapVI:eqn3}, we have, given any $(i,\,j)\in\mathcal{W}$, that
\begin{align}
&\sum_{p\in\mathcal{P}_{ij}}\int_{t_0}^{t_f}\Psi_p(t,\,h^*)h_p(t)\,dt-\Theta_{ij}[Q^*]\cdot Q_{ij}\nonumber\\
~\geq~& v_{ij}(h^*)\sum_{p\in\mathcal{P}_{ij}}\int_{t_0}^{t_f}h_p(t)\,dt-\Theta_{ij}[Q^*]\cdot Q_{ij}~=~v_{ij}(h^*)\left(\sum_{p\in\mathcal{P}_{ij}}\int_{t_0}^{t_f}h_p(t)\,dt-Q_{ij}  \right)~=~0 \label{chapVI:eqn4}
\end{align}
\noindent where $v_{ij}(h^*)$ is the essential infimum of the effective path delay within O-D $(i,\,j)$, and is equal to $\Theta_{ij}[Q^*]$ according to Definition \ref{dueelasticdef}.

As an immediate consequence of  \eqref{chapVI:eqn3} and \eqref{chapVI:eqn4}, the following inequality holds for all $(h,\,Q)\in\widetilde\Lambda$.
\begin{equation}\label{chapVI:eqn5}
\sum_{p\in \mathcal{P}}\int\nolimits_{t_{0}}^{t_{f}}\Psi _{p}(t,h^*)(h_{p}(t)-h_{p}^*(t))dt
-\sum_{(i,\,j) \in \mathcal{W}}\Theta _{ij}[Q^*] (Q_{ij} -Q_{ij}^*)~\geq~0
\end{equation}
\noindent which is the desired variational inequality. 

\vspace{0.2 in}

{\bf [Sufficiency].} Assume that \eqref{chapVI:eqn5} holds for any $(h,\,Q)\in\widetilde\Lambda$. Then $h^*$ must be a solution of the fixed-demand DUE problem with fixed demand  given by $Q^*$, as in this case the second term in the left hand side of \eqref{chapVI:eqn5} vanishes and we recover the well-known VI for the fixed demand case; see Theorem 2 of \cite{Friesz1993}. By definition of fixed-demand DUE,
 for any $(i,\,j)\in\mathcal{W}$,
\begin{equation}\label{fdduedef}
\begin{array}{c}
h_{p}^{*}(t) >0,~p\in \mathcal{P}_{ij} \Longrightarrow~~ \Psi _{p}(t,\,h^*) ~=~v_{ij}(h^*) \qquad \hbox{for almost every } t \in[t_0,\,t_f]
\\
\Psi_p(t,\,h^*)~\geq~v_{ij}(h^*) \qquad\forall p\in\mathcal{P}_{ij}\qquad \hbox{for almost every } t \in[t_0,\,t_f]
\end{array}
\end{equation}
In order to show that $(h^*,\,Q^*)$ is a DUE with elastic demand using Definition \ref{dueelasticdef}, we fix an arbitrary O-D pair $(k,\,l)\in\mathcal{W}$, and distinguish between two cases: 
\begin{itemize}
\item $Q_{kl}^*>0$.  We define the following pair $(\hat h,\,\hat Q)\in\widetilde\Lambda$: 
\begin{align*}
\hat h_p(t)~=~\begin{cases}
a\,h_p^*(t)\quad &p\in\mathcal{P}_{kl}\\ 
h_p^*(t) \quad & p\in\mathcal{P}\setminus \mathcal{P}_{kl}\end{cases}\quad\forall t\in[t_0,\,t_f], \qquad \hat Q_{ij}~=~
\begin{cases} 
a\,Q^*_{kl}\quad &(i,\,j)=(k,\,l)
\\
Q^*_{ij}\quad & (i,\,j)\in \mathcal{W}\setminus (k,\,l)\end{cases}
\end{align*}
\noindent where $a>0$ is an arbitrary positive parameter.  Substituting $(h,\, Q)$ for $(\hat h,\,\hat Q)$, the left hand side of  \eqref{chapVI:eqn5} becomes
\begin{align*}
&\sum_{p\in \mathcal{P}}\int\nolimits_{t_{0}}^{t_{f}}\Psi _{p}(t,h^*)(\hat h_{p}-h_{p}^{\ast })dt
-\sum_{(i,\,j) \in \mathcal{W}}\Theta _{ij}\left[ Q^{\ast } \right] (\hat Q_{ij} -Q_{ij}^*)
\\
~=~&\sum_{p\in\mathcal{P}_{kl}}\int_{t_0}^{t_f}\Psi_p(t,\,h^*)(ah_p^*-h_p^*)\,dt+\sum_{p\in\mathcal{P}\setminus\mathcal{P}_{kl}}\int_{t_0}^{t_f}\Psi_p(t,\,h^*)(h_p^*-h_p^*)\,dt
\\
&~-~\Theta_{kl}[Q^*]\left[aQ^*_{kl}-Q^*_{kl}\right]-\sum_{(i,\,j)\in\mathcal{W}\setminus (k,\,l)}\Theta_{ij}[Q^*](Q_{ij}^*-Q^*_{ij})
\\
~=~&(a-1)\sum_{p\in\mathcal{P}_{kl}}\int_{t_0}^{t_f}\Psi_p(t,\,h^*)\,h_p^*(t)\,dt-(a-1)\Theta_{kl}[Q^*]\,Q^*_{kl}
\\
~=~&(a-1) v_{kl}(h^*) Q^*_{kl}-(a-1)\Theta_{kl}[Q^*]Q^*_{kl}
\end{align*}
We conclude from \eqref{chapVI:eqn5} that 
$$
(a-1)\big(v_{kl}(h^*)-\Theta_{kl}[Q^*]\big)Q^*_{kl}~\geq~0
$$
Since $a\in\mathbb{R}_{++}$ is arbitrary and $Q^*_{kl}>0$, there must hold $v_{kl}(h^*)=\Theta_{kl}[Q^*]$.  Thus replacing $v_{kl}(h^*)$ with $\Theta_{kl}[Q^*]$ in \eqref{fdduedef} yields \eqref{chapVI:eqn1}-\eqref{chapVI:eqn2}.

\item $Q_{kl}^*=0$. In this case, we consider $(\hat h,\,\hat Q)\in\widetilde\Lambda$ such that $h_p(t)\equiv h_p^*(t)$ for all $t\in[t_0,\,t_f]$, $p\in\mathcal{P}\setminus\mathcal{P}_{kl}$. Substituting $(h,\,Q)$ for $(\hat h,\,\hat Q)$ in \eqref{chapVI:eqn5} yields
\begin{equation}
\sum_{p\in\mathcal{P}_{kl}}\int_{t_0}^{t_f}\Psi_p(t,\,h^*)(h_p(t)-h_p^*(t))dt - \Theta_{kl}[Q^*] Q_{kl}~\geq~0
\end{equation}
\noindent which implies that 
$$
\sum_{p\in\mathcal{P}_{kl}}\int_{t_0}^{t_f}\big(\Psi_p(t,\,h^*)-\Theta_{kl}[Q^*]\big) \, h_p(t)\,dt ~\geq~0
$$
\noindent Since the vector $(h_p(t):\,p\in\mathcal{P}_{kl},\,t\in[t_0,\,t_f])$ is arbitrary, we must have that $\Psi_p(t,\,h^*)\geq \Theta_{kl}[Q^*]$ for any $p\in\mathcal{P}_{kl}$ and almost every $t\in[t_0,\,t_f]$, which is \eqref{chapVI:eqn2}. Finally, \eqref{chapVI:eqn1} is trivially true since $h_p^*(t)\equiv 0$ for all $p\in\mathcal{P}_{kl}$ and almost every $t\in[t_0,\,t_f]$.
\end{itemize}

Combining the above two cases, we finish the proof.\end{proof}

\section{Proof of Theorem \ref{existencethm}}\label{secappexistencethm}

\begin{proof} The proof is completed in several steps. \\

\noindent {\bf Step 1.} Since Theorem \ref{browderthm}  cannot be directly applied to obtain a solution of the infinite-dimensional VI, let us instead employ the technique from \cite{existence} by considering finite-dimensional approximations of $\widetilde\Lambda$. More specifically, consider, for each $n\geq 1$, the uniform partition of $[t_0,\,t_f]$ into $n$ sub-intervals $I^1,\ldots, I^n$. Define the finite-dimensional subset of $\widetilde\Lambda$:
\begin{equation}
\widetilde\Lambda_n~\doteq~\Bigg\{(h,\,Q)\in \widetilde\Lambda: 
\\
 h_p(\cdot)~\hbox{is  constant on each } I^j\quad \forall 1\leq j\leq n,\quad \forall p\in\mathcal{P}   \Bigg\}
\end{equation}
\noindent Clearly $\widetilde\Lambda_n\subset\widetilde\Lambda$, thus all the assumptions regarding $\mathcal{F}$ or $\Psi$ continue to hold on this smaller set.

It is not restrictive to assume that there is an upper bound on the elastic demand for each origin-destination pair. That is, there exists a vector $U=\big(U_{ij}\big)\in \mathbb{R}_{++}^{|\mathcal{W}|}$ such that 
$$
0~\leq~Q_{ij}~\leq~U_{ij} \qquad \forall (i,\,j)\in\mathcal{W},\qquad  \forall (h,\,Q)\in\widetilde\Lambda
$$
\noindent We claim that $\widetilde\Lambda_n$ defined as such is convex and compact in $\widetilde\Lambda$ for each $n\geq 1$. We begin with convexity. Let $\widehat X=(\widehat h,\,\widehat Q)$ and $\overline{X}=(\overline h,\,\overline{Q})$ be any two points in $\widetilde\Lambda_n$. Given any $\alpha\in(0,\,1)$, we have that
$$
\sum_{p\in\mathcal{P}_{ij}}\int_{t_0}^{t_f}\Big(\alpha\,\widehat h_p(t)\,dt+(1-\alpha) \, \overline h_p(t)\Big)\,dt~=~\alpha\, \widehat Q_{ij}+(1-\alpha)\, \overline Q_{ij}\qquad \forall (i,\,j)\in\mathcal{W}
$$
Moreover, $ \alpha\, \widetilde h_p(\cdot)+(1-\alpha)\,\overline h_p(\cdot)$ clearly remains constant on each sub-interval $I^j,\,j=1,\ldots, n$, $\forall p\in\mathcal{P}$. We thus conclude that $\alpha \widehat X+(1-\alpha)\overline X\in \widetilde \Lambda_n$.

Next, let us investigate compactness. Let us fix $n\geq 1$. In light of Proposition  \ref{propsition3}, it suffices to establish sequential compactness for $\widetilde\Lambda_n$. We consider an arbitrary infinite sequence $\big\{X^k\big\}_{k\geq 1}\subset \widetilde\Lambda_n$ where $X^k=\big(h^k,\,Q^k\big)$. For each $k\geq 1$ and $p\in\mathcal{P}$, let $\mu_p^k=(\mu_{p,j}^k:\, j=1,\ldots,n)\in \mathbb{R}_+^{n}$ be such that
$$
h^k_p(t)~\equiv~\mu_{p,j}^k \qquad \forall t\in I^j,\quad \forall j~=~1,\,\ldots,\,n
$$
\noindent We then define $\mu^k\in\mathbb{R}_+^{n\times |\mathcal{P}|}$ to be the concatenation of all vectors $\mu_p^k,\,p\in\mathcal{P}$. We also notice that the sequence $\{\mu^k\}_{k\geq 1}$ are uniformly bounded by the constant 
$$
\max_{(i,\,j)\in\mathcal{W}} U_{ij} \cdot { n\over t_f-t_0 }
$$
\noindent Thus by the Bolzano-Weierstrass theorem, there exists a convergent subsequence $\big\{\mu^{k'}\big\}_{k'\geq 1}$. It is immediately verifiable that the corresponding subsequence $\{h^{k'}\}_{k'\geq 1}$ converge uniformly on $[t_0,\,t_f]$ and also in the $L^2$ norm.  Moreover, by virtue of the uniform bounds $U_{ij},\,(i,\,j)\in\mathcal{W}$, there exists a further subsequence $\{Q^{k''}\}_{k''\geq 1}$ that converge according to the Bolzano-Weierstrass theorem. Thus, the subsequence $\big\{X^{k''}\big\}_{k''\geq 1}$ converges with respect to the norm $\|\cdot\|_E$. \\

\noindent {\bf Step 2.}  We fix $n\geq 1$ and consider $\widetilde\Lambda_n$. Due to the uniform upper bound on all the elastic demands, the departure rate vectors that are piecewise constant must be pointwise and uniformly bounded. Thus, {\bf (A3)} immediately implies that $\Psi$ is continuous when restricted to the set of piecewise constant path departure vectors. Combining this with the fact that $\Theta$ is continuous, we conclude that $\mathcal{F}$ defined in \eqref{calfdef} is continuous on $\widetilde\Lambda_n$. Given this and the convexity and compactness of $\widetilde\Lambda_n$ established earlier, Theorem \ref{browderthm} asserts that  there exists some $X^{n,*}=(h^{n,*},\,Q^{n,*})\in\widetilde\Lambda_n$ such that 
\begin{equation}\label{chapExistence:eqn2}
\left<\mathcal{F}\left(X^{n, *}\right),\,X^n-X^{n,*}\right>_E~\geq~0\qquad\forall X^n\in\widetilde\Lambda_n
\end{equation}

\noindent As a slight reformulation of  \eqref{chapExistence:eqn2}, we have
\begin{equation}\label{eqn1}
\sum_{p\in\mathcal{P}}\int_{t_0}^{t_f}\Psi_p\big(t,\,h^{n,*}\big) h_p^{n,*}(t)\,dt-\sum_{(i,\,j)\in\mathcal{W}}\Theta_{ij}[Q^{n,*}]Q_{ij}^{n,*} ~\leq~\sum_{p\in\mathcal{P}}\int_{t_0}^{t_f}\Psi_p\big(t,\,h^{n,*}\big) h_p^{n}(t)\,dt-\sum_{(i,\,j)\in\mathcal{W}}\Theta_{ij}[Q^{n,*}]Q_{ij}^n
\end{equation}
\noindent for all $\big(h^n,\,Q^n\big)\in\widetilde\Lambda_n$. In particular, for all $(h^n,\,Q^n)\in\widetilde\Lambda_n$ such that 
$$
\sum_{p\in\mathcal{P}_{ij}}\int_{t_0}^{t_f}h_p^n(t)\,dt~=~Q^{n,*}_{ij}\qquad\forall (i,\,j)\in\mathcal{W},
$$
\noindent inequality \eqref{eqn1} becomes 
\begin{equation}\label{eqn2}
\sum_{p\in\mathcal{P}}\int_{t_0}^{t_f}\Psi_p\big(t,\,h^{n,*}\big) h_p^{n,*}(t)\,dt~\leq~\sum_{p\in\mathcal{P}}\int_{t_0}^{t_f}\Psi_p\big(t,\,h^{n,*}\big) h_p^{n}(t)\,dt
\end{equation}
Recall that $h^{n,*}(\cdot)$ is piecewise constant, thus \eqref{eqn2} implies the following: 
\begin{equation}\label{eqn3}
h^{n,*}_p(t)~>~0, \quad t\in I^k ~\Longrightarrow~ \int_{I^k}\Psi_p(t,\,h^{n,*}) \,dt ~=~\min_{1\leq j\leq n} \int_{I^j}\Psi_p(t,\,h^{n,*}) \,dt
\end{equation}
\noindent for all $p\in\mathcal{P}_{ij}$, $(i,\,j)\in\mathcal{W}$. \\

\noindent {\bf Step 3.} We assert that there must exist a constant $\mathcal{M}>0$ such that 
\begin{equation}\label{eqn4}
h_p^{n,*}(t)~\leq~\mathcal{M}\qquad\forall t\in[t_0,\,t_f],\quad\forall p\in\mathcal{P},\quad\forall n\geq 1
\end{equation}
Indeed, given {\bf (A2)}, we are prompted to define the following constant
$$
M^{max}~\doteq~\max_{a\in\mathcal{A}} M_a~<~+\infty
$$
where $\mathcal{A}$ is the set of links in the network. Recalling the constant $\Delta$ from {\bf (A1)}, we set $\mathcal{M}$ such that
$$
\mathcal{M}~>~{3 M^{max}\over \Delta+1}
$$
\noindent We claim that \eqref{eqn4} holds for such $\mathcal{M}$.  Otherwise, if \eqref{eqn4} fails, there must exist some $m\geq 1,\, q\in\mathcal{P}$ and $1\leq j\leq m$ such that
$$
h_q^{m,*}(t)~\equiv~\lambda~>~\mathcal{M}\qquad t\in I^j
$$
Without losing generality, we assume that $j> 1$ and consider the interval $I^{j-1}$. By possibly modifying the value of the function $\Psi_q(\cdot,\,h^{m,*})$ at one point without changing the measure-theoretic nature of the problem, we obtain $t^*\in I^{j-1}$ such that
$$
\Psi_q\big(t^*,\,h^{m,*}\big)~=~\sup_{t\in I^{j-1}} \Psi_q\big(t,\,h^{m,*}\big)
$$
We denote by $\tau_q(t,\, h^{m,*})$ the time of arrival at destination of driver who departs at time $t$ along path $q$. According to the first-in-first-out (FIFO) principle, we deduce that $\forall t\in I^j$,
$$
(t-t^j)\lambda~\leq~\int_{t^*}^t h^{m,*}_q(t)\,dt~\leq~M^{max}\big(\tau_q(t,\,h^{m,*})-\tau_q(t^*,\,h^{m,*})\big)
$$
where $t^j$ is the left boundary of the interval $I^j$. We then have the following estimation:
\begin{align}
&\Psi_q(t,\,h^{m,*})-\Psi_q(t^*,\,h^{m,*})
\\
~=~&D_q(t,\,h^{m,*})+f\big(\tau_q(t,\,h^{m,*})- T_A\big)-D_q(t^*,\,h^{m,*})-f\big(\tau_q(t^*,\,h^{m,*})- T_A\big)   \nonumber
\\
~\geq~&\tau_q(t,\,h^{m,*})-\tau_q(t^*,\,h^{m,*})-(t-t^*)+\Delta\big(\tau_q(t,\,h^{m,*})-\tau_q(t^*,\,h^{m,*})\big)    \nonumber
\\
~=~&(\Delta+1)\big(\tau_q(t,\,h^{m,*})-\tau_q(t^*,\,h^{m,*})\big)-(t-t^*)    \nonumber
\\
\label{eqn5}
~\geq~&(\Delta+1){\lambda\over M^{max}}(t-t^j) -(t-t^*)\qquad \qquad \forall t\in I^j
\end{align}
\noindent Integrating \eqref{eqn5} with respect to $t$ over interval $I^j$ shows the following:
\begin{equation}\label{eqn6}
\int_{I^j}\Psi_q(t,\,h^{m,*})\,dt - (t^{j+1}-t^j)\Psi_q(t^*,\,h^{m,*})~\geq~{(t^{j+1}-t^j)^2\over 2}\cdot {(\Delta+1)\lambda \over M^{max}}+(t^{j+1}-t^j)\cdot\left(t^*-{t^j+t^{j+1}\over 2}\right)
\end{equation}
\noindent where $t^j,\,t^{j+1}$ are respectively the left and right boundary of $I^j$. Since $t^*\in I^{j-1}$, we have that 
$$
t^*-{t^j+t^{j+1}\over 2}~\geq~-{3\over 2}\big(t^{j+1}-t^j\big)
$$
With this observation, \eqref{eqn6} becomes
\begin{align*}
\int_{I^j}\Psi_q(t,\,h^{m,*})\,dt - (t^{j+1}-t^j)\Psi_q(t^*,\,h^{m,*})~\geq~&{(t^{j+1}-t^j)^2\over 2}\cdot {(\Delta+1)\lambda \over M^{max}}-{3\over 2}\big(t^{j+1}-t^j\big)^2
\\
~=~&{(t^{j+1}-t^j)^2\over 2}\left({(\Delta+1)\lambda\over M^{max}}-3\right)~>~0
\end{align*}
This implies
$$
\int_{I^j}\Psi_q(t,\,h^{m,*})\,dt~>~\int_{I^{j-1}}\Psi(t,\,h^{m,*})\,dt
$$
which yields contradiction to \eqref{eqn3}. This substantiates our claim. \\

\noindent {\bf Step 4.} With the point-wise uniform upper bound $\mathcal{M}$ on the path departure rate vectors $h^{n,*}$ for all $n\geq 1$, by taking a subsequence if necessary, one can assume the weak convergence $h^{n,*}\to h^*\in\big(L_+^2[t_0,\,t_f]\big)^{|\mathcal{P}|}$. Let 
$$
Q_{ij}^*~\doteq~\sum_{p\in\mathcal{P}_{ij}}\int_{t_0}^{t_f}h_p^*(t)\,dt \quad\forall (i,\,j)\in\mathcal{W},\qquad Q^*\doteq\big(Q_{ij}^*:\,(i,\,j)\in\mathcal{W}\big)
$$
\noindent We have that for any $(i,\,j)\in\mathcal{W}$,
$$
Q_{ij}^*~=~\sum_{p\in\mathcal{P}_{ij}}\int_{t_0}^{t_f}h^*_p(t)\,dt~=~\lim_{n \to \infty} \sum_{p\in\mathcal{P}_{ij}}\int_{t_0}^{t_f} h_p^{n,*}(t)\,dt~=~\lim_{n \to\infty} Q_{ij}^{n,*}
$$
\noindent Therefore, we have that $X^{n,*}\doteq (h^{n,*},\,Q^{n,*}) \to X^*\doteq (h^*,\,Q^*)$ weakly. According to {\bf (A3)}, we have that $\Psi_p(t,\,h^{n,*})\to \Psi_p(t,\,h^*)$ uniformly for all $t\in[t_0,\,t_f]$ and $p\in\mathcal{P}$, and thus $\mathcal{F}(X^{n,*}) \to \mathcal{F}(X^*)$ strongly.

Finally, we show that
$$
\left<\mathcal{F}(X^*),\,X-X^*\right>_E~\geq~0 \qquad\forall X\in\widetilde\Lambda
$$
\noindent Indeed, we consider any $X\in\widetilde\Lambda$, and choose a sequence $X^n\in\widetilde\Lambda_n$ such that $\|X^n-X\|_E \to 0$ as $n\to\infty$. For every $n$, by \eqref{chapExistence:eqn2} one has
\begin{equation}\label{Xnineq}
\left<\mathcal{F}\left(X^{n, *}\right),\,X^n-X^{n,*}\right>_E~\geq~0
\end{equation}
\noindent This leads to the following:
\begin{align}
&\left<\mathcal{F}(X^*),\, X\right>_E  - \left<\mathcal{F}(X^*),\, X^*\right>_E \nonumber
\\
~=~&\Big[ \left<\mathcal{F}(X^*),\,X\right>_E - \left<\mathcal{F}(X^{n, *}),\,X^n\right>_E \Big]  +   \left<\mathcal{F}(X^{n, *}),\,X^n-X^{n,*}\right>_E  \nonumber
\\
&+   \left<\mathcal{F}(X^{n,*})-\mathcal{F}(X^*),\, X^{n,*}\right>_E + \left<\mathcal{F}(X^*),\, X^{n,*}-X^*\right>_E  \nonumber
\\
\label{XBnineq}
~\geq~& \Big[ \left<\mathcal{F}(X^*),\,X\right>_E - \left<\mathcal{F}(X^{n, *}),\,X^n\right>_E \Big] + \left<\mathcal{F}(X^{n,*})-\mathcal{F}(X^*),\, X^{n,*}\right>_E + \left<\mathcal{F}(X^*),\, X^{n,*}-X^*\right>_E  \quad\forall n
\end{align}
\noindent By strong convergence $\mathcal{F}(X^{n,*})\to\mathcal{F}(X^*)$ and $X^n\to X$, the term in the square brackets in \eqref{XBnineq} converges to zero. The second term of \eqref{XBnineq} converges to zero due to 
$$
 \left<\mathcal{F}(X^{n,*})-\mathcal{F}(X^*),\, X^{n,*}\right>_E ~\leq~\left\|\mathcal{F}(X^{n,*})-\mathcal{F}(X^*)\right\|_E\cdot \left\|X^{n,*}\right\|_E
$$ 
\noindent and the fact that the first factor converges to zero by strong convergence, and the second factor is uniformly bounded for all $n$. Finally, the third term in \eqref{XBnineq} converges to zero by weak convergence.

Thus, taking the limit $n\to \infty$ in \eqref{XBnineq} yields
$$
\left<\mathcal{F}(X^*),\, X\right>_E  - \left<\mathcal{F}(X^*),\, X^*\right>_E~\geq~0
$$
\noindent Since $X$ is arbitrary, $X^*$ is a solution of the variational inequality.

 \end{proof}

\section{Proof of Theorem \ref{thmexplicitproj}}\label{secappthmexplicitproj}
\begin{proof} 
According to the definition of the minimum-norm projection, at the $k$-th iteration, we need to solve the following minimization problem:
\begin{equation}\label{chapAlg:eqn20}
X^{k+1}~=~\underset{X\in\widetilde\Lambda_1}{\hbox{argmin}}\left\{{1\over 2}\left\|X^k-\alpha\mathcal{F}(X^k)-X\right\|^2\right\},
\end{equation}
\noindent which is recognized as a linear-quadratic optimal control problem:
\begin{multline}\label{secappeqn1}
X^{k+1}=\Big(h^{k+1},\,y^{k+1}(t_f)\Big)~=~\underset{X=(h,\,y(t_f))}{\hbox{argmin}}\int_{t_0}^{t_f}{1\over 2}\sum_{(i,j)\in\mathcal{W}}\sum_{p\in\mathcal{P}_{ij}}\left[h^k_p(t)-\alpha \Psi_p(t,\,h^k)-h_p(t)\right]^2dt\\
+{1\over 2}\sum_{(i,j)\in\mathcal{W}}\left(y_{ij}^k(t_f)-\alpha\Theta_{ij}^-\big[y^k(t_f)\big] - y_{ij}(t_f)\right)^2
\end{multline}
subject to
\begin{align}
\label{chapAlg:eqn21}
{dy_{ij}(t)\over dt} ~=~&\sum_{p\in \mathcal{P}_{ij}}h_{p}\left( t\right) \qquad \forall \left( i,j\right) \in \mathcal{W} \\
\label{chapAlg:eqn22}
y_{ij}(t_{0}) ~=~& 0\qquad \qquad \forall \left( i,j\right) \in \mathcal{W}
\\
\label{chapAlg:eqn23}
h ~\geq~& 0 
\end{align}

\noindent The Hamiltonian for the above optimal control problem is 
$$
H~=~{1\over 2}\sum_{(i,j)\in\mathcal{W}}\sum_{p\in\mathcal{P}_{ij}}\left[h_p^k(t)-\alpha\Psi_p(t,\,h^k)-h_p(t)\right]^2 +\sum_{(i,j)\in\mathcal{W}}\lambda_{ij}(t)\sum_{p\in\mathcal{P}_{ij}}h_p(t),
$$
\noindent for which the adjoint equations are
\begin{equation}\label{chapAlg:constantlambda}
{d\lambda_{ij}(t)\over dt}~=~-{\partial H\over \partial y_{ij}}~=~0\qquad\forall (i,\,j)\in\mathcal{W},\quad \forall t\in[t_0,\,t_f];
\end{equation}
\noindent and the transversality conditions read: for all $(i,\,j)\in\mathcal{W}$,
\begin{equation}\label{etra}
\lambda_{ij}(t_f)~=~{1\over 2}{\partial \sum_{(k,l)\in\mathcal{W}}\left(y_{kl}^k(t_f)-\alpha\Theta_{kl}^-\big[y^k(t_f)\big]-y_{kl}(t_f)\right)^2\over \partial y_{ij}(t_f)}~=~-y_{ij}^k(t_f)+\alpha\Theta_{ij}^-\big[y^k(t_f)\big]+y_{ij}(t_f)
\end{equation}
\noindent According to the minimum principle, we enforce the following minimization problem
$$
\min_h H \quad \hbox{s.t.} \quad -h~\leq~0,
$$
\noindent for which the Kuhn-Tucker conditions are (we denote the K-T point by $h^{k+1}$)
\begin{align}
\label{kt1}
-\left[h_p^k(t)-\alpha \Psi_p(t,\,h^k)-h^{k+1}_p(t)\right]+\lambda_{ij}(t)&~=~\rho_p(t) \qquad\forall (i,\,j)\in\mathcal{W},\quad p\in\mathcal{P}_{ij},\quad\forall t\in[t_0,\,t_F]\\
\label{kt2}
\rho_p(t)\cdot h_p^{k+1}(t)&~=~0\qquad\forall (i,j)\in\mathcal{W},\quad p\in\mathcal{P}_{ij},\quad\forall t\in[t_0,\,t_F]\\
\label{kt3}
\rho_p(t)&~\geq~0\qquad \forall (i,j)\in\mathcal{W},\quad p\in\mathcal{P}_{ij},\quad\forall t\in[t_0,\,t_F]
\end{align}

\noindent Thus, the optimality conditions for system \eqref{secappeqn1}-\eqref{chapAlg:eqn23} are:
\begin{equation}\label{chapAlg:eqn235}
\lambda_{ij}(t)~\equiv~-y_{ij}^k(t_f)+\alpha\Theta^-_{ij}[y^k(t_f)]+y^{k+1}_{ij}(t_f)\qquad \forall (i,\,j)\in\mathcal{W}
\end{equation}
\begin{align}
0 \leq h^{k+1}_p(t)=&~\hbox{arg}\left\{{\partial H\over \partial h_p}=0\right\}=\hbox{arg}\left\{-\left(h_p^k(t)-\alpha\Psi_p(t,\,h^k)-h_p(t)\right)+\lambda_{ij}=0\right\}\nonumber
\\
\label{chapAlg:eqn24}
=&~\hbox{arg}\left\{-\left(h_p^k(t)-\alpha\Psi_p(t,\,h^k)-h_p(t)\right)-y_{ij}^k(t_f)+\alpha\Theta_{ij}^-[y^k(t_f)]+y_{ij}^{k+1}(t_f)=0\right\}
\end{align}
\noindent In other words, given $(i,\,j)\in\mathcal{W}$ and $p\in\mathcal{P}_{ij}$, $h_p^{k+1}(\cdot)$ is determined as 
\begin{align}
h_p^{k+1}(t)~=~& \left[h_p^k(t) -\alpha\Psi_p(t,\,h^k)+y_{ij}^k(t_f)-\alpha\Theta_{ij}^-[y^k(t_f)]-y_{ij}^{k+1}(t_f)\right]_+  \nonumber
\\
\label{chapAlg:eqn25}
~=~&\left[h_p^k(t)-\alpha\Psi_p(t,\,h^k)+Q_{ij}^k+\alpha\Theta_{ij}[Q^k]-Q_{ij}^{k+1}\right]_+
\end{align}
\noindent where 
$$
Q^k_{ij}~\equiv~y_{ij}^k(t_f)~=~\sum_{p\in\mathcal{P}_{ij}}\int_{t_0}^{t_f}h^k_p(t)\,dt\qquad \forall (i,\,j)\in\mathcal{W},\quad\forall k,
$$
\noindent and $[x]_+\doteq \max\{0,\,x\}$ for $x\in\mathbb{R}$. Notice that for all $(i,\,j)\in\mathcal{W}$,  $Q_{ij}^{k+1}$ must satisfy
\begin{equation}\label{chapAlg:eqn26}
Q_{ij}^{k+1}~=~\sum_{p\in\mathcal{P}_{ij}}\int_{t_0}^{t_f}h_p^{k+1}(t)dt ~=~\sum_{p\in\mathcal{P}_{ij}}\int_{t_0}^{t_f}\Big[h_p^k(t)-\alpha\Psi_p(t,\,h^k)+Q_{ij}^k+\alpha\Theta_{ij}\big[Q^k\big]-Q_{ij}^{k+1}\Big]_+ dt
\end{equation}

Finally, we show the existence and uniqueness of $Q_{ij}^{k+1}\in\mathbb{R}_+$ that satisfies \eqref{chapAlg:eqn26}. We slightly rewrite \eqref{chapAlg:eqn26} as 
\begin{equation}\label{chapAlg:eqn265}
\sum_{p\in\mathcal{P}_{ij}}\int_{t_0}^{t_f}\Big[h_p^k(t)-\alpha\Psi_p(t,\,h^k)+Q_{ij}^k+\alpha\Theta_{ij}\big[Q^k\big]-Q_{ij}^{k+1}\Big]_+ dt-Q_{ij}^{k+1}~=~0
\end{equation}

\noindent Two cases may arise:
\begin{itemize}
\item[(i)] $h_p^k(t)-\alpha\Psi_p(t,\,h^k)+Q_{ij}^k+\alpha\Theta_{ij}[Q^k]\leq 0$ for almost every $t \in[t_0,\,t_f]$ and all $p\in\mathcal{P}_{ij}$. Clearly $Q_{ij}^{k+1}=0$ is the only solution to \eqref{chapAlg:eqn265} in this case.

\item[(ii)] $h_p^k(t)-\alpha\Psi_p(t,\,h^k)+Q_{ij}^k+\alpha\Theta_{ij}[Q^k] > 0$ for some $p\in\mathcal{P}_{ij}$ and for $t\in\mathcal{B}\subset [t_0,\,t_f]$ where $\mathcal{B}$ is a set with  positive measure. We call the left hand side of \eqref{chapAlg:eqn265} $f(Q_{ij}^{k+1})$, which is a continuous function of $Q_{ij}^{k+1}$. According to the hypothesis, the following hold
$$
f(0)~>~0,\qquad\qquad f(Q_{ij}^{k+1})~<~0\quad\hbox{if}\quad Q_{ij}^{k+1}~\hbox{is very large}
$$
 Therefore, by the Intermediate Value Theorem, there must exist at least one value of  $Q_{ij}^{k+1}$ such that $f(Q_{ij}^{k+1})$ vanishes.   The uniqueness of such a solution follows by observing that $f(\cdot)$, as a function of $Q_{ij}^{k+1}$, is strictly decreasing. 
 \end{itemize}
 
 Therefore, $X^{k+1}=(h^{k+1},\,Q^{k+1})$ given by \eqref{chapAlg:eqn25} and \eqref{chapAlg:eqn26}  is unique.
\end{proof}

\section{Proof of Theorem \ref{convthmmswm}}\label{secappmswmthm}

\begin{proof}
Using the non-expansiveness property of the projection operator, we have that 
\begin{align}
&\left\|P_{\widetilde\Lambda}\big[X^{k+1}-\alpha\mathcal{F}(X^{k+1})\big] - P_{\widetilde\Lambda}\big[X^{k}-\alpha\mathcal{F}(X^{k})\big] \right\|_E^2 \nonumber
\\
~\leq~& \left\|X^{k+1}-\alpha\mathcal{F}(X^{k+1}) - X^k +\alpha\mathcal{F}(X^k)\right\|_E^2 \nonumber
\\
\label{MSWMproof1}
~=~&\left\|X^{k+1}-X^k\right\|_E^2 -2\alpha \left<\mathcal{F}(X^{k+1})-\mathcal{F}(X^k),\, X^{k+1}-X^k\right>_E+\alpha^2\left\|\mathcal{F}(X^{k+1})-\mathcal{F}(X^k)\right\|_E^2 
\end{align}
\noindent Recalling that $X^{k+1}=\big(h^{k+1},\,Q^{k+1}\big)$ and $X^{k}=\big(h^{k},\,Q^{k}\big)$, we deduce from the hypotheses that
\begin{align}
&\left<\mathcal{F}(X^{k+1})-\mathcal{F}(X^k),\, X^{k+1}-X^k\right>_E  \nonumber
\\
~=~&\left<\Psi(h^{k+1})-\Psi(h^k),\,h^{k+1}-h^k\right>+\left<\Theta^-[Q^{k+1}]-\Theta^-[Q^k],\,Q^{k+1}-Q^k\right>  \nonumber
\\
~=~&\sum_{p\in\mathcal{P}^{sm}}\left<\Psi_p(h^{k+1})-\Psi_p(h^k),\,h_p^{k+1}-h_p^k\right>+\sum_{q\in\mathcal{P}\setminus\mathcal{P}^{sm}}\left<\Psi_q(h^{k+1})-\Psi_q(h^k),\,h_q^{k+1}-h_q^k\right>  \nonumber
\\
~+~& \left<\Theta^-[Q^{k+1}]-\Theta^-[Q^k],\,Q^{k+1}-Q^k\right>  \nonumber
\\
~\geq~&K^{sm}\sum_{p\in\mathcal{P}^{sm}}\left\|h_p^{k+1}-h_p^k\right\|^2-K^{wm}\sum_{q\in\mathcal{P}\setminus \mathcal{P}^{sm}}\left\|h_q^{k+1}-h_q^k\right\|^2+K_2\left\|Q^{k+1}-Q^k\right\|^2  \nonumber
\\
~\geq~&K^{sm}\sum_{p\in\mathcal{P}^{sm}}\left\|h_p^{k+1}-h_p^k\right\|^2-K^{wm}M\sum_{p\in\mathcal{P}^{sm}}\left\|h_p^{k+1}-h_p^k\right\|^2 + K_2\left\|Q^{k+1}-Q^k\right\|^2  \nonumber
\\
~=~&(K^{sm}-K^{wm}M)\sum_{p\in\mathcal{P}^{sm}}\left\|h_p^{k+1}-h_p^k\right\|^2+K_2\left\|Q^{k+1}-Q^k\right\|^2 \nonumber
\\
\label{MSWMproof2}
~\geq~&{K^{sm}-K^{wm}M\over M+1}\left\|h^{k+1}-h^k\right\|^2_{L^2}+K_2\left\|Q^{k+1}-Q^k\right\|^2
\end{align}

\noindent Combining \eqref{MSWMproof1} and \eqref{MSWMproof2} yields
\begin{align*}
&\left\|P_{\widetilde\Lambda}\big[X^{k+1}-\alpha\mathcal{F}(X^{k+1})\big] - P_{\widetilde\Lambda}\big[X^{k}-\alpha\mathcal{F}(X^{k})\big] \right\|_E^2
\\
~\leq~& \left\|X^{k+1}-X^k\right\|_E^2  -  2\alpha\left({K^{sm}-K^{wm}M\over M+1}\left\|h^{k+1}-h^k\right\|^2_{L^2}+K_2\left\|Q^{k+1}-Q^k\right\|^2 \right)
\\
~+~&\alpha^2\Bigg( \left\|\Psi(h^{k+1})-\Psi(h^{k})\right\|_{L^2}^2 + \left\|\Theta^-[Q^{k+1}]-\Theta^-[Q^k]\right\|^2\Bigg)
\\
~\leq~& \left\|X^{k+1}-X^k\right\|_E^2-  2\alpha\left({K^{sm}-K^{wm}M\over M+1}\left\|h^{k+1}-h^k\right\|^2_{L^2}+K_2\left\|Q^{k+1}-Q^k\right\|^2 \right)
\\
~+~&\alpha^2\Bigg( L_1^2\left\|h^{k+1}-h^k\right\|_{L^2}^2+L_2^2\left\|Q^{k+1}-Q^k\right\|^2\Bigg)
\\
~\leq~& \left\|X^{k+1}-X^k\right\|_E^2-2\alpha\min\left\{{K^{sm}-K^{wm}M\over M+1},\, K_2\right\}\left\|X^{k+1}-X^k\right\|_E^2 +\alpha^2\max\left\{L_1^2,\,L_2^2\right\}\left\|X^{k+1}-X^k\right\|_E^2
\\
~=~&\left(1-2\alpha\min\left\{{K^{sm}-K^{wm}M\over M+1},\, K_2\right\}+\alpha^2\max\left\{L_1^2,\,L_2^2\right\}\right)\cdot \left\|X^{k+1}-X^k\right\|_E^2
\end{align*}
Thus, by setting 
$$
1-2\alpha\min\left\{{K^{sm}-K^{wm}M\over M+1},\, K_2\right\}+\alpha^2\max\left\{L_1^2,\,L_2^2\right\}~<~1
$$
\noindent that is,
\begin{equation}\label{MSWMproof3}
\alpha~<~{2\min\left\{{K^{sm}-K^{wm}M\over M+1},\, K_2\right\}\over \max\left\{L_1^2,\,L_2^2\right\}}
\end{equation}
\noindent we obtain the contracting property of the map $X^k\mapsto P_{\widetilde\Lambda}\big[X^k-\alpha\mathcal{F}(X^k)\big]$. The convergence follows from the contracting mapping theorem \citep{Rudin}.
\end{proof}

\section{Proof of Lemma \ref{ppmconvthm}}\label{secappppm}

\begin{proof}
Let $X^d\in Y^d$. We have that for any $k\geq 0$,
\begin{equation}\label{Liueqn2}
\left\|X^{k}-X^d\right\|_E^2~=~\left\|X^{k+1}-X^k + X^d -X^{k+1}\right\|_E^2~=~\left\|X^{k+1}-X^{k}\right\|_E^2+ \left\|X^{d}-X^{k+1}\right\|_E^2 + 2\left<X^{k+1}-X^k~,~X^{d}-X^{k+1}\right>_E
\end{equation}
\noindent Taking $X=X^d$ in \eqref{ppmvi}, and combining this with \eqref{minty} yields
\begin{equation}\label{Liueqn3}
0~\geq~\left<\mathcal{F}(X^{k+1}),\,X^d-X^{k+1}\right>_E~\geq~ - a\left<X^{k+1}-X^k,\, X^d-X^{k+1}\right>_E
\end{equation}
\eqref{Liueqn2} and \eqref{Liueqn3} together yield that for all $k\geq 0$,
\begin{equation}\label{Liueqn4}
\left\|X^{k+1}-X^k\right\|^2_E~\leq~\left\|X^{k}-X^d\right\|_E^2 -\left\|X^{k+1}-X^{d}\right\|_E^2 
\end{equation}
\noindent Summing up \eqref{Liueqn4} over different values of $k$ yields
\begin{equation}\label{Liueqn5}
(k+1) \min_{0\leq i\leq k} \left\|X^{i+1}-X^i\right\|_E^2~\leq~\sum_{i=0}^k\left\|X^{i+1}-X^i\right\|_E^2~\leq~\left\|X^0-X^d\right\|_E^2- \left\|X^{k+1}-X^d\right\|_E^2
\end{equation}

\noindent For each $k\geq 0$, we introduce the notation $\mu(k)\doteq \underset{0\leq i\leq k}{\text{argmin}}\,\left\|X^{i+1} -X^{i}\right\|_E^2$. Then \eqref{Liueqn5} implies
$$
(k+1)\left\|X^{\mu(k)+1} - X^{\mu(k)}\right\|_E^2~\leq~\left\|X^0 - X^d\right\|_E^2 ~\leq~D^2
$$
\noindent and thus
\begin{equation}\label{Liueqn6}
\left\|X^{\mu(k)+1} - X^{\mu(k)}\right\|_E^2~\leq~{1\over k+1}D^2
\end{equation}
\noindent where $D$ denotes the diameter of the feasible set $\widetilde\Lambda$.  By invoking \eqref{ppmvi}, we have 
$$
\left<\mathcal{F}(X^{\mu(k)+1}+a (X^{\mu(k)+1}- X^{\mu(k)})~,~X-X^{\mu(k)+1}\right>_E~\geq~0\qquad\forall X\in\widetilde\Lambda,
$$
\noindent which immediately leads to 
\begin{align}
\left<\mathcal{F}(X^{\mu(k)+1})~,~X-X^{\mu(k)+1}\right>_E~\geq~& -a\left<X^{\mu(k)+1}-X^{\mu(k)}~,~X-X^{\mu(k)+1}\right>_E   \nonumber
\\
~\geq~& -a \left\|X^{\mu(k)+1}-X^{\mu(k)}\right\|_E\cdot \left\|X-X^{\mu(k)+1}\right\|_E \nonumber
\\
\label{Liueqn7}
~\geq~ & -a \left\|X^{\mu(k)+1}-X^{\mu(k)}\right\|_E \cdot D 
\\
~\geq~& -a \sqrt{{1\over k+1} D^2}\cdot D \qquad\forall X\in\widetilde\Lambda \nonumber
\end{align}
\end{proof}

\bibliographystyle{model2-names}
\bibliography{<your-bib-database>}



\end{document}